\newcommand{\ZZ}{\mathbb{Z}}
\newcommand{\RR}{\mathbb{R}}
\newcommand{\teich}{\mathcal T}
\newcommand{\D}{\mathcal D}
\newcommand{\id}{\mathrm{id}}
\newcommand{\union}{\cup}
\newcommand{\Map}{\mathrm{Map}}
\newcommand{\Out}{\mathrm{Out}}
\theoremstyle{plain}
\newtheorem{theorem}{Theorem}[section]
\newtheorem*{theorem*}{Theorem}
\newtheorem{lemma}[theorem]{Lemma}
\newtheorem{prop}[theorem]{Proposition}
\newtheorem{cor}[theorem]{Corollary}
\theoremstyle{definition}
\newtheorem{defi}[theorem]{Definition}
\theoremstyle{remark}
\newtheorem{remark}[theorem]{Remark}
\begin{document}
\title{The geometry of the handlebody groups I: Distortion}
\author{Ursula Hamenst\"adt and Sebastian Hensel}
\date{April 11, 2011}
\thanks{AMS subject classification: 57M99, 20E36.\\  
  Both authors are partially
  supported by the Hausdorff Center Bonn and the Hausdorff Institut
  Bonn. The second author is supported by the Max-Planck Institut
  f\"ur Mathematik Bonn}
\begin{abstract}
  We show that the mapping class group of a handlebody $V$ of genus at
  least $2$ (with any number of marked points or spots) is 
  exponentially distorted in the mapping class group of its boundary
  surface $\partial V$.
  The same holds true for solid tori $V$ with at least two marked
  points or spots. 
\end{abstract}
\address{\hskip-\parindent
  Ursula Hamenst\"adt and Sebastian Hensel\\ Mathematisches Institut der Universit\"at Bonn \\
  Endenicher Allee 60\\
  D-53115 Bonn, Germany}
\email{ursula@math.uni-bonn.de}
\email{loplop@math.uni-bonn.de}
\maketitle

\section{Introduction}
A handlebody $V_g$ of genus $g$ is a $3$-manifold bounded by a closed
orientable surface $\partial V_g=S_g$ of genus $g$. Explicitly, $V_g$ can be
constructed by attaching $g$ one-handles to a $3$-ball.
Handlebodies are
basic building blocks for closed $3$-manifolds, since any such manifold
can be obtained by gluing two handlebodies along their boundaries.

The handlebody group $\Map(V_g)$ is the subgroup of the mapping 
class group $\Map(\partial V_g)$ of the boundary surface defined by isotopy
classes of those orientation preserving homeomorphisms of $\partial V_g$
which can be extended to homeomorphisms of $V_g$. 
It turns out that $\Map(V_g)$ can be identified with the group of
orientation preserving homeomorphisms of $V_g$ up to isotopy.

The handlebody group is a finitely presented 
subgroup of the mapping class group (compare \cite{Wa98} and \cite{S77}),
and hence it can be equipped with a word norm.
The goal of this article is to initiate an investigation of the coarse geometry
of the handlebody group induced by this word norm. 

The geometry of mapping class groups of surfaces is quite well
understood. Therefore, understanding the geometry of the inclusion homomorphism
$\Map(V_g)\to \Map(\partial V_g)$
may allow to deduce geometric properties of the handlebody group
from geometric properties of the mapping class group.
This task would be particularly easy if the handlebody group was
undistorted in the ambient mapping class group (i.e. if the
inclusion was a quasi-isometric embedding).

Many natural subgroups of the mapping class group are known to be
undistorted. One example is given by groups generated by Dehn twists about
disjoint curves (studied by Farb, Lubotzky and Minsky in \cite{FLM01})
where undistortion can be proved by considering the subsurface
projections onto annuli around the core curves of the Dehn twists.

Another example of undistorted subgroups are mapping class groups of
subsurfaces (compare \cite{MM00} or \cite{H09}).
In this case, the proof of undistortion relies on the 
construction of quasi-geodesics in the mapping class group -- 
either train track splitting
sequences as in \cite{H09} or hierarchy paths
defined by Masur and Minsky in \cite{MM00}. 

Other important subgroups of the mapping class group are known to be
distorted. As one example we mention the Torelli group, which is
exponentially distorted by \cite{BFP07}. A finitely generated subgroup
$H$ of a finitely generated group $G$ is called
exponentially distorted in $G$ if the following holds. On the one
hand, the word norm in $H$ of every element $h\in H$ is coarsely
bounded from above by an exponential of the word norm of $h$ in $G$. On the
other hand, there is a sequence of elements $h_i\in H$ such that the word
norm of $h_i$ in $G$ grows linearly, while the word norm of $h_i$ in
$H$ grows exponentially.

The argument from \cite{BFP07} can be used to show exponential
distortion for other normal
subgroups of the mapping class group as well.
Since the handlebody group is not normal, it cannot be used to analyze
the handlebody group. 

Answering a question raised in \cite{BFP07}, we show that
nevertheless the same conclusion holds true  
for handlebody groups in almost all cases.
\begin{theorem*}
  The handlebody group for genus $g\geq 2$ is exponentially
  distorted in the mapping class group.
\end{theorem*}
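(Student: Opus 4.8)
The plan is to prove the two inequalities in the definition of exponential distortion separately. Fix once and for all a \emph{cut system} for $V$, that is, a collection of $g$ disjoint essential disks cutting $V$ into a ball, enriched by enough auxiliary disks and arcs that the resulting configuration $R_0$ has finite stabilizer in $\Map(V)$. Since $\Map(V)$ acts cocompactly with finite vertex stabilizers on the graph of such configurations, $\|h\|_{\Map(V)}$ is comparable to the combinatorial distance from $R_0$ to $h(R_0)$ in that graph.

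For the upper bound, take $h\in\Map(V)$ and write it as a word of length $n=\|h\|_{\Map(\partial V)}$ in a fixed generating set of $\Map(\partial V)$. A single mapping class multiplies geometric intersection numbers by at most a bounded factor, so $i(h(R_0),R_0)\le C^{n}$. The second ingredient is a surgery estimate inside $V$: two such configurations whose geometric intersection number is $N$ can be carried one onto the other by a product of at most $\mathrm{poly}(N)$ generators of $\Map(V)$, by removing intersections one innermost bigon or arc at a time and realizing each reduction by boundedly many handlebody generators (staying inside $V$ is essential but costs nothing). Combining, $\|h\|_{\Map(V)}\le\mathrm{poly}\bigl(i(h(R_0),R_0)\bigr)\le C'^{\,n}$, which is the asserted exponential bound; this is the ``easy'' direction.

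For the lower bound one must exhibit elements $h_k\in\Map(V)$ with $\|h_k\|_{\Map(\partial V)}$ linear in $k$ but $\|h_k\|_{\Map(V)}$ exponential in $k$. I would take $h_k=T_{\partial E_k}$, a Dehn twist about the boundary of a carefully chosen meridian disk $E_k\subset V$ (or, in the solid-torus-with-two-spots model, the analogous disk-dragging element). The family $E_k$ must be built so that: (i) the curve $\partial E_k$ has complexity growing like $\lambda^{k}$ for some $\lambda>1$ while all of its subsurface projections to a fixed marking of $\partial V$ stay controlled, whence $\|T_{\partial E_k}\|_{\Map(\partial V)}\asymp k$; and (ii) no product of fewer than $e^{ck}$ handlebody generators carries $E_k$ to a standard disk. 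For (ii) the aim is to produce a quantity that is coarsely Lipschitz for the word metric on $\Map(V)$ and can be estimated geometrically --- a subsurface-projection--type coefficient of $\partial E_k$ supported on a region of $\partial V$ that is visible only through the handlebody structure (for instance an $I$-bundle piece cut off by a separating meridian) --- and to show that this coefficient grows exponentially in $k$ even though the honest curve-complex displacement of $\partial E_k$ grows only linearly.

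The main obstacle is exactly (ii), made delicate by a built-in tension with (i): requiring $E_k$ to remain a genuine meridian of $V$ for every $k$ pushes one toward $\partial E_k=\phi^{k}(\partial E_0)$ with $\phi$ preserving the disk set, i.e. $\phi\in\Map(V)$, but then $T_{\partial E_k}$ is a bounded-length conjugate of $T_{\partial E_0}$ and nothing is distorted; being expensive in $\Map(V)$ forbids precisely that. Resolving this forces an explicit inductive band-sum/slide construction of the $E_k$ that is cheap to perform on the surface yet provably impossible to undo cheaply inside $V$, and the heart of the proof is this lower bound, which must rule out \emph{all} handlebody shortcuts; here the finer geometry of the handlebody group --- its action on the disk complex $\mathcal D(V)$ and the associated subsurface projections --- is what does the work, in contrast to the positive undistortion results for subsurface groups recalled above, where one instead builds quasigeodesics. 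Finally, for $g\ge2$ one reduces to the genus-one case with two spots by locating inside $V$ a solid-torus-with-two-spots handlebody carrying the construction, together with an undistorted subsurface mapping class group of $\partial V$ containing the corresponding elements.
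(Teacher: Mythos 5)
Your upper bound is essentially the paper's route: a cut system enriched to a configuration with finite stabilizer (the paper's ``rigid racks''), cocompactness of the $\Map(V)$-action on the resulting graph, the exponential bound $i(h(R_0),R_0)\le C^n$ coming from word length in $\Map(\partial V)$, and a surgery procedure removing intersections with controlled cost. One caveat: a single surgery step cannot be realized by ``boundedly many handlebody generators''; the twisting of the configuration about the new disk must be paid for, which is why the paper bounds the cost of each disk exchange move coarsely by the current intersection number and gets a total of order $N^2$ rather than $N$ times a constant. Since you only claim $\mathrm{poly}(N)$ overall, this is a repairable imprecision, not a gap.

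The lower bound, however, is a genuine gap, and it is the heart of the theorem. You propose $h_k=T_{\partial E_k}$, Dehn twists about meridians, and then correctly identify obstacle (ii) --- ruling out all handlebody shortcuts --- but you never resolve it: no construction of the disks $E_k$ is given, and no coarsely Lipschitz quantity on $\Map(V)$ is exhibited that would certify exponential word length. Worse, twists about meridians are a poorly adapted family for the only lower-bound mechanism that is actually available at this level of technology: they extend over the disks they twist about and act \emph{trivially} on $\pi_1(V)$, so the natural homomorphisms $\Map(V)\to\mathrm{Aut}(F_g)$ and $\Map(V)\to\Out(F_g)$ cannot see them at all, and your hoped-for ``handlebody-visible subsurface coefficient'' would have to be invented from scratch. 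The paper's elements are of a different nature: point-pushing maps coming from the Birman exact sequence (or their lifts to spotted handlebodies), conjugated by a mapping class $f$ supported on a subsurface which doubles homotopy classes in $\pi_1(V)$. Then $\Phi_k=f^k\mathcal{P}(a)f^{-k}$ has linear norm in $\Map(\partial V)$, lies in $\Map(V)$ because point pushes extend over the handlebody, and its image in $\mathrm{Aut}(F_g)$ (or, for closed surfaces, in $\Out(F_g)$ via a partial conjugation or an $N_k$-th power of a Nielsen twist) is an $N_k$-th power with $N_k\ge 2^k$; since infinite-order elements of $\mathrm{Aut}(F_g)$ have positive translation length and the projection $\Map(V)\to\mathrm{Aut}(F_g)$ is Lipschitz, the handlebody word norm grows exponentially. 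Without this (or some substitute certificate), your statement (ii) is an unproven assertion, so the proposal does not yet prove distortion; your closing reduction of $g\ge2$ to a spotted solid torus inside $V$ parallels the paper's genus-$2$ step, but it only helps once the spotted-torus elements and the Lipschitz invariant detecting them are actually in place.
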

Our result is also valid for handlebodies with marked points or
spots; allowing to lower the genus to $1$ if there
are at least two marked points or spots.
In the case of genus $0$ and the solid torus with one marked point 
the handlebody group is obviously undistorted and hence we obtain a
complete classification of distorted handlebody groups.

Apart from the mapping class group, the handlebody group is naturally related to
another important group. 
Namely, the action of $\Map(V_g)$ on the fundamental group 
of the handlebody defines a projection homomorphism 
onto the outer automorphism group $\mathrm{Out}(F_g)$ of a free group
with $g$ generators. 
However, by a theorem of McCullough \cite{Mc85}, the kernel of this
projection homomorphism is infinitely
generated and there are no known tools for transferring properties 
from $\Out(F_g)$ to the handlebody group.

A guiding question for future work is to compare the
geometry of the handlebody group to both mapping class groups and
outer automorphism groups of free groups. 
In particular, in a forthcoming article we shall identify 
quasi-geodesics in the handlebody group and use this
description to shed more light on the geometric nature of the projection to
$\Out(F_g)$ and the inclusion into the mapping class group.

The basic idea for the proof of the main theorem can be sketched in the special case of a
solid torus $V_{1,2}$ with two marked points. 
The handlebody group of a solid torus with one marked
point is infinite cyclic, generated by the Dehn twist $T$ about the unique
essential simple diskbounding curve.
Since point-pushing maps are contained in the handlebody group, the
Birman exact sequence yields that $\Map(V_{1,2})$ is equal to the 
fundamental group of the mapping torus of the once-punctured torus
defined by $T$. The Dehn twist $T$ acts on the fiber $\pi_1(T_{1,1})=F_2$ of the
Birman exact sequence as a
Nielsen twist, therefore in particular as an element of linear growth type.
This implies that the fiber is undistorted in the handlebody group. As
this fiber is exponentially distorted in the mapping class group by
\cite{BFP07}, the handlebody group of a torus with two
marked points is at least exponentially distorted in the corresponding
mapping class group.

In the general case, the argument is more involved since we have no
explicit description of the handlebody group. However, the basic idea
remains to show that parts of the fiber of some suitable Birman exact
sequence are undistorted in the handlebody group.

The upper distortion bound uses a geometric model for the handlebody group.
This model, the \emph{graph of rigid racks}, is similar in spirit to
the train track graph which was used in \cite{H09} to study the
mapping class group. We construct a family of distinguished paths
connecting any pair of points in this graph to each
other. The length of these paths can be bounded using intersection
numbers. The geometric control obtained this way allows to show
the exponential upper bound on distortion.

The paper is organized as follows. In
Section~\ref{sec:undistortion} we recall basic facts about handlebody groups
of genus $0$ and $1$. Section~\ref{sec:marked-points} contains the lower
distortion bound for handlebodies with at least one marked point or
spot. In Section~\ref{sec:closed-surfaces} we show the lower
distortion bound for closed surfaces. 
Section~\ref{sec:disk-systems} introduces a surgery
procedure for disk systems which is important for the construction of paths in the
handlebody group. Section~\ref{sec:racks} is devoted to the construction
of racks, and demonstrates some of their similarities (and differences) to train tracks on surfaces. Section~\ref{sec:rigid-racks} contains the
construction of the geometric model for the handlebody group and a
distinguished family of paths establishing the upper bound on distortion.

\vspace{1 mm}
\textbf{ACKNOWLEDGMENT:} The authors thank Karen Vogtmann for useful
discussions. The authors are also grateful to Lee Mosher for pointing out
the reference \cite{A02}.

\section{Low-complexity cases}\label{sec:undistortion}
As a first step, we analyze the cases of those genus $0$ and $1$ handlebody
groups which turn out to be undistorted.
The results in this section are easy and well-known, and we record
them here for completeness. 

To formulate the results in full generality,
we need to introduce the notion of handlebodies with marked points and spots.
A handlebody of genus $g$ with $k$ marked points and $s$ spots $V_{g,k}^s$ is a
handlebody of genus $g$, together with $s$ pairwise disjoint disks $D_1, \ldots,
D_s$ on its boundary surface $S_g$,  and $k$ pairwise distinct points
$p_1, \ldots, p_k$ in $\partial V_g\setminus(D_1\union\ldots\union D_s)$. 

The mapping class group $\Map(\partial V_{g,k}^s,
p_1,\ldots, p_k, D_1, \ldots, D_s)$ of the boundary surface
(with the same marked points and disks) consists of homeomorphisms of $\partial V_g$ which fix the set $\{p_1, \ldots, p_k\}$ and
restrict to the identity on each of the $D_i$ up to isotopy respecting
the same data. 
Note that this group agrees with the 
mapping class group of the bordered surface obtained by removing the
interior of the marked disks, as these mapping classes have to fix
each boundary component (following the definition in \cite[Section 2.1]{FM11}). 
In the same way as for the case without marked points or spots, the handlebody group 
$\Map(V^s_{g,p}, p_1, \ldots, p_k, D_1, \ldots, D_s)$
is defined as the subgroup of those isotopy classes of
homeomorphisms that extend to the
interior of $V^s_{g,p}$.

All curves and disks are required not to meet any of the marked points. 
A simple closed curve on $\partial V$ is \emph{essential} if it
is neither contractible nor freely homotopic to a marked point.
A disk $D$ in $V$ is called \emph{essential}, if $\partial D
\subset \partial V$ is an essential simple closed curve.

\begin{prop}
  \label{prop:ball-group}
  Let $V=V^s_{0,k}$ be a handlebody of genus $0$, with any number of
    marked points and spots. Then the handlebody group of $V$ is
    equal to the mapping class group of its boundary.
\end{prop}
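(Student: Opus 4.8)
The plan is to prove the two inclusions of subgroups of $\Map(\partial V,p_1,\dots,p_k,D_1,\dots,D_s)$ separately. One direction is immediate: by definition the handlebody group of $V=V_{0,k}^s$ is a subgroup of the mapping class group of its boundary. So it remains to show that every orientation preserving homeomorphism $f$ of the boundary sphere $S_0\cong S^2$ which fixes the set $\{p_1,\dots,p_k\}$ and restricts to the identity on each $D_j$ extends to a homeomorphism of $V$ respecting the same data; once this is done, the isotopy class of $f$ lies in the handlebody group, which gives the reverse inclusion.

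The key observation is that a genus $0$ handlebody is a $3$-ball, so the desired extension is produced by the Alexander trick. Concretely, I would realize $V$ as the cone on $S^2$, i.e.\ the quotient of $S^2\times[0,1]$ obtained by collapsing $S^2\times\{0\}$ to a point, with $\partial V$ the image of $S^2\times\{1\}$ identified with $S^2$. Then define $\hat f(x,t)=(f(x),t)$ for $t>0$ and let $\hat f$ fix the cone point. This descends to a homeomorphism of $V$ which restricts to $f$ on $\partial V$; it is orientation preserving precisely because its boundary restriction $f$ is; and, since the marked points $p_i$ and the spots $D_j$ all lie on $\partial V$, the map $\hat f$ permutes the $p_i$ exactly as $f$ does and restricts to the identity on each $D_j$. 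Hence $\hat f$ is a homeomorphism of the marked, spotted handlebody whose restriction to the boundary is $f$, which is exactly what is needed.

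There is no genuine obstacle here: this is the simplest of the low-complexity cases, and the argument is a direct application of the Alexander trick once one notices that $V$ is a ball and that the marked-point and spot data are carried entirely on the boundary. The only things to verify are those compatibility statements, and they are immediate. It is worth recording the contrast with the higher-genus situation treated in the rest of the paper: there $V$ is not a ball, the Alexander trick is unavailable, and many boundary homeomorphisms — for instance Dehn twists about non-diskbounding curves — fail to extend, which is precisely the source of the distortion phenomenon studied later.
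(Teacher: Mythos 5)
Your proof is correct and follows essentially the same route as the paper: the paper also extends a boundary homeomorphism of $S^2$ radially (i.e.\ by coning) to the $3$-ball, which is exactly your Alexander-trick construction. Your extra remarks about the marked points and spots being carried on the boundary are immediate and consistent with what the paper leaves implicit.
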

\begin{proof}
  Let $f:S^2\to S^2$ be any homeomorphism of the standard $2$-sphere
  $S^2\subset\RR^3$ onto itself. We can
  explicitly construct a radial extension $F:D^3\to D^3$ to the standard
  $3$-ball $D^3\subset\RR^3$ by setting
  $F(t\cdot x)=t\cdot f(x)$ for $x \in S^2, t\in [0,1]$.
  Therefore every mapping class group element is contained in the
  handlebody group.
\end{proof}
In particular, the handlebody groups of genus $0$ are undistorted in
the corresponding mapping class groups.
Similarly, for a solid torus with at most one marked point or spot, the
handlebody group can be explicitly identified and turns out to be
undistorted. 

To this end, suppose $V$ is a solid torus with at most one marked point
($V=V_{1,0}$ or $V=V_{1,1}$) or with one marked spot ($V=V_1^1$). 
Let $\delta$ be an essential simple closed
curve on the boundary torus of $V$ that bounds a disk in $V$.
The curve $\delta$ is uniquely determined up to isotopy.
\begin{prop}
  The handlebody group of $V$ is the stabilizer of $\delta$ in
  the mapping class group. In particular, it is undistorted in the
  mapping class group.

  Thus, if $V = V_{1,0}$ or $V=V_{1,1}$, then the
  handlebody group is cyclic and generated by the Dehn twist
  about $\delta$.

  If $V=V_1^1$, then the handlebody group
  is the free abelian group of rank $2$ which is generated by the Dehn
  twist about $\delta$ and the Dehn twist about the spot. 
\end{prop}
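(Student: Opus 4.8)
The plan is to prove that the handlebody group of $V$ coincides with the stabilizer of the isotopy class of $\delta$ in $\Map(\partial V)$ by a double inclusion, to read off its group structure from a presentation of this mapping class group, and then to deduce undistortion from the fact that Dehn twists are undistorted in the mapping class group.

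For the inclusion of the handlebody group into $\mathrm{Stab}(\delta)$, I would use that a solid torus (with any spots and marked points) is irreducible and that the boundary of an essential disk in $V$ is a non-peripheral simple closed curve on $\partial V$ which becomes trivial in $\pi_1(V)\cong\ZZ$. The only isotopy class with this property is $[\delta]$, so any two essential disks in $V$ have isotopic boundaries; since essential disks in a solid torus with isotopic boundaries are isotopic, every essential disk is isotopic to the meridian disk bounded by $\delta$. As any homeomorphism of $V$ permutes the isotopy classes of essential disks, it fixes $[\delta]$, whence the handlebody group lies in $\mathrm{Stab}(\delta)\subseteq\Map(\partial V)$.

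For the reverse inclusion I would exhibit, for each surface in question, a generating set of $\mathrm{Stab}(\delta)$ all of whose elements extend over $V$. Writing $V\cong D^2\times S^1$ with $\delta=\partial D^2\times\{\ast\}$, the homeomorphism $(z,w)\mapsto(zw,w)$ of $V$ restricts to $T_\delta$ on the boundary; a twist about a spot $D_i$ extends by twisting inside a collar $D_i\times[0,1]\subset V$; and the hyperelliptic involution of the torus extends by $(z,w)\mapsto(\bar z,\bar w)$, which is orientation preserving on $V$ (a product of two orientation-reversing maps) and can be arranged to fix a prescribed marked point. Using $\Map(\partial V_{1,0})\cong\Map(\partial V_{1,1})\cong SL(2,\ZZ)$, one checks that the stabilizer of the unoriented class of $\delta$ is generated by $T_\delta$ together with the hyperelliptic involution, which yields the (virtually) cyclic group generated up to a finite-order element by $T_\delta$; in the spot case $\Map(\partial V_1^1)$ is a central $\ZZ$-extension of $SL(2,\ZZ)$ by the spot twist, $\mathrm{Stab}(\delta)$ is generated by $T_\delta$ and this central twist, and these generate a free abelian group of rank $2$ because the spot twist generates the infinite cyclic kernel of the map to $SL(2,\ZZ)$ while $T_\delta$ has infinite order there. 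An alternative, purely topological route for this inclusion is to cut $V$ along the meridian disk to obtain a ball, extend the induced boundary homeomorphism over the ball by the radial trick of Proposition~\ref{prop:ball-group}, and reglue; the one subtlety is to identify the two copies of the meridian disk compatibly, and any discrepancy is a power of $T_\delta$.

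Granted this identification, undistortion is immediate: the handlebody group is finitely generated and virtually cyclic (respectively free abelian of rank $2$), generated up to finite index by Dehn twists, and by the argument of Farb, Lubotzky and Minsky \cite{FLM01} the powers $T_c^n$ of a Dehn twist have word length growing linearly in $|n|$ in $\Map(\partial V)$, as detected by the subsurface projection to the annulus around $c$; hence the inclusion of the handlebody group is a quasi-isometric embedding. I expect the reverse inclusion to be the main point: one must check, surface by surface, that a generating set of $\mathrm{Stab}(\delta)$ extends over $V$ while correctly tracking the marked points and spots, or equivalently carry out the cut-and-reglue argument and control the framing on the two copies of the meridian disk. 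The remaining ingredients --- irreducibility of the solid torus, uniqueness of the meridian, and undistortion of Dehn twists --- are standard.
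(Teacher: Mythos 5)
Your proposal is correct in substance, but it is organized differently from the paper's proof, and the difference matters at one point. For the inclusion $\mathrm{Stab}(\delta)\subset\Map(V)$ the paper does not exhibit generators at all: it observes that the disk bounded by $\delta$ cuts $V$ into a spotted ball and invokes Proposition~\ref{prop:ball-group}, which is precisely the cut-and-reglue alternative you sketch at the end (and the framing discrepancy you worry about on the two copies of the meridian disk is indeed absorbed by a power of $T_\delta$); this route is uniform in the number of spots and marked points and needs no presentation of $\mathrm{Stab}(\delta)$. For undistortion the paper simply cites that stabilizers of simple closed curves are undistorted (\cite{MM00}, \cite{H09b}), whereas you argue via \cite{FLM01} for twist subgroups; both are legitimate. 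The caution about your primary, generator-by-generator route is that it only yields $\mathrm{Stab}(\delta)\subset\Map(V)$ if the list really generates the stabilizer, and in the spotted case it does not: in $\Map(\partial V_1^1)$ the central lift of $-I$, which is a square root of the spot twist (for instance $(T_aT_b)^3$ for curves $a,b$ meeting once, by the two-chain relation $(T_aT_b)^6=T_{\mathrm{spot}}$), stabilizes $\delta$ but is not contained in $\langle T_\delta, T_{\mathrm{spot}}\rangle$, which therefore has index two in $\mathrm{Stab}(\delta)$; so in that case you need the cut-and-reglue argument to obtain the full equality (the stabilizer is still free abelian of rank~$2$, generated by $T_\delta$ and that square root). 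Relatedly, your $SL(2,\ZZ)$ bookkeeping for $V_{1,0}$ and $V_{1,1}$ is more careful than the statement itself: the hyperelliptic involution lies in the stabilizer and extends over the solid torus, so the handlebody group there is $\ZZ\times\ZZ/2$ rather than literally infinite cyclic. Neither of these points affects the conclusion that the handlebody group equals $\mathrm{Stab}(\delta)$ and is undistorted, since the group is in any case virtually a group of multitwists.
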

\begin{proof}
  The handlebody group fixes the set of isotopy classes of
  essential disks in $V$. 
  Since $\delta$ is the unique diskbounding curve up to isotopy,
  $\Map(V)$ therefore is contained in the stabilizer of $\delta$. On
  the other hand, the disk bounded by $\delta$ cuts $V$ into a spotted
  ball. Hence, by Proposition~\ref{prop:ball-group} the handlebody
  group $\mathrm{Map}(V)$ contains the stabilizer of $\delta$.

  If $V=V_{1,0}$ or $V_{1,1}$, the complement of $\delta$ in $\partial
  V$ is an annulus (possibly with a puncture). From this, it is
  immediate that the handlebody group is generated by the Dehn twist
  about $\delta$.

  If $V=V_1^1$, the same argument shows that then the
  handlebody group is 
  generated by the Dehn twist about $\delta$ and the spot. It is clear
  that these mapping classes commute.

  Since stabilizers of simple closed curves are known to be undistorted
  subgroups of the mapping class group (compare \cite{MM00} or
  \cite{H09b}), the handlebody group of a solid
  torus with at most one spot or marked point is undistorted. 
\end{proof}

\section{Handlebodies with marked points}\label{sec:marked-points}
In this section we describe the lower bound for distortion of handlebody groups
with marked points. We begin with the case of genus $g\geq 2$ with a
single marked point. The case of several marked points or spots will
be an easy consequence of this result. The case of a torus with
several marked points requires a different argument which will be
given at the end of this section.
\begin{theorem}\label{distortion-point}
  Let $V=V_{g,1}$ be a handlebody of genus $g\geq 2$ with one marked
  point, and let $\partial V=S_{g,1}$ be its boundary surface. 
  Then the handlebody group $\Map(V) < \Map(\partial V)$ is
  at least exponentially distorted.
\end{theorem}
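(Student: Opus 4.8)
The theorem asks for exponential distortion of $\Map(V_{g,1})$ in $\Map(\partial V_{g,1}) = \Map(S_{g,1})$ when $g \geq 2$. Following the idea sketched in the introduction, the strategy is to exhibit a sequence of elements that are short in $\Map(S_{g,1})$ but exponentially long in $\Map(V_{g,1})$. The natural source of such elements is the fiber of a Birman exact sequence. The key point is that point-pushing maps along loops in $\partial V$ extend over $V$, so the point-pushing subgroup $\pi_1(\partial V_g) = \pi_1(S_g)$ sits inside $\Map(V_{g,1})$, fitting into
\[
  1 \to \pi_1(S_g) \to \Map(V_{g,1}) \to \Map(V_g) \to 1,
\]
compatibly with the analogous Birman exact sequence for the surface mapping class groups. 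Now I would invoke the result of Broaddus–Farb–Putman (cited as \cite{BFP07}) that the point-pushing subgroup $\pi_1(S_g)$ is exponentially distorted in $\Map(S_{g,1})$: there is a sequence $h_i \in \pi_1(S_g)$ whose $\Map(S_{g,1})$-word norm grows linearly but whose $\pi_1(S_g)$-word norm grows exponentially. These $h_i$ lie in $\Map(V_{g,1})$ and have linearly growing $\Map(S_{g,1})$-norm; it therefore suffices to prove that their $\Map(V_{g,1})$-norm also grows exponentially, i.e. that $\pi_1(S_g)$ is \emph{undistorted} in $\Map(V_{g,1})$.

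**The main step: undistortion of the fiber in the handlebody group.**

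This is the heart of the argument and the main obstacle, since — unlike in the solid-torus case — there is no explicit presentation of $\Map(V_g)$ to work with. The plan is to find a way to "see" the fiber word-length from inside the handlebody group. One clean approach: choose a splitting of the free group $\pi_1(V_g) = F_g$ and a retraction-type argument. More concretely, the inclusion $\pi_1(S_g) \hookrightarrow \pi_1(V_g) = F_g$ is the surjection $\pi_1(S_g) \to F_g$ induced by $\partial V_g \hookrightarrow V_g$; its kernel $K$ is the normal closure of the meridians. The action of $\Map(V_{g,1})$ on $\pi_1(V_g^{\circ})$ (the fundamental group of the handlebody with a marked interior point, which is again free but on more generators, or one keeps track of the basepoint on the boundary) should let me realize the fiber $\pi_1(S_g)$ as acting by inner-type automorphisms whose growth is controlled. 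The cleanest version: restrict attention to an embedded once-punctured surface or to a non-separating disk $D$ in $V_g$ whose boundary $\partial D$ is fixed by a suitable finite-index subgroup; then a point-push along a loop that intersects $\partial D$ essentially acts on the core of $D$ (or on subsurface projections to the annulus about $\partial D$) with displacement growing linearly in the $\pi_1(S_g)$-word length of the loop. Since word length in $\Map(V_{g,1})$ coarsely bounds such subsurface-projection distances from above (these are Lipschitz for the inclusion into $\Map(S_{g,1})$, but one needs the \emph{reverse}: that moving a curve a lot forces many handlebody-group generators), the $\Map(V_{g,1})$-norm of $h_i$ is bounded below linearly in the fiber word-length of $h_i$, hence exponentially in $i$. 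Making this precise is the real work: one must produce a Lipschitz quasi-morphism or a coarsely monotone quantity on $\Map(V_{g,1})$ that is linear on the fiber. The model I would use is the same as in the torus case: pick a disk and track how its core arc (in the handlebody, as an element of a free group on which $\Map(V_{g,1})$ acts) gets wrapped; point-pushing acts by a product of Nielsen-type moves of linear growth, and the handlebody-group generators each change this quantity boundedly.

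**Assembling the bound and the outer cases.**

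Once undistortion of $\pi_1(S_g)$ in $\Map(V_{g,1})$ is in hand, the proof concludes formally: for the BFP sequence $h_i$, we have $\lVert h_i \rVert_{\Map(S_{g,1})} \preceq i$ while $\lVert h_i \rVert_{\pi_1(S_g)} \succeq e^{ci}$, and undistortion gives $\lVert h_i \rVert_{\Map(V_{g,1})} \succeq \lVert h_i \rVert_{\pi_1(S_g)} \succeq e^{ci}$. Combined with the (trivially Lipschitz) lower bound $\lVert h_i \rVert_{\Map(S_{g,1})} \leq \lVert h_i \rVert_{\Map(V_{g,1})}$ coming from the inclusion, this exhibits a sequence realizing at least exponential distortion, which is exactly the asserted lower bound. (The matching exponential \emph{upper} bound — that $\Map(V_{g,1})$-norm is at most exponential in $\Map(S_{g,1})$-norm — is not needed here; it is handled later via the graph of rigid racks.) Finally, I would remark that the statement for $g \geq 2$ with a single marked point is the base case: the cases of several marked points or spots follow by an easy inclusion/capping argument (a copy of $V_{g,1}$ sits inside $V_{g,k}^s$ compatibly with the word norms), and the genus-$1$-with-two-marked-points case is the separate, more explicit argument sketched in the introduction using the mapping torus of the Nielsen twist.

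I expect the undistortion of the fiber to be the crux, with the subtlety being that $\Map(V_g)$ (and hence the conjugation action on the fiber) is not explicitly presentable, so the linear lower bound on displacement must be extracted from a robust geometric quantity — subsurface projection to an annulus around a meridian, or the action on a free factor of $\pi_1$ of the handlebody — rather than from combinatorics of a presentation.
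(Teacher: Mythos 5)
Your reduction hinges on the claim that the fiber $\pi_1(S_g)$ of the Birman exact sequence is \emph{undistorted} in $\Map(V_{g,1})$, and this claim is false for $g\geq 2$. The handlebody group of $V_g$ contains mapping classes (e.g.\ pseudo-Anosovs, which exist in $\Map(V_g)$) that act on $\pi_1(S_g)$ with exponential growth; lifting such an element $\varphi$ to $\Map(V_{g,1})$ and writing $\varphi^k\mathcal{P}(\gamma)\varphi^{-k}=\mathcal{P}(\varphi^k\gamma)$ produces elements of the fiber whose word length in $\pi_1(S_g)$ grows exponentially while their word length in $\Map(V_{g,1})$ grows only linearly. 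So the very mechanism behind the Broaddus--Farb--Putman distortion operates \emph{inside} the handlebody group as well, and exponential fiber length of the BFP elements $h_i$ gives no lower bound on their $\Map(V_{g,1})$-norm; indeed, if the conjugating pseudo-Anosov used to build the $h_i$ happens to lie in $\Map(V_g)$, those $h_i$ have linear handlebody norm and witness nothing. Your fallback suggestion of detecting length via subsurface projection to an annulus about a meridian also cannot work: any quantity that changes boundedly under each mapping class group generator is Lipschitz on all of $\Map(S_{g,1})$, so a lower bound of size $2^k$ for it would contradict the linear mapping class group norm of the chosen elements. The quantity must be defined (and Lipschitz) only on the handlebody group.

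This is exactly what the paper does, and it is the idea your last hint gestures at but does not carry out. One takes $\delta$ separating off a one-holed torus $T$ and bounding a disk, loops $a,b$ generating $\pi_1(T)$ with $b$ diskbounding (so $P(b)=0$, $P(a)=A$ primitive in $\pi_1(V)=F_g$), and a mapping class $f$ supported in $T$ with $f(a)=a^2*b$, $f(b)=a*b$; note $f\notin\Map(V,p)$ since $f(b)$ is not diskbounding. The elements $\Phi_k=\mathcal{P}(f^k a)=f^k\mathcal{P}(a)f^{-k}$ have linear norm in $\Map(S,p)$, lie in the handlebody group, and act on $\pi_1(V,p)=F_g$ as conjugation by $A^{N_k}$ with $N_k\geq 2^k$. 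The exponential lower bound in $\Map(V,p)$ then comes from the surjective (hence Lipschitz) homomorphism $\pi:\Map(V,p)\to\mathrm{Aut}(F_g)$ together with Alibegovi\'c's theorem that infinite-order elements of $\mathrm{Aut}(F_g)$ have positive translation length, so that conjugation by $A^{N_k}$ has word norm growing linearly in $N_k$. Without (i) a choice of pushed loops whose image in $\pi_1(V)$ grows exponentially under a conjugator lying \emph{outside} the handlebody group, and (ii) a translation-length (or comparable) input in $\mathrm{Aut}(F_g)$ to convert that growth into word length, your argument does not close; as written, the central step is both unproved and, in the form stated, untrue.
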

The proof is based on the relation between  
the mapping class group of a closed surface $S_g$ and the mapping
class group of a once-punctured surface $S_{g,1}$.
We denote the marked point of $\partial V=S_{g,1}$ by $p$, and 
we will often denote the mapping class group of $S_{g,1}$ by $\Map(S_g,p)$.

Recall the definition of the \emph{point-pushing map} ${\mathcal P}:\pi_1(S,
p)\to\Map(S,p)$. Namely, let $\gamma:[0,1]\to S$ be a loop in $S$
based at $p$. Then there is an isotopy $f_t:S\to S$ supported in a small
neighborhood of the loop $\gamma[0,1]$ such that
$f_0 = \id$, and $f_t(p)=\gamma(t)$. To see this, note that
locally around $\gamma(t_0)$ such an isotopy certainly exists (for
example, since any orientation preserving homeomorphism of the disk is
isotopic to the identity). The
image of $\gamma$ is compact, and hence the desired isotopy can be
pieced together from finitely many such local isotopies.
The endpoint $f_1$ of such an isotopy is a homeomorphism of
$(S,p)$. We call its isotopy class the point pushing map
$\mathcal{P}(\gamma)$ along $\gamma$. It depends only on the homotopy
class of $\gamma$.

The image of the point pushing map is contained
in the handlebody group $\Map(V, p)$ -- to see this, simply define the local
version by pushing a small half-ball instead of a disk.

By construction, the image of the point pushing map lies in the kernel
of the forgetful homomorphism $\Map(S, p) \to \Map(S)$ induced by the
puncture forgetting map $(S,p)\to (S,S)$. In fact this is all of
the kernel, compare \cite{Bi74}.
\begin{theorem}[Birman exact sequence]\label{birman}
  Let $S$ be a closed oriented surface of genus $g\geq 2$ and $p \in S$ any
  point. The sequence
  $$\xymatrix{ 1 \ar[r] & \pi_1(S,p) \ar^{\mathcal P}[r] & \Map(S,p)
   \ar[r] & \Map(S) \ar[r] & 1}$$
  is exact. 
\end{theorem}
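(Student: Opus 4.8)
The plan is to derive the exact sequence from the long exact homotopy sequence of an evaluation fibration, which is the classical argument (compare \cite{Bi74}). Equip the group $\mathrm{Homeo}^+(S)$ of orientation-preserving homeomorphisms of $S$ with the compact-open topology, and consider the evaluation map $\mathrm{ev}\colon \mathrm{Homeo}^+(S)\to S$, $f\mapsto f(p)$. Since $\mathrm{Homeo}^+(S)$ acts transitively on $S$, the map $\mathrm{ev}$ is surjective, and an application of the isotopy extension theorem shows that it is a locally trivial fibre bundle whose fibre over $p$ is the subgroup $\mathrm{Homeo}^+(S,p)$ of homeomorphisms fixing $p$. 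Because $S$ is connected, the tail of the long exact homotopy sequence of this fibration reads
$$\pi_1(\mathrm{Homeo}^+(S),\id)\to \pi_1(S,p)\to \pi_0(\mathrm{Homeo}^+(S,p))\to \pi_0(\mathrm{Homeo}^+(S))\to \pi_0(S)=\ast .$$
Identifying $\pi_0(\mathrm{Homeo}^+(S,p))=\Map(S,p)$ and $\pi_0(\mathrm{Homeo}^+(S))=\Map(S)$ already yields exactness of the sequence $\pi_1(S,p)\to\Map(S,p)\to\Map(S)\to 1$; in particular the forgetful homomorphism is surjective.

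Next I would identify the connecting homomorphism $\pi_1(S,p)\to\pi_0(\mathrm{Homeo}^+(S,p))$ with the point-pushing map $\mathcal P$. Unwinding the definition of the connecting map: given a loop $\gamma$ based at $p$, one lifts $\gamma$ through $\mathrm{ev}$ to a path $t\mapsto f_t$ in $\mathrm{Homeo}^+(S)$ with $f_0=\id$ and $f_t(p)=\gamma(t)$, and the endpoint $f_1\in\mathrm{Homeo}^+(S,p)$ represents the image of $[\gamma]$. This is precisely the homeomorphism used to define $\mathcal P(\gamma)$ in the discussion preceding the theorem. Hence the connecting map is $\mathcal P$, and exactness of the displayed sequence at the $\Map(S,p)$ term gives $\mathrm{im}(\mathcal P)=\ker\bigl(\Map(S,p)\to\Map(S)\bigr)$, which together with surjectivity establishes everything except injectivity of $\mathcal P$.

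Finally I would prove injectivity of $\mathcal P$, which by exactness amounts to showing that the map $\pi_1(\mathrm{Homeo}^+(S),\id)\to\pi_1(S,p)$ is trivial, and it suffices to know $\pi_1(\mathrm{Homeo}^+(S),\id)=0$. Since $\mathrm{Homeo}^+(S)$ is a topological group, all of its components are homeomorphic to the identity component $\mathrm{Homeo}_0(S)$, so this reduces to $\pi_1(\mathrm{Homeo}_0(S))=0$, which holds for $g\geq 2$ because $\mathrm{Homeo}_0(S)$ is contractible (Hamstrom's theorem, or the Earle--Eells theorem in the smooth category). I expect this last step to be the main obstacle: it is exactly where the hypothesis $g\geq 2$ enters, it fails in low genus (for the torus $\mathrm{Homeo}_0$ is homotopy equivalent to a torus, so $\pi_1\neq 0$ and $\mathcal P$ is in fact trivial there), and unlike the rest of the argument it rests on a genuinely nontrivial input about the homotopy type of homeomorphism groups of surfaces rather than on soft point-set considerations.
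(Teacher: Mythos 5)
Your argument is correct, and it is essentially the classical proof of the Birman exact sequence; note, however, that the paper does not prove Theorem~\ref{birman} at all but cites \cite{Bi74}, where the fibration argument you give is carried out. So your route coincides with the cited source rather than with anything in the text. All the steps check out: the evaluation map $\mathrm{ev}\colon \mathrm{Homeo}^+(S)\to S$ is a locally trivial bundle (local sections can be built from point-pushes supported in a disk chart), the tail of its long exact sequence gives exactness at $\Map(S,p)$ and surjectivity onto $\Map(S)$, the connecting map is by construction exactly the point-pushing map as defined in the paragraph preceding the theorem, and injectivity reduces to $\pi_1(\mathrm{Homeo}_0(S))=0$, which is the genuinely nontrivial input (Hamstrom, or Earle--Eells in the smooth category) and is correctly flagged as the place where $g\geq 2$ enters. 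It is worth contrasting this with the viewpoint the paper offers right after the statement: via the Dehn--Nielsen--Baer theorem the Birman sequence is identified with
$1\to \mathrm{Inn}(\pi_1(S,p))\to \mathrm{Aut}(\pi_1(S,p))\to \mathrm{Out}(\pi_1(S,p))\to 1$,
so in that algebraic picture injectivity of $\mathcal P$ corresponds to the triviality of the center of $\pi_1(S)$ for $g\geq 2$, whereas surjectivity and exactness at the middle term are absorbed into the (nontrivial) Dehn--Nielsen--Baer isomorphisms. Your approach localizes the genus hypothesis in the homotopy type of the homeomorphism group, the paper's diagram localizes it in the center of the surface group; both are legitimate, and your write-up is complete modulo the standard care needed to interpret the $\pi_0$-level terms of the fibration sequence as group homomorphisms, which the topological group structure on $\mathrm{Homeo}^+(S)$ provides.
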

The point pushing map is natural in the sense that
\begin{equation}\label{naturality}
\mathcal{P}(f\alpha) = f \circ \mathcal{P}(\alpha) \circ f^{-1}
\end{equation}
for each $f \in \Map(S, p)$ (see \cite{Bi74} for a proof of this
fact).

The Birman exact sequence corresponds to the
relation between the inner and the outer automorphism group of $\pi_1(S,p)$:
  $$\xymatrix{ 1 \ar[r] & \pi_1(S,p) \ar^{\mathcal P}[r]\ar^\cong[d] & \Map(S,p)\ar^\cong[d]
   \ar[r] & \Map(S)\ar^\cong[d] \ar[r] & 1 \\
   1 \ar[r] & \mathrm{Inn}(\pi_1(S,p)) \ar[r] & \mathrm{Aut}(\pi_1(S, p))
   \ar[r] & \mathrm{Out}(\pi_1(S, p)) \ar[r] & 1}$$
where $\pi_1(S, p)$ can be identified with its inner automorphism
group because it has trivial center, and the other two isomorphisms
are given by the Dehn-Nielsen-Baer theorem.
In other words, we have the following.
\begin{lemma}\label{point-pushing-action}
  Let $[\gamma],[\alpha]\in\pi_1(S, p)$ be two loops at $p$. Then
  $$\mathcal{P}(\alpha)(\gamma) = [\alpha] * [\gamma] *
  [\alpha]^{-1}$$
  where $*$ denotes concatenation of loops, and takes place left-to-right.
\end{lemma}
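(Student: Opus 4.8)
The statement is a reformulation of the commutativity of the left-hand square in the diagram preceding it, so the plan is to check directly that the automorphism of $\pi_1(S,p)$ induced by the homeomorphism $\mathcal{P}(\alpha)$ is conjugation by $[\alpha]$. First I would fix a representative: choose a loop $\alpha$ based at $p$ and a point-pushing isotopy $f_t\colon S\to S$ with $f_0=\id$ and $f_t(p)=\alpha(t)$, so that $\mathcal{P}(\alpha)$ is the isotopy class of $f_1$ and, for a loop $\gamma$ based at $p$, one has $\mathcal{P}(\alpha)(\gamma)=[f_1\circ\gamma]$. That $\mathcal{P}(\alpha)_*$ is at least \emph{some} inner automorphism is immediate from the exactness in Theorem~\ref{birman} together with the commutativity of the right-hand square of the diagram: $\mathcal{P}(\alpha)$ lies in the kernel of $\Map(S,p)\to\Map(S)$, hence $\mathcal{P}(\alpha)_*$ lies in the kernel of $\mathrm{Aut}(\pi_1(S,p))\to\mathrm{Out}(\pi_1(S,p))$, which is $\mathrm{Inn}(\pi_1(S,p))$. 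The real task is to pin down which inner automorphism.

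To do this I would consider the continuous map $H\colon[0,1]\times[0,1]\to S$ defined by $H(s,t)=f_t(\gamma(s))$. Its four sides are $H(\cdot,0)=\gamma$, $H(\cdot,1)=f_1\circ\gamma$, and $H(0,\cdot)=H(1,\cdot)=\alpha$ (using $\gamma(0)=\gamma(1)=p$ together with $f_t(p)=\alpha(t)$), and all four corners are sent to $p$. Conceptually, $H$ exhibits $f_1\circ\gamma$ as freely homotopic to $\gamma$ by a homotopy whose basepoint track is $\alpha$, so the change-of-basepoint formula gives the result. Concretely, since $H$ is defined on the whole simply connected square, the loop based at $p$ obtained by running once around its boundary is null-homotopic; reading off this boundary word in the left-to-right concatenation convention of the statement yields the relation $[\gamma]*[\alpha]*[f_1\circ\gamma]^{-1}*[\alpha]^{-1}=1$ in $\pi_1(S,p)$, that is, $\mathcal{P}(\alpha)(\gamma)=[f_1\circ\gamma]=[\alpha]^{\pm1}*[\gamma]*[\alpha]^{\mp1}$. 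Thus $\mathcal{P}(\alpha)_*$ is conjugation by $[\alpha]$, with the exponent being exactly the one that makes the Birman/Dehn--Nielsen--Baer diagram above commute, once the (already fixed) conventions for the direction of the pushing isotopy and for the action of $\Map(S,p)$ on $\pi_1(S,p)$ are unwound.

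The only point needing any care is precisely this bookkeeping of orientations: the conjugating element emerges as $[\alpha]$ or $[\alpha]^{-1}$ according to conventions, and this is a nuisance rather than a genuine obstacle. As a cross-check one can instead invoke the naturality relation \eqref{naturality}: it forces the assignment sending $[\alpha]$ to the conjugator of $\mathcal{P}(\alpha)_*$ to be equivariant for the $\Map(S,p)$-action on $\pi_1(S,p)$, which already narrows the conjugator to $[\alpha]^{\pm1}$, and then a single explicit example — pushing along a simple loop, where $\mathcal{P}(\alpha)$ is visibly the product of the oppositely-oriented Dehn twists about the two boundary curves of an annular neighborhood of $\alpha$ — fixes the sign and recovers the stated formula.
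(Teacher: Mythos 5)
Your argument is correct, but it takes a different route from the paper's. The paper does not prove Lemma~\ref{point-pushing-action} by a direct computation at all: it presents the commutative ladder between the Birman exact sequence and the sequence $1\to\mathrm{Inn}(\pi_1(S,p))\to\mathrm{Aut}(\pi_1(S,p))\to\mathrm{Out}(\pi_1(S,p))\to 1$, using that $\pi_1(S,p)$ has trivial center and invoking the Dehn--Nielsen--Baer theorem for the two right-hand isomorphisms, and then reads the lemma off as a restatement of the commutativity of the left-hand square. You instead prove that commutativity from scratch: the homotopy $H(s,t)=f_t(\gamma(s))$ on the square, with basepoint track $\alpha$, is exactly the classical change-of-basepoint argument, and it yields the conjugation formula directly without appealing to Dehn--Nielsen--Baer or to exactness (your preliminary observation that $\mathcal{P}(\alpha)_*$ is inner is in fact redundant once the square computation is done). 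This buys a self-contained, elementary proof of the fact the paper only cites; what it costs is the sign bookkeeping you flag at the end. Note that, as literally written, your boundary relation $[\gamma]*[\alpha]*[f_1\circ\gamma]^{-1}*[\alpha]^{-1}=1$ pins down $[f_1\circ\gamma]=[\alpha]^{-1}*[\gamma]*[\alpha]$, which is the opposite sign from the displayed statement; since the paper never nails down the orientation conventions (direction of the push versus left-to-right concatenation versus right-to-left composition), and since only the fact that the action is conjugation by a definite power of $[\alpha]$ is ever used (in Theorem~\ref{distortion-point} and its sequels), your resolution of the sign by convention-matching and the annulus/Dehn-twist cross-check is acceptable, though it would be cleaner to state once and for all which convention produces which exponent.
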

Now we are ready to give the proof of the main theorem of this section.
\begin{proof}[Proof of Theorem~\ref{distortion-point}]
  Let $\delta$ be a separating simple closed curve on $S$ such that
  one component of $S \setminus \delta$ is a bordered torus $T$ with
  one boundary circle,
  and such that $\delta$ bounds a disk $\D$ in the handlebody $V$.
  Without loss of generality we assume that the base point $p$
  lies on $\delta$. 
  \begin{figure}[h!]
    \centering
     \includegraphics[width=0.7\textwidth]{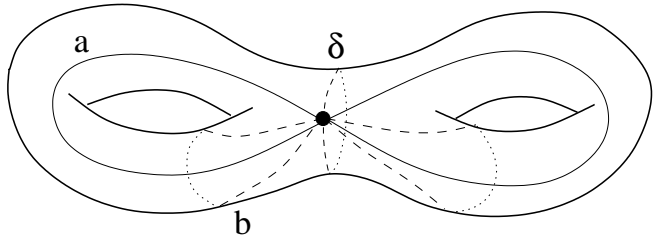}
    \caption{The setup in the proof of
      Theorem~\ref{distortion-point}. Generators for the fundamental
      group of the handlebody are drawn solid, the loops
      extending these to a generating set of $\pi_1(S,p)$ are drawn dashed.}
    \label{fig:point}
  \end{figure}

  Choose loops $a,b$ based at $p$ which generate the fundamental group
  of $T$ and such that $b$ bounds a disk in $V$ (and hence $a$ does not).
  Extend $a, b$ to a generating set of the fundamental group of
  $\pi_1(S,p)$ by adding loops in the complement of $T$ (see
  Figure~\ref{fig:point}).
  Let $f \in \Map(S, p)$ be a mapping class such that $f(a) = a^2 * b$
  and $f(b) = a*b$ which preserves $\delta$ and acts as the identity on
  $S\setminus T$. Such an $f$ can for example be obtained as the
  composition of suitably
  oriented Dehn twists along $a$ and $b$. 

  Define $\Phi_k = \mathcal{P}(f^k a)$. By Equation~(\ref{naturality}), in the mapping
  class group $\Map(S, p)$ we have $\Phi_k=f^k\mathcal{P}(a)f^{-k}$,
  and hence the word norm of $\Phi_k$ in the mapping class group with
  respect to any generating set grows
  linearly in $k$.

  On the other hand, consider the map
  $$\Map(V, p) \stackrel{\pi}{\to} \mathrm{Aut}(\pi_1(V, p)) = \mathrm{Aut}(F_g)$$
  defined by the action on the fundamental group. 
  Lemma~\ref{point-pushing-action} implies that
  $\Phi_k$ acts on $\pi_1(S,p)$ as conjugation by $f^k(a)$.
  To compute the action of $\pi(\Phi_k)$ on $\pi_1(V,p)$, denote the
  projection of the fundamental group of the surface $S$ to the
  fundamental group of the handlebody by
  $P:\pi_1(S,p) \to \pi_1(V,p)$.

  Since $b$ bounds a disk in $V$, its projection vanishes: $P(b)=0$.
  The generator $a$ of $\pi_1(S,p)$ projects to a primitive element in
  $\pi_1(V,p)$, $P(a) = A$.
  Hence $P(f^k(a)) = A^{N_k}$ for some $N_k > 0$.
  The choice of $f$ guarantees that we have $N_k \geq 2^k$.
  Since the point pushing map is natural with respect to the
  projection to the handlebody, $\pi(\Phi_k)$ acts on $\pi_1(V,p)$ as
  conjugation by $A^{N_k}$. 

  In other words, as an element of $\mathrm{Aut}(F_g)$ the projection $\pi(\Phi_k)$ is the
  $N_k$--fold power of the conjugation by $A$. Since conjugation
  by $A$ is an infinite order element in $\mathrm{Aut}(F_g)$ and all
  infinite order elements have positive translation length (compare
  \cite[Theorem 1.1]{A02}) this implies that the word norm of
  $\pi(\Phi_k)$ grows exponentially in $k$. 
  As $\pi: \Map(V, p) \to\mathrm{Aut}(F_g)$ is a surjective homomorphism between
  finitely generated groups, it is Lipschitz with respect to any
  choice of word metrics. Therefore, the word norm of $\Phi_k$ in
  $\Map(V, p)$ also grows exponentially in $k$. This shows the theorem. 
\end{proof}
\begin{remark}
  The proof we gave extends verbatim to the case of the pure
  handlebody group of a handlebody of genus $g\geq 2$ with several marked points and any
  number of spots (just move everything but one marked point into the
  complement of $T$). Here, the pure handlebody group is the subgroup
  of those mapping classes which send each marked point to itself.
  Since this group has finite index in the full handlebody group, the
  proof also shows that handlebody
  groups with several marked points and any number of spots are at
  least exponentially distorted if the genus is at least $2$. 
\end{remark}
As a next case, we consider handlebody groups of handlebodies with
spots instead of marked points.
\begin{cor}\label{distortion-spot}
  Let $V=V_g$ be a genus $g\geq 2$ handlebody and let $D
  \subset \partial V$ be a spot. 
  Then the handlebody group $\Map(V, D) < \Map(\partial V, D)$ of the
  spotted handlebody is at least exponentially distorted.
\end{cor}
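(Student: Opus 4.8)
The goal is to deduce exponential distortion for a genus $g \geq 2$ handlebody with one spot $D$ from the already-established Theorem~\ref{distortion-point} for a handlebody with one marked point. The natural tool is the relation between the mapping class group of a surface with a spot (boundary component) and the mapping class group of a surface with a marked point: capping off the spot $D$ with a once-punctured disk yields a forgetful/capping homomorphism $\Map(\partial V, D) \to \Map(\partial V, p)$ whose kernel is generated by the Dehn twist $T_D$ about $\partial D$, and this homomorphism restricts to a corresponding homomorphism $\Map(V, D) \to \Map(V, p)$ on the handlebody groups (a homeomorphism of $V$ fixing $D$ pointwise, after capping, extends over the once-punctured disk, so it lies in the handlebody group of the marked-point handlebody).

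The plan is as follows. First I would set up the capping homomorphism $c : \Map(\partial V_g, D) \to \Map(\partial V_g, p)$ and check that it sends $\Map(V, D)$ onto $\Map(V, p)$; this is the analogue of the Birman exact sequence with the kernel $\langle T_D \rangle$ being central in $\Map(\partial V, D)$. Second, I would lift the elements $\Phi_k$ from the proof of Theorem~\ref{distortion-point}: pick lifts $\widetilde{\Phi}_k \in \Map(V, D)$ with $c(\widetilde{\Phi}_k) = \Phi_k$, chosen so that the word norm of $\widetilde{\Phi}_k$ in $\Map(\partial V, D)$ still grows only linearly in $k$ (since $c$ has a section up to the central element $T_D$, and $\Phi_k = f^k \mathcal{P}(a) f^{-k}$ admits an obvious lift of linearly growing length, namely the same word in lifted generators). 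Third, I would argue that the word norm of $\widetilde{\Phi}_k$ in $\Map(V, D)$ grows exponentially: since $c$ is a Lipschitz surjection between finitely generated groups, we have $\|\Phi_k\|_{\Map(V,p)} \leq C \|\widetilde{\Phi}_k\|_{\Map(V,D)}$, and the left side grows exponentially by Theorem~\ref{distortion-point}. Combining the linear upper bound in the big group with the exponential lower bound in the subgroup gives exponential distortion.

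Concretely, in LaTeX the argument reads roughly: \emph{Let $T_D \in \Map(\partial V, D)$ denote the Dehn twist about $\partial D$. Capping $D$ with a once-punctured disk induces a surjective homomorphism $c : \Map(\partial V, D) \to \Map(\partial V, p)$ with kernel generated by $T_D$, which restricts to a surjection $\Map(V, D) \to \Map(V, p)$. For each $k$, choose $\widetilde{\Phi}_k \in \Map(V, D)$ with $c(\widetilde{\Phi}_k) = \Phi_k$, taking $\widetilde{\Phi}_k = \widetilde{f}^{\,k} \widetilde{\mathcal{P}(a)}\, \widetilde{f}^{\,-k}$ for fixed lifts of $f$ and $\mathcal{P}(a)$; then $\|\widetilde{\Phi}_k\|_{\Map(\partial V, D)}$ grows at most linearly in $k$. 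On the other hand, $c$ is Lipschitz, so $\|\Phi_k\|_{\Map(V,p)} \leq C\, \|\widetilde{\Phi}_k\|_{\Map(V,D)}$ for a constant $C$; since the left-hand side grows exponentially in $k$ by Theorem~\ref{distortion-point}, so does $\|\widetilde{\Phi}_k\|_{\Map(V,D)}$. Hence $\Map(V, D)$ is at least exponentially distorted in $\Map(\partial V, D)$.}

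The main obstacle — and the one place requiring genuine care rather than formal bookkeeping — is verifying that the capping homomorphism genuinely restricts to a \emph{surjection} of handlebody groups, i.e.\ that every mapping class in $\Map(V, p)$ lifts to one in $\Map(V, D)$ and not merely to one in $\Map(\partial V, D)$. This amounts to checking that a homeomorphism of $V$ fixing the marked point $p$ can be isotoped so that it is the identity on a small disk around $p$ in $\partial V$ while still extending over $V$; one does this by pushing a small half-ball (exactly as in the construction of the point-pushing map into the handlebody group in the text) to absorb the local rotation around $p$, which is legitimate because orientation-preserving homeomorphisms of the disk fixing a point are isotopic, through such homeomorphisms, to ones that are the identity near that point. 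Once this surjectivity is in hand, the distortion estimate is a direct transfer via the Lipschitz property, exactly as in the proof of Theorem~\ref{distortion-point}.
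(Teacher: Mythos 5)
Your proposal is correct and follows essentially the same route as the paper: collapse/cap the spot to get the homomorphism with kernel $\langle T\rangle$, lift $\Phi_k$ as $\widetilde{f}^{\,k}\widetilde{\Phi}\widetilde{f}^{\,-k}$ so its norm in $\Map(\partial V,D)$ grows linearly, and use the Lipschitz projection $\Map(V,D)\to\Map(V,p)$ together with Theorem~\ref{distortion-point} to get the exponential lower bound. The only difference is that you spell out the lifting/surjectivity point for the handlebody groups, which the paper simply asserts.
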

\begin{proof}
  Note that there is a commutative diagram with surjective projection homomorphisms
  $$\xymatrix{0\ar[r]& \ar[r]\ar[d]^{=}\left<T\right>&\ar[d]\ar[r] \Map(V, D) & \ar[d]\ar[r]\Map(V, p)&0\\
  0\ar[r]&\ar[r]\left<T\right>&\ar[r] \Map(\partial V, D) &
  \ar[r]\Map(\partial V, p)&0}$$
  induced by collapsing the marked spot to a point. The kernel of such a
  projection homomorphism is infinite
  cyclic and generated by the Dehn twist $T$ about the spot. In
  particular, every element $g$ in $\Map(\partial V,p)$ lifts to an element in
  $\Map(\partial V,D)$, and if $g \in \Map(V,p)$ then the lift is contained in the
  handlebody group $\Map(V,D)$.
  These lifts are well-defined up to the Dehn twist $T$ which lies in
  the handlebody group and acts trivially on $\pi_1(V,p)$.

  Choose any lift $\widetilde{f}$ of the element $f$ used in the proof of
  Theorem~\ref{distortion-point}.
  Let $\widetilde{\Phi}$ be a lift of the point pushing map
  $\Phi_0$ defined in the proof of Theorem~\ref{distortion-point}, and define $\widetilde{\Phi}_k =
  \widetilde{f}^k\widetilde{\Phi}\widetilde{f}^{-k}$. Note that these
  elements are lifts of the elements $\Phi_k$ and therefore contained in the
  handlebody group.
 
  Now $\widetilde{\Phi}_k$ has word norm in $\Map(S, D)$ again 
  bounded linearly in $k$. As elements of the spotted handlebody
  group the word norm of $\widetilde{\Phi}_k$ grows exponentially in
  $k$, as this is true for the $\Phi_k$.
\end{proof}
\begin{remark}
  Again, the same proof works for handlebodies with more than one
  spot and any number of marked points.
\end{remark}
As a last case, we consider the handlebody of a torus with more than
one marked point.
\begin{theorem}
  Let $V=V_{1,n}$ be a solid torus with $n\geq2$ marked points. Then the
  handlebody group $\Map(V)$ is at least exponentially distorted in
  $\Map(\partial V)$.
\end{theorem}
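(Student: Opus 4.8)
The plan is to mimic the argument sketched in the introduction in the case $n=2$, where $\Map(V_{1,2})$ turns out to be a semidirect product $F_2\rtimes\ZZ$, and then to deduce the cases $n\ge 3$ from this one via the point-forgetting homomorphism. Throughout I work with pure mapping class groups; since these are finitely presented and of finite index in the full mapping class and handlebody groups, the resulting distortion estimates transfer to the groups in the statement, just as in the Remark following Theorem~\ref{distortion-point}.

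Consider first $n=2$. Forgetting the marked point $p_2$, and using that point-pushing maps lie in the handlebody group, the Birman exact sequence restricts to
$$1\longrightarrow N\longrightarrow\Map(V_{1,2})\longrightarrow\Map(V_{1,1})\longrightarrow 1,\qquad N:=\pi_1(S_{1,1}\setminus\{p_1\},p_2)\cong F_2,$$
where surjectivity on the right holds because any homeomorphism of $V_{1,1}$ may be isotoped to the identity near a boundary disk, into which one then places $p_2$. By the previous section $\Map(V_{1,1})=\langle T\rangle\cong\ZZ$, so the sequence splits and $\Map(V_{1,2})\cong N\rtimes_\phi\ZZ$, with $\phi$ the conjugation action of a lift $\tilde T$ of $T$. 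By naturality of point-pushing (Equation~\eqref{naturality}), $\phi$ is the automorphism of $N$ induced by $\tilde T$; choosing the twisting annulus of $T=T_\delta$ disjoint from $p_2$, this is the transvection $m\mapsto m$, $\ell\mapsto\ell m$ in a free basis $\{m,\ell\}$ of $N$ with $m=\delta$. In particular $\phi$ and $\phi^{-1}$ grow linearly.

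The crucial point is that $N$ is then at most polynomially distorted in $\Map(V_{1,2})$. Indeed, in any semidirect product $F\rtimes_\phi\ZZ$ with $\phi^{\pm1}$ of polynomial growth, an element $g\in F$ of ambient word norm $r$ is a product of $r$ standard generators; pushing all occurrences of the stable letter to one end (where they cancel, as $g\in F$) replaces each $F$-generator by $\phi^{\pm j}(\cdot)$ with $|j|\le r$, hence by an $F$-word of length $O(r^{D})$ for a fixed $D$, so $\|g\|_F\le C\,r^{D+1}$; equivalently $\|g\|_{\Map(V_{1,2})}\ge c\,\|g\|_N^{1/(D+1)}$ for $g\in N$. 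Now take a hyperbolic (Anosov) element $\psi$ of $\Map(S_{1,1})\cong\mathrm{SL}_2(\ZZ)$ and a lift $\tilde\psi\in\Map(S_{1,2})$ fixing $p_2$, and let $\alpha\in N$ be a free generator. Since $\tilde\psi$ acts on $H_1(N)=\ZZ^2$ by a hyperbolic matrix, $\|\tilde\psi_*^{\,k}(\alpha)\|_N\succeq\Lambda^k$ for some $\Lambda>1$, already on the level of homology. Put $\beta_k:=\tilde\psi^k\,\mathcal{P}(\alpha)\,\tilde\psi^{-k}=\mathcal{P}(\tilde\psi_*^{\,k}(\alpha))$, which lies in $N\le\Map(V_{1,2})$. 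Written as a word in $\tilde\psi^{\pm1}$ and $\mathcal{P}(\alpha)$, the element $\beta_k$ has $\Map(S_{1,2})$-norm $O(k)$, while the displayed inequality forces $\|\beta_k\|_{\Map(V_{1,2})}\ge c\,\Lambda^{k/(D+1)}$, which grows exponentially. Hence $\{\beta_k\}$ witnesses that $\Map(V_{1,2})$ is at least exponentially distorted in $\Map(S_{1,2})$. (The lower bound $\|\tilde\psi_*^{\,k}(\alpha)\|_N\succeq\Lambda^k$, together with exponential distortion of the point-pushing subgroup in the mapping class group, may alternatively be invoked from \cite{BFP07}.)

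For $n\ge 3$ one reduces to the case $n=2$. Forgetting $p_3,\dots,p_n$ gives surjective, hence Lipschitz, homomorphisms $\Map(V_{1,n})\to\Map(V_{1,2})$ and $\Map(S_{1,n})\to\Map(S_{1,2})$. Choose a lift $\hat\psi\in\Map(S_{1,n})$ of $\tilde\psi$, choose a loop $\hat\alpha$ based at $p_2$ in $S_{1,n}\setminus\{p_1,p_3,\dots,p_n\}$ lifting $\alpha$, and set $\hat\beta_k:=\hat\psi^k\,\mathcal{P}(\hat\alpha)\,\hat\psi^{-k}=\mathcal{P}(\hat\psi_*^{\,k}(\hat\alpha))$. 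Being a point-pushing map along $p_2$, $\hat\beta_k$ lies in $\Map(V_{1,n})$; and since the forgetful homomorphism intertwines the actions of $\hat\psi$ and $\tilde\psi$ on fundamental groups, $\hat\beta_k$ maps to $\beta_k$. As before $\hat\beta_k$ has $\Map(S_{1,n})$-norm $O(k)$, whereas Lipschitz-ness of the forgetful homomorphism together with the case $n=2$ yields $\|\hat\beta_k\|_{\Map(V_{1,n})}\ge c'\,\|\beta_k\|_{\Map(V_{1,2})}\ge c''\,\Lambda^{k/(D+1)}$, again exponential. This proves the theorem. The step I expect to require the most care is the polynomial distortion bound for $N$ in $\Map(V_{1,2})$: one must correctly identify $\Map(V_{1,2})$ as the semidirect product $N\rtimes_\phi\ZZ$ with $\phi$ unipotent, and then verify that such unipotent twisting produces only polynomial distortion of the fiber; the remaining ingredients are routine bookkeeping with the Birman exact sequence, naturality of the point-pushing map, and the Lipschitz behaviour of the forgetful homomorphisms.
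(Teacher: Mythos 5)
Your argument is correct, but it follows a genuinely different route from the one the paper takes for this theorem. What you do is essentially an elaboration of the sketch given in the introduction for $V_{1,2}$: you identify the pure handlebody group as $F_2\rtimes_\phi\ZZ$ via the Birman sequence over $\Map(V_{1,1})=\langle T\rangle$, observe that the monodromy $\phi$ (a Dehn twist action, hence of linear growth) makes the point-pushing fiber at most polynomially distorted in the handlebody group, and then produce exponential distortion by conjugating a point-push by powers of a lift of an Anosov map, detecting the exponential fiber length already on $H_1$; the cases $n\geq 3$ are then handled by the Lipschitz forgetful homomorphisms $\Map(V_{1,n})\to\Map(V_{1,2})$ and $\Map(S_{1,n})\to\Map(S_{1,2})$. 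The paper's Section~\ref{sec:marked-points} proof instead avoids any identification of the group structure: it uses the Birman sequence whose fiber is the fundamental group of the configuration space of all movable marked points, writes every pure handlebody element as $\mathcal{P}(\gamma)\widetilde{T}^l$, and defines a winding-number function $b$ to $\ZZ$ that changes by a uniformly bounded amount under right multiplication by generators (inequality~(\ref{eq:decrease})); applied to $\mathcal{P}(f^k\beta)$ this gives the exponential lower bound for all $n\geq 2$ at once. Your approach buys a sharper structural statement (polynomial, in fact quadratic, distortion of the point-pushing fiber inside $\Map(V_{1,2})$, in the spirit of the introduction's claim and of \cite{BFP07}), at the cost of needing the explicit semidirect product decomposition, which is special to the solid torus with exactly two marked points, plus the reduction step; the paper's counting argument is less structural but uniform in $n$. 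Two minor points of hygiene: surjectivity of the forgetful homomorphism on handlebody groups is not actually needed for your inequality, only well-definedness (which holds because extendability to $V$ depends only on the boundary homeomorphism) and finite generation; and the precise formula $\ell\mapsto\ell m$ for $\phi$ depends on the choice of lift and basing, but all you use is that $\phi^{\pm 1}$ grows linearly, which is true for any lift since lifts differ by inner automorphisms and powers of the twist.
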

\begin{proof}
  The strategy of this proof is similar to the preceding ones. We
  consider the Birman exact sequence for pure mapping class groups and pure
  handlebody groups. 
  $$\xymatrix{1\ar[r]& \ar[r]^-{\mathcal{P}}
    \pi_1\mathcal{C}_n&P\Map(\partial V,p_0,p_1,\ldots,p_n)\ar[r]&
  \Map(\partial V,p_0)\ar[r]&1\\
  1\ar[r]&\pi_1\mathcal{C}_n\ar[r]\ar[u]_{=}&P\Map(V,p_0,p_1,\ldots,p_n)\ar[r]\ar[u]&
  \ZZ=\left<T\right>\ar[r]\ar[u]&1}$$
  where $\mathcal{C}_n$ denotes the configuration space of $n$ points
  in $\partial V\setminus\{p_0\}$, and $T$ the Dehn twist along the (unique)
  disk $\delta$ on $\partial V\setminus\{p_0\}$. An element of
  $\pi_1\mathcal{C}_n$ can be viewed as an $n$-tuple of parametrized loops
  $\gamma_i$, where $\gamma_i$ is based at $p_i$ (subject to the
  condition that at each point in time, the values of all these loops
  are distinct). Note that the pure mapping class group
  $P\Map(\partial V,p_0,p_1,\ldots,p_n)$ acts on $\mathcal{C}_n$ by acting on
  all component loops.
  The map $\mathcal{P}$ is the generalized point pushing map,
  pushing all marked points simultaneously along the loops
  $\gamma_i$.
  The map $\mathcal{P}$ is natural with respect to the action of
  $P\Map(\partial V,p_0,p_1,\ldots,p_n)$ in the sense that
  $\mathcal{P}(f\gamma)=f\circ\mathcal{P}(\gamma)\circ f^{-1}$.

  Every element of $P\Map(V,p_0,p_1,\ldots,p_n)$ can be 
  written in the form $\mathcal{P}(\gamma)\cdot\widetilde{T}^l$, where
  $\gamma$ denotes an $n$-tuple of loops, and $\widetilde{T}$ is some
  (fixed) lift of the Dehn twist $T$. In this description, the
  multiplicity $l$ and the
  homotopy class of the $n$-tuple of loops $\gamma$ is
  well-defined. Now note that
  \begin{equation}\label{composition}
    \left(\mathcal{P}(\gamma)\cdot\widetilde{T}^l\right)\cdot\left(\mathcal{P}(\gamma')\cdot\widetilde{T}^{l'}\right)
    =
    \mathcal{P}(\gamma)\cdot\mathcal{P}\left(\widetilde{T}^{l'}(\gamma')\right)\widetilde{T}^{l+l'}
  \end{equation}
  \begin{equation*}
    = \mathcal{P}\left(\widetilde{T}^{l'}(\gamma')*\gamma\right)\widetilde{T}^{l+l'}
  \end{equation*}
  by the naturality of $\mathcal{P}$ and the fact that $\mathcal{P}$
  is a homomorphism (note that concatenation of loops is executed
  left-to-right, while composition of maps is done right-to-left).

  Choose an element $\beta \in \pi_1(\partial V, p_0)$ which extends $\delta$
  to a basis of $\pi_1(\partial V, p_0)=F_2$. 
  Note that then $\beta$ is a generator of the fundamental group
  $\pi_1(V, p_0)=\ZZ$ of the solid torus $V_1$.
  We also choose loops $\beta_i \in \pi_1(\partial V,p_i)$ for all
  $i=1,\ldots, n$ which are freely homotopic to $\beta$.
  These loops give an identification of $\pi_1(V,p_i)$ with $\ZZ$.

  Define a map $b:P\Map(V,p_0,p_1,\ldots,p_n) \to \ZZ$ as follows. Let $\varphi = \mathcal{P}(\gamma)\cdot\widetilde{T}^l$ be any
  element of the pure handlebody group. Each component loop $\gamma_i$ of
  $\gamma$ defines a loop in $\pi_1(V,p_i)$
  (which might be trivial).
  This loop is homotopic to the $k_i$-th power of $\beta_i$ for some
  number $k_i$.
  Associate to $\varphi$ the sum of the $k_i$. 

  Now choose any generating set $\gamma^1, \ldots, \gamma^N$ of
  $\pi_1\mathcal{C}_n$. Then the pure handlebody group $P\Map(V,p_0,p_1,\ldots,p_n)$ is
  generated by $\mathcal{P}(\gamma^j)$ and $\widetilde{T}$.
  We claim that there is a constant $k_0$, such that 
  \begin{equation}
    \label{eq:decrease}
     b\left(\varphi\cdot\mathcal{P}(\gamma^i)\right) \geq b(\varphi)-k_0
  \end{equation}
  Namely, by equation~(\ref{composition}), we have to compare the
  projections of the components of
  $$\gamma\quad\quad\mbox{and}\quad\quad\widetilde{T}^{l}(\gamma^j)*\gamma$$
  to each of the $\pi_1(V,p_i)$.
  However, applying $\widetilde{T}$ 
  does not change this projection.
  Since $\gamma^j$ is one of finitely many
  generators, there is a maximal number of occurrences of the
  projection of $\beta_i$
  which can be canceled by adding the projection of $\gamma^j$. This shows inequality~(\ref{eq:decrease}).

  Now we can finish the proof using a similar argument as in the proof of
  Theorem~\ref{distortion-point}. Namely, choose again $f$ a
  pseudo-Anosov element with the property that applying $f$ 
  multiplies the number of occurrences of $\beta_i$ by $2$ in all
  $\pi_1(\partial V,p_i)$. Then 
  $\mathcal{P}(f^k\beta)$ has length growing linearly in the mapping
  class group, while $b(f^k\beta)$ grows exponentially. By
  inequality~(\ref{eq:decrease}) this implies that the word norm in the pure
  handlebody group also grows exponentially. Since the pure handlebody
  group has finite index in the full handlebody group the theorem follows.
\end{proof}
\begin{remark}
  The same argument that extends Theorem~\ref{distortion-point} to
  Corollary~\ref{distortion-spot} applies in this case and shows that
  also all torus handlebody groups with at least two spots or marked
  points are exponentially distorted.
\end{remark}

\section{Handlebodies without marked points}\label{sec:closed-surfaces}
In this section we complete the proof of the exponential lower bound
on the distortion of the handlebody groups by showing
that the handlebody group of a
handlebody of genus $g \geq 2$ without marked points or spots is 
distorted in the mapping class group.

For genus $g\geq 3$, the idea is to replace the point pushing used
in the proofs above by pushing a subsurface around the handlebody. The
resulting handlebody group element does not induce a conjugation on
$\pi_1(V, p)$, but instead induces a partial conjugation on the
fundamental group of the complement of the pushed subsurface. Since $g\geq 3$, 
such an element projects to a nontrivial element in the outer automorphism group of $F_g$.
Then a similar reasoning as in Section~\ref{sec:marked-points} applies. The case of genus $2$
requires a different argument and will be given at the end of this section.
\begin{theorem}\label{distortion-closed}
  For a handlebody $V=V_g$ of genus $g \geq 3$, the handlebody group
  $\Map(V)$ is at least 
  exponentially distorted in the mapping class group $\Map(\partial V)$.
\end{theorem}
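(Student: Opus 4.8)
The plan is to imitate the strategy of Section~\ref{sec:marked-points}, replacing the point-pushing map by a \emph{subsurface-pushing map} (sometimes called a disk-pushing or handle-pushing map). Concretely, I would choose an embedded bordered subsurface $W\subset S=\partial V$ together with a product neighborhood $W\times[0,1]$, sitting inside $V$ in such a way that $W$ may be isotoped ``through'' one of the handles of $V$ and brought back to itself. This gives a generalized Birman exact sequence
$$\xymatrix{1\ar[r] & \pi_1(\mathrm{Diff}(V\setminus W))\ar[r] & \Map(V,W)\ar[r] & \Map(V\setminus W)\ar[r] & 1}$$
(or more elementarily, a subgroup of $\Map(V)$ obtained by the push along loops in the relevant configuration space), where the key point is that the kernel contains ``push $W$ once around a loop in $V$'' and that these pushing maps lie in the handlebody group. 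Here it is essential that $g\ge 3$, so that there is enough room: after removing $W$ the complementary free subgroup of $\pi_1(V)$ is still of rank at least $2$, which is precisely what makes the projection to $\Out(F_g)$ nontrivial.

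Next I would set up the algebra exactly as before. Removing $W$ from $V$ and looking at $\pi_1$, the subsurface-pushing map along a loop $\gamma$ acts on $\pi_1(V\setminus W,p)$ (equivalently on $\pi_1(V,p)=F_g$, after choosing a free splitting adapted to $W$) as a \emph{partial conjugation}: it conjugates the free factor coming from $\pi_1(W)$ by the element $[\gamma]$ while fixing a complementary free factor. I would fix a free splitting $F_g = A * B$ with $A$ corresponding to (the image of) $\pi_1(W)$ and $B$ of rank $\ge 1$ coming from the complement, choose a pseudo-Anosov (or merely suitably chosen) mapping class $f\in\Map(V\setminus W)$, lifted to $\Map(V,W)$, whose action on $\pi_1$ exponentially expands the relevant conjugating element (e.g. $P(f^k\gamma) = A_0^{N_k}$ up to the free splitting, with $N_k\ge 2^k$ for a primitive $A_0\in A$), and define $\Phi_k = \widetilde f^{\,k}\,\mathcal{P}(\gamma)\,\widetilde f^{-k}$. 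As in Theorem~\ref{distortion-point}, $\|\Phi_k\|_{\Map(\partial V)}$ grows linearly because it is an iterated conjugate of a fixed element by $f$.

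For the lower bound I would project to $\Out(F_g)$ via $\Map(V)\to\Out(\pi_1(V))=\Out(F_g)$. The image of $\Phi_k$ is the outer class of the $N_k$-th power of a fixed partial conjugation $c$ (conjugate $A$-factor by $A_0$, fix $B$). The crucial algebraic input is that this partial conjugation has \emph{infinite order} in $\Out(F_g)$ --- this is exactly where $\mathrm{rank}(B)\ge 1$ together with $A_0$ being of infinite order in $F_g$ is used, so that $c^{N_k}$ is not inner for $N_k>0$ (an inner automorphism conjugates everything simultaneously, a partial conjugation with nontrivial complement does not). Then by \cite[Theorem 1.1]{A02} infinite-order elements of $\Out(F_g)$ have positive translation length, so $\|\Phi_k\|_{\Out(F_g)}$ grows exponentially in $k$; since $\Map(V)\to\Out(F_g)$ is a surjection of finitely generated groups, hence Lipschitz, $\|\Phi_k\|_{\Map(V)}$ grows exponentially, proving the theorem.

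The main obstacle, and the step requiring care rather than the routine imitation, is the geometric construction of the subsurface-pushing maps and the verification that they genuinely lie in the handlebody group and induce the claimed partial conjugation on $\pi_1(V)$. One must choose $W$ and its embedding into $V$ so that the ambient isotopy of $S$ pushing $W$ around a loop in $V$ extends to an isotopy of $V$; the cleanest way is to take $W$ to be a regular neighborhood in $S$ of a non-separating simple closed curve that bounds a disk in $V$ together with a small handle, so that $W\times[0,1]$ embeds in $V$ disjointly from (most of) a disk system, and then track how dragging $W$ once around a core loop of a handle changes the based loops of $\pi_1(V)$. Getting the bookkeeping of the free splitting $F_g=A*B$ right --- in particular ensuring $B\ne 1$, which forces $g\ge 3$ --- is the heart of the matter; the rest is parallel to Section~\ref{sec:marked-points}.
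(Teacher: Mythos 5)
Your proposal follows the paper's own strategy for $g\ge 3$ essentially verbatim: a push of a subsurface acting as an $N_k$-th power of a partial conjugation of $F_g$, which is nontrivial of infinite order in $\Out(F_g)$ precisely because the complementary free factor is nontrivial (this is where $g\ge 3$ enters), combined with Alibegovi\'c's positive translation length and the Lipschitz projection $\Map(V)\to\Out(F_g)$, played against the linear growth of $\widetilde f^{\,k}\widetilde P\widetilde f^{\,-k}$ in $\Map(\partial V)$. The one step you flag as the remaining obstacle --- constructing the pushing maps and verifying that they lie in the handlebody group and act as claimed --- is disposed of in the paper by cutting along a separating curve $\delta$ bounding a disk $\D$ so that $V$ is the union of a once-spotted genus $2$ handlebody $V_1$ and a once-spotted genus $g-2$ handlebody $V_2$, and then reusing the elements $\widetilde\Phi_k\in\Map(V_1,\D)$ already built in Corollary~\ref{distortion-spot}, extended by the identity on the other side; their image in $\mathrm{Aut}(F_2)\times\mathrm{Aut}(F_{g-2})$ is the $N_k$-th power of a conjugation of the $F_2$ factor by an element of that factor and the identity on $F_{g-2}$ (your bookkeeping of which factor is conjugated differs from this only by an inner automorphism, so nothing changes in $\Out(F_g)$), so no new Birman-type exact sequence needs to be set up.
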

\begin{proof}
  Choose a curve $\delta$ which bounds a disk $\D$, such that
  $V\setminus\D$ is the union of a once-spotted genus 2 handlebody
  $V_1$ and a once-spotted genus $g-2$ handlebody $V_2$.
  Denote the boundary of $V_i$ by $S_i$, and choose a basepoint $p \in
  \delta$. This defines a free decomposition of the fundamental group
  of the handlebody 
  $$F_g = \pi_1(V,p) = \pi_1(V_1,p) * \pi_1(V_2,p) = F_2 * F_{g-2}.$$
  We denote by $\Map(S_i, \delta)$ the mapping class group of
  the bordered surface $S_i$, emphasizing that each such mapping class
  has to fix $\delta$ pointwise. 
  The stabilizer of $\delta$ in the mapping class group of
  $S$ is of the form
  $$G_S = \left.\Map(S_1, \delta) \times \Map(S_2, \delta)\right/\thicksim$$
  where the equivalence relation $\thicksim$ identifies the Dehn twist
  about $\delta$ in $\Map(S_1, \delta)$ and $\Map(S_2,
  \delta)$. 
  Note that the Dehn twist about $\delta$ lies in the handlebody group and
  acts trivially on $\pi_1(V,p)$.
  Therefore, the stabilizer of $\delta$ in the handlebody
  group is of the form
  $$G_V = \left.\Map(V_1, \D) \times \Map(V_2, \D)\right/\thicksim.$$
  In particular, the handlebody group $\Map(V_1, \D)$
  injects into $G_V$. 
  There is a homomorphism $G_V \to \mathrm{Aut}(F_2) \times
    \mathrm{Aut}(F_{g-2})$ induced by the actions of $\Map(V_i,p)$ on
    $\pi_1(V_i,p)$.  
  This homomorphism is natural with respect to the inclusion
  $\mathrm{Aut}(F_2) \times \mathrm{Aut}(F_{g-2}) \to
  \mathrm{Aut}(F_g)$ defined by the free decomposition of
  $\pi_1(V,p)$ given above. It is also natural with respect to the inclusion 
  $\mathrm{Aut}(F_2)\to\mathrm{Aut}(F_2) \times \mathrm{Aut}(F_{g-2})$
  defined by
  $\Map(V_1,\D)\to G_V$.
  Summarizing, we have the following commutative diagram.
  $$\xymatrix{
    \ar[r] \Map(S_1, \delta) & \ar[r] G_S & \ar[r] \Map(S, p) & \Map(S) \\
    \ar[r]\ar[u]\ar[d] \Map(V_1, \D) & \ar[r]\ar[u]\ar[d] G_V & \ar[r]\ar[u]\ar[d] \Map(V, p) & \ar[u]\ar[d]\Map(V)
    \\
    \mathrm{Aut}(F_2)\ar[r] & \ar[r] \mathrm{Aut}(F_2) \times \mathrm{Aut}(F_{g-2}) & \ar[r]\mathrm{Aut}(F_g) & \mathrm{Out}(F_g)
    }$$
    Let $\widetilde{\Phi}_k \in \Map(V_1, \D)$ be the elements
    constructed in the proof of Corollary~\ref{distortion-spot}.
    The image of $\widetilde{\Phi}_k$ in
    $\mathrm{Aut}(F_2) \times \mathrm{Aut}(F_{g-2})$ is the
    $N_k$--th power of a conjugation in the
    free factor $F_2$ defined by $V_1$, and the identity on the
    free factor $F_{g-2}$ defined by $V_2$, where $N_k \geq 2^k$.
    In other words, this projection is a $N_k$--th iterate of a
    partial conjugation. Therefore, it projects to a
    nontrivial element of infinite order in $\mathrm{Out}(F_g)$. From there,
    one can finish the proof using the argument in the proof of Theorem~\ref{distortion-point}.
\end{proof}

The last case is that of a genus $2$ handlebody without marked points
or spots. In this case, the strategy is to use the distortion
of the handlebody group of a solid torus with two spots to produce
distorted elements in the stabilizer of a nonseparating disk in the genus $2$
handlebody. 

To make this precise, we use the following construction. Let $V$ be a
genus $2$ handlebody and $S$ its boundary surface. Choose a
nonseparating essential simple closed curve $\delta$ that bounds a
disk $\D$ in $V$. 
Cutting $S$ at $\delta$ yields a torus $S^2_1$ with two
boundary components $\delta_1$ and $\delta_2$. 
Choose once and for all a continuous map $S^2_1 \to S$
which maps both $\delta_1$ and $\delta_2$ to $\delta$ and which
restricts to a homeomorphism
$$S^2_1\setminus(\delta_1\union\delta_2) \to S\setminus\delta.$$
The isotopy class of such a map depends on choices, but we fix
one such map for the rest of this section. This map induces induces a homomorphism
$$\Map(S^2_1) \to \mathrm{Stab}_{\Map(S)}(\delta)$$
since the homeomorphisms and isotopies used to define the mapping
class group $\Map(S^2_1)$  of the torus $S_1^2$ have to fix $\delta_1$
and $\delta_2$ pointwise and therefore extend to $S$.

Since $\delta$ bounds a disk, an analogous construction
works for the handlebody groups, and we obtain
$$\Map(V^2_1) \to \mathrm{Stab}_{\Map(V)}(\D).$$
Let $p\in\delta$ be a base point, and let $a,b$ be smooth embedded loops in $S$  
with the following properties (compare
Figure~\ref{loops}). 
\begin{figure}[h!]
  \centering
  \includegraphics[width=0.7\textwidth]{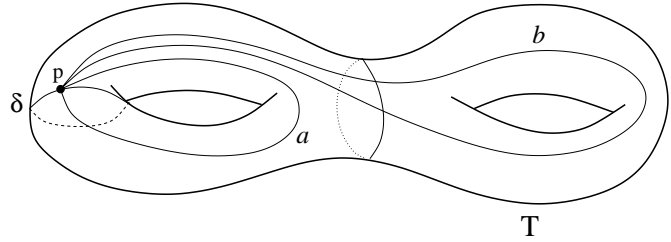}
  \caption{The setting for a genus $2$ handlebody. }
  \label{loops}
\end{figure}
\begin{enumerate}[i)]
\item The projections $A$ and $B$ of $a$ and $b$ to $\pi_1(V,p)$ form
  a free basis of $\pi_1(V,p)=F_2$.
\item The loops $a$ and $b$ intersect
$\delta$ exactly in the basepoint $p$.
\item The loop $a$ hits $\delta$ from different sides
at its endpoints, while $b$ returns to the same side.
\end{enumerate}
On the surface $S_1^2$ obtained by cutting $S$ at $\delta$, the loop $a$
defines an arc from one boundary component to the other, while
$b$ defines a loop. By slight abuse of notation we
will denote these objects by the same symbols. We choose the initial
point of the loop $b$ as base point of this cut-open surface, and call
it again $p$. Then the projection $B$ of $b$ to the spotted solid
torus $V_1^2$ is a generator of its fundamental group $\pi_1(V_1^2,p)=\ZZ$.

Now consider the torus $T'\subset S$ with one boundary component 
obtained as the tubular neighborhood of $a\union\delta$ in $S$ (compare Figure~\ref{loops}
for the situation). The complement of $T'$ in $S$ again is a torus with one boundary
component which we denote by $T$. Choose a reducible homeomorphism $f$
of $V_1^2$ which preserves $T$ and restricts to a pseudo-Anosov
homeomorphism $f$ on the torus $T \subset S$ 
with the property that the projection of the loop
$f^k(b)$ to $\pi_1(V^2_1)$ is $B^{N_k}$, for $N_k \geq 2^k$.
Such an element can be constructed explicitly as in the
proof of Theorem~\ref{distortion-spot}.
In particular, we may assume that $f$ fixes the arc $a$ pointwise.

Consider now as in the proof of Theorem~\ref{distortion-spot} the map
that collapses the boundary components of $V_1^2$ to marked points.
On this solid torus $V_{1,2}$ with two marked points, $a$ defines an
arc from marked point to marked 
point, and $b$ defines a based loop at one of the marked points which
we again use as base point for this surface. 
Let $P=\mathcal{P}(b)$ be the point pushing map on $V_{1,2}$ defined by
$b$, and let $\widetilde{P}$ be any lift of this point-pushing map to
the surface $S_1^2$ with boundary. As before, $\widetilde{P}$ is an
element of the handlebody group. We define
$$\Phi_k = f^k\circ\widetilde{P}\circ f^{-k}.$$
\begin{lemma}
  $\Phi_k$ is an element of the handlebody group of $V_1^2$. 
  $\Phi_k(B)$ is homotopic to $B$ as a loop based at $p$ in the
  handlebody $V_1^2$, and $\Phi_k(A)$ is homotopic, as an arc relative
  to its endpoints, to $A*B^{N_k}$ in $V_1^2$.
\end{lemma}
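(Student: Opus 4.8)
The plan is the following. That $\Phi_k$ lies in the handlebody group of $V_1^2$ requires no computation: $f\in\Map(V_1^2)$ by construction, $\widetilde{P}\in\Map(V_1^2)$ since it is a lift of the point pushing map $\mathcal{P}(b)$, which lies in the handlebody group of $V_{1,2}$, and hence the conjugate $\Phi_k=f^k\widetilde{P}f^{-k}$ is again in $\Map(V_1^2)$. All of the content of the lemma lies in the computation of the action of $\Phi_k$ on the fundamental groupoid of $V_1^2$ with respect to the two endpoints $p,q$ of the arc $a$, and for this I would use exactly three inputs recorded in the construction above: (i) $f$, and hence $f^{-k}$, fixes the arc $a$ pointwise; (ii) the image of $f^k(b)$ under the projection $P\colon\pi_1(S_1^2)\to\pi_1(V_1^2)=\ZZ$ is $B^{N_k}$; and (iii) $\widetilde{P}$ acts, up to the ambiguity of the lift, as the push of $p$ along $b$.

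First I would dispatch the statement about $B$. By naturality of the point pushing map with respect to $P$, together with Lemma~\ref{point-pushing-action}, $\widetilde{P}$ acts on $\pi_1(V_1^2,p)$ as conjugation by $P(b)=B$, which is the identity because $\pi_1(V_1^2)=\ZZ$ is abelian. Writing $f_*$ for the automorphism of $\pi_1(V_1^2)$ induced by the extension of $f$ over $V_1^2$ (compatible with $P$), this yields
\[
P\bigl(\Phi_k(b)\bigr)=f^k_*\widetilde{P}_*f^{-k}_*(B)=f^k_*f^{-k}_*(B)=B,
\]
so $\Phi_k(B)$ is homotopic to $B$ as a based loop in $V_1^2$.

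Next I would treat the arc. The analogue of Lemma~\ref{point-pushing-action} for arcs (proved by the same isotopy argument) says that pushing $p$ along $b$ sends the arc $a$ from $p$ to $q$ to the arc $b*a$, up to homotopy relative to the endpoints (and up to the ambiguity of the lift). Since $f^{-k}$ fixes $a$ pointwise, $\Phi_k(a)=f^k(\widetilde{P}(a))$ is rel-endpoint homotopic to $f^k(b*a)=f^k(b)*f^k(a)=f^k(b)*a$. Applying $P$ and using (ii), the class of $\Phi_k(a)$ in the fundamental groupoid of $V_1^2$ equals $B^{N_k}*A$; changing the basepoint of the $B^{N_k}$ factor along $A$---which is harmless since $\pi_1(V_1^2)\cong\ZZ$ and the change-of-basepoint isomorphism identifies the chosen generators at $p$ and $q$---rewrites this arc class as $A*B^{N_k}$, as claimed.

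The step I expect to require the most care, and the only genuine subtlety, is verifying that the ambiguity in the lift $\widetilde{P}$---a product of powers of the Dehn twists $T_{\delta_1},T_{\delta_2}$ about the two boundary curves of $S_1^2$---is invisible in both computations. This will follow because $\delta_1$ and $\delta_2$ bound the spot disks of $V_1^2$ and are hence null-homotopic there, so the twists $T_{\delta_i}$ act trivially on $\pi_1(V_1^2)$ and on the fundamental groupoid of $V_1^2$ relative to $\{p,q\}$. A secondary, purely bookkeeping point is to fix the orientation conventions---the orientation of $b$ and the direction of the pushing isotopy---so that the exponent comes out positive, consistently with $N_k\geq 2^k>0$; with the conventions of Lemma~\ref{point-pushing-action} this is automatic. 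Everything else is a routine unwinding of the identities above.
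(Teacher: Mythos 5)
Your handling of the third claim is essentially the paper's, but there is a genuine gap at the foundation of the first two: you assume that $f$ lies in the handlebody group of $V_1^2$ (``$f\in\Map(V_1^2)$ by construction'') and even invoke ``the automorphism of $\pi_1(V_1^2)$ induced by the extension of $f$ over $V_1^2$''. No such extension exists. A homeomorphism of the spotted solid torus fixing $p$ induces $\pm\mathrm{id}$ on $\pi_1(V_1^2,p)\cong\ZZ$, so compatibility with $P$ would force $P(f^k(b))=B^{\pm1}$, contradicting the defining property $P(f^k(b))=B^{N_k}$ with $N_k\geq 2^k$. (The paper's phrase ``reducible homeomorphism $f$ of $V_1^2$'' invites this misreading; $f$ is a mapping class of the boundary surface $S_1^2$, the analogue of the element $f$ in the proof of Theorem~\ref{distortion-point}, which is not a handlebody element.) Indeed the whole construction hinges on $f\notin\Map(V_1^2)$: if $f$ were in the handlebody group, $\Phi_k=f^k\circ\widetilde{P}\circ f^{-k}$ would have word norm growing linearly in $\Map(V_1^2)$ and no distortion could be extracted. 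Consequently your proof of the first claim collapses, and the displayed identity $P(\Phi_k(b))=f_*^k\widetilde{P}_*f_*^{-k}(B)$ is unjustified as written.

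Both points can be repaired, but by a different route, which is the one the paper takes. For membership: under the map collapsing the two spots to marked points, $\Phi_k$ projects, by naturality (Equation~(\ref{naturality})), to $\mathcal{P}(f^k(b))$, a point-pushing map on $V_{1,2}$; point pushes lie in the handlebody group, and lifts of handlebody elements are again handlebody elements (see the discussion in the proof of Corollary~\ref{distortion-spot}). For the second claim, no induced map on $\pi_1(V_1^2)$ is needed: by Lemma~\ref{point-pushing-action} the projection of $\Phi_k$ acts on the fundamental group as conjugation by $B^{N_k}$, which is trivial since $\pi_1(V_1^2)\cong\ZZ$ is abelian; equivalently, compute $\Phi_k(b)=f^k(b)*b*f^k(b)^{-1}$ in $\pi_1(S_1^2,p)$ and then apply $P$. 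Your argument for the arc $a$ does not use the spurious extension and is sound in outline, though you assert the formula $a\mapsto b*a$ as an unproved ``arc analogue'' of Lemma~\ref{point-pushing-action}; the paper devotes most of its proof to exactly this step, using that $f^k(b)$ is simple and meets $a$ only at $p$ to build an explicit model of the push in a tubular neighborhood (this is also where the orientation ambiguity is resolved). Your observation that the lift ambiguity consists of twists about the spot boundaries, which die in $\pi_1(V_1^2)$, is correct and adequate.
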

\begin{proof}
  $\Phi_k$ projects to the point-pushing map along $f^k(b)$ on the
  solid torus with two marked points $V_{1,2}$ obtained by collapsing the
  boundary components of $V_1^2$. Hence, $\Phi_k$ is the lift of a
  handlebody group element and therefore lies in the handlebody group itself
  (see the discussion in the proof of
  Theorem~\ref{distortion-spot}). 
  This yields the first claim. 

  To see the other claims, we can work in the solid torus
  $V_{1,2}$ with two marked points, as the projection from
  $V_1^2$ to $V_{1,2}$ that collapses the spots to marked points
  induces a isomorphism on fundamental groups.

  Here by construction $\Phi_k$ projects to the point-pushing map
  along $f^k(b)$. Lemma~\ref{point-pushing-action} now
  implies that this projection acts as conjugation by $B^{N_k}$ on
  the fundamental group,
  giving the second claim.

  By construction of $f$, the arc $a$ and the loop $b_k=f^k(b)$ only
  intersect at the base point.
  The loop $b_k$ is a simple curve and thus there is an embedded tubular
  neighborhood of $b_k$ on $V_{1,2}$ which is orientation preserving
  homeomorphic to $[0,1]/(0\thicksim 1)\times[-1,1]=S^1\times[-1,1]$ and such that
  $S^1\times\{0\}$ is the loop $b_k$. After perhaps reversing the
  orientation of $b_k$ and performing an isotopy, we may assume that the
  intersection of $a$ with this tubular neighborhood equals
  $\{0\}\times[-1,0]$.

  Since $b_k$ is simple, the point pushing map along $b_k$ is isotopic
  to the map supported on the tubular neighborhood which is defined by
  \begin{eqnarray*}
    (x,t) \mapsto (x+(t+1), t)  &\mbox{for}& t\in[-1,0]\\
    (x,t) \mapsto (x-t, t)&\mbox{for}& t\in[0,1]
  \end{eqnarray*}

  This implies that the point pushing map acts on the homotopy class
  of $a$ by concatenating $a$ with the loop $f^k(b)$ (up to possibly
  changing the orientation of $a$). Since $f^k(b)$ projects
  to $B^{N_k}$ in the handlebody, this implies the last claim of the lemma.
\end{proof}
\begin{theorem}
  The handlebody group of a genus $2$ handlebody is at least
  exponentially distorted.
\end{theorem}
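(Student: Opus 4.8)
The plan is to transplant the exponential distortion already established for the solid torus $V_{1,2}$ with two marked points (equivalently $V_1^2$ with two spots) into the genus $2$ handlebody via the embedding $\Map(V_1^2)\to\mathrm{Stab}_{\Map(V)}(\D)$ fixed above, and to detect the distortion by means of the action on $\pi_1(V,p)=F_2$. Concretely, I would consider the elements $\Phi_k = f^k\circ\widetilde P\circ f^{-k}\in\Map(V_1^2)$ constructed before the statement, viewed now as elements of the genus $2$ handlebody group $\Map(V)$. On the ambient side, $f$ restricts to a pseudo-Anosov on the torus $T$ and is the identity near $\delta$, so $f$ has linear growth in $\Map(\partial V)$; since $\widetilde P$ is a single point-pushing map of bounded norm, the norm of $\Phi_k$ in $\Map(\partial V)$ grows at most linearly in $k$. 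The task is then to show the norm of $\Phi_k$ in $\Map(V)$ grows exponentially.

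For the lower bound in $\Map(V)$ I would use the homomorphism $\pi\colon\Map(V,p)\to\mathrm{Aut}(F_2)$ given by the action on $\pi_1(V,p)$, and the fact (used already in the proof of Theorem~\ref{distortion-point}, via \cite{A02}) that infinite order elements of $\mathrm{Aut}(F_2)$ have positive translation length, so it suffices to show $\pi(\Phi_k)$ is the $N_k$-th power of a fixed infinite order element, with $N_k\geq 2^k$. Here the Lemma just stated does the work: on $V_1^2$ we have $\Phi_k(B)\simeq B$ and $\Phi_k(A)\simeq A*B^{N_k}$ (as an arc rel endpoints). Under the chosen collapsing map $S_1^2\setminus(\delta_1\cup\delta_2)\to S\setminus\delta$ and the induced identification of fundamental groups, the loop $b$ still projects to the generator $B$ of $\pi_1(V,p)$ corresponding to the solid torus factor, and the arc $a$ closes up (using property (iii), that $a$ hits $\delta$ from different sides) to a loop whose class is the second free generator $A$. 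Thus in $\mathrm{Aut}(F_2)$ the automorphism $\pi(\Phi_k)$ fixes $B$ and sends $A\mapsto AB^{N_k}$; this is exactly the $N_k$-th power of the transvection $\tau\colon A\mapsto AB,\ B\mapsto B$, which has infinite order in $\mathrm{Aut}(F_2)$. Therefore the word norm of $\pi(\Phi_k)$ in $\mathrm{Aut}(F_2)$ grows linearly in $N_k$, hence exponentially in $k$; since $\pi$ is a homomorphism between finitely generated groups and hence Lipschitz, the word norm of $\Phi_k$ in $\Map(V)$ grows exponentially in $k$. Combined with the linear upper bound in $\Map(\partial V)$, this proves the handlebody group of $V_2$ is at least exponentially distorted.

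The main obstacle is bookkeeping rather than a deep new idea: one must verify carefully that the extension homomorphism $\Map(V_1^2)\to\mathrm{Stab}_{\Map(V)}(\D)$ is compatible with the identification $\pi_1(V,p)=\pi_1(V_1^2,p)/(\text{arc }a\ \text{closed up})$, i.e.\ that the loop class of the closed-up arc $a$ together with $B$ really is a free basis of $F_2$ (this is property (i)), and that the boundary-fixing condition used to define the extension does not spoil this. One also has to note that $\Phi_k$ genuinely fixes $p$ on $\partial V$ (it is supported away from $\delta$ up to the point-push, and the point-push along $b$ can be taken to fix $p$), so that $\pi(\Phi_k)\in\mathrm{Aut}(F_2)$ is well defined rather than only defined up to inner automorphism; alternatively one projects to $\mathrm{Out}(F_2)$ and observes that the image of $\tau$ still has infinite order there, which suffices by the same translation-length argument. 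With these compatibilities in hand the argument is a direct analogue of the proofs of Theorems~\ref{distortion-point} and \ref{distortion-closed}.
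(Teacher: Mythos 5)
Your proposal is correct and follows essentially the same route as the paper: you pass the elements $\Phi_k$ into the stabilizer of $\D$ in $\Map(V)$, use the preceding lemma to compute that the induced action on $\pi_1(V,p)=F_2$ is the $N_k$-th power of the Nielsen transvection $A\mapsto AB$, $B\mapsto B$, and conclude exponential growth of the word norm in the handlebody group via positive translation lengths in $\mathrm{Aut}(F_2)$ (or $\Out(F_2)$), against a linear bound in $\Map(\partial V)$. Your extra remarks on basepoint and compatibility issues are exactly the bookkeeping the paper handles by noting that $\Psi_k$ fixes $\delta$ pointwise.
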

\begin{proof}
  We use the notation from the construction described above. Consider
  the image $\Psi_k$ of $\Phi_k$ in the stabilizer of $\D$ in
  the handlebody group $\Map(V_2)$. By construction, $\Psi_k$ fixes the
  curve $\delta$ pointwise and therefore acts on
  $\pi_1(V,p)$. By the preceding lemma, this action is given by
  \begin{eqnarray*}
    A &\mapsto& A*B^{N_k}\\
    B &\mapsto& B
  \end{eqnarray*}
  Therefore, $\Psi_k$ acts as the $N_k$-th power of a simple Nielsen
  twist on $F_2$. In particular, it projects to the $N_k$-th power of
  a nontrivial element in $\Out(F_2)$. From here, one can finish the
  proof as for the preceding distortion theorems.
\end{proof}

\section{Disk exchanges and surgery paths}
\label{sec:disk-systems}
In this section we study disk systems in handlebodies and
introduce certain types of surgery operations for disk systems. 
These surgery operations form the basis for the construction of
distinguished paths in the handlebody group (see
Lemma~\ref{lemma:inductive-distance-control}).

In the sequel we always consider a handlebody $V$
of genus $g\geq 2$ with a finite
number $m$ of marked points on its boundary $\partial V$.
The discussion remains valid if some of the marked points are replaced
by spots.

\begin{defi}
A \emph{disk system} for $V$ is a set of essential disks in $V$ which
are pairwise disjoint and non-homotopic.
A disk system is called
\emph{simple} if all of its complementary components are simply
connected. It is called \emph{reduced} if it is simple and has a
single complementary component.   
\end{defi}
We usually consider disk systems only up to isotopy.
For a handlebody of genus $g$, a
reduced disk system consists of precisely $g$ non-separating
disks. The complement of a reduced disk system in $V$ is a ball with
$2g$ spots (and possibly some marked points). 
The boundary of a reduced disk system
is a multicurve in $\partial V$ with $g$ components which 
cuts $\partial V$ into a $2g$-holed sphere (with some number of marked
points). The handlebody group
acts transitively on the set of isotopy classes of reduced disk systems.

We say that two disk systems $\D_1,\D_2$ are in 
\emph{minimal position} if their boundary multicurves 
intersect in the minimal number of points and if
every component of $\D_1\cap
\D_2$ is an embedded arc
in $\D_1\cap \D_2$ 
with endpoints in $\partial \D_1
\cap \partial \D_2$.
Disk systems can always be put in minimal position by applying
suitable isotopies. 
In the sequel we always assume that disk systems are in minimal position.

Note that the minimal position of disks behaves differently than the
normal position of sphere systems as defined in
\cite{Ha95}. Explicitly, let $\Sigma$ be a reduced disk system and $D$
an arbitrary disk. Suppose $D$ is in minimal position with respect to $\Sigma$.
Then a component of $D\setminus\Sigma$ may have several boundary
components on the same side of a disk in $\Sigma$. In addition, the
collection of components of $D\setminus\Sigma$ does not determine the
disk $D$ uniquely. 

Let $\D$ be a disk system. An \emph{arc relative to
  $\D$} is a continuous embedding $\rho:[0,1]\to\partial V$ such
that its endpoints $\rho(0)$ and $\rho(1)$ are contained in $\partial\D$.
An arc $\rho$ is called \emph{essential} if
it cannot be homotoped into $\partial \D$ with fixed endpoints and if
the number of intersections of $\rho$ with $\partial \D$ is minimal in
its isotopy class.

Choose an orientation of the curves in $\partial \D$. 
Since $\partial V$ is oriented, this choice determines a left and a
right side of a component $\alpha$ of $\partial \D$ in a small annular
neighborhood of 
$\alpha$ in $\partial V$. We then say that an endpoint $\rho(0)$ (or
$\rho(1)$) of an arc $\rho$ \emph{lies to the right (or to the left)
  of $\alpha$}, if a small neighborhood 
$\rho([0,\epsilon])$ (or $\rho([1-\epsilon,1])$) of this
endpoint is contained in the right (or left) side of $\alpha$ in a
small annulus around $\alpha$.
A \emph{returning arc relative to $\D$} is an arc both of whose
endpoints lie on the same side of some boundary $\partial D$ of a disk $D$
in $\D$, and whose interior is disjoint from $\partial \D$.

Let $E$ be a disk which is not disjoint from $\D$. 
An \emph{outermost arc} of $\partial E$ relative to $\D$ is
a returning arc $\rho$ relative to $\D$ such that
there is a component $E^\prime$ of $E\setminus\D$ whose boundary is
composed of $\rho$ 
and an arc $\beta\subset D$. The interior of $\beta$
is contained in the interior of $D$. We call such a disk $E^\prime$
an \emph{outermost component} of $E\setminus\D$.

For every disk $E$ which is not disjoint
from $\D$ there are 
at least two distinct outermost components $E^\prime,E^{\prime\prime}$
of $E\setminus\D$. Every outermost arc of a disk is a returning
arc. However, there may also be components of $\partial E\setminus\D$ which
are returning arcs, but not outermost arcs. 
For example, a component of $E\setminus \D$ may be a rectangle bounded by two 
arcs contained in $\D$ and two subarcs of $\partial E$ with endpoints
on $\partial \D$ which are homotopic to a returning arc relative to
$\partial \D$.   

Let now $\D$ be a simple disk system and let $\rho$ be a
returning arc whose endpoints are contained in the boundary of some
disk $D \in \D$. 
Then $\partial D \setminus \{\rho(0),\rho(1)\}$ is the union of
two (open) intervals $\gamma_1$ and $\gamma_2$. Put $\alpha_i =
\gamma_i\union\rho$. Up to isotopy, $\alpha_1$ and $\alpha_2$ are
simple closed curves which are disjoint from $\D$ (compare
\cite{St02} and \cite{M86} for this construction). Therefore
both $\alpha_1$ and $\alpha_2$ bound disks in the handlebody which we
denote by $Q_1$ and $Q_2$. We say that $Q_1$ and $Q_2$ are obtained
from $D$ by \emph{simple surgery along the returning arc $\rho$}.

The following observation is well-known (compare
\cite[Lemma~3.2]{M86}, or \cite{St02}). 
\begin{lemma}\label{lemma:unique-reduced-exchange}
If $\Sigma$ is a reduced disk system and $\rho$ is a returning arc
with endpoints on $D \in \Sigma$, then for exactly one choice of the disks
$Q_1,Q_2$ defined as above, say the disk $Q_1$, the disk system
obtained from $\Sigma$ by replacing $D$ by $Q_1$ is reduced.
\end{lemma}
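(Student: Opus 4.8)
The plan is to analyze the effect of the surgery on the complement of the disk system directly, thinking of $\Sigma$ as cutting $V$ into a ball and tracking how the returning arc sits relative to the two copies of $D$ on the boundary sphere of that ball. First I would recall that since $\Sigma=\{D=D_1,D_2,\dots,D_g\}$ is reduced, the complement $V\setminus\Sigma$ is a single ball $B$, whose boundary sphere contains $2g$ spots, two of them, say $D^+$ and $D^-$, being the two copies of $D$. The returning arc $\rho$, having both endpoints on the same side of $\partial D$ and interior disjoint from $\partial\Sigma$, lifts to an arc $\tilde\rho$ in $\partial B$ with both endpoints on the \emph{same} copy, say $D^-$, of $D$ (this is exactly what ``same side'' encodes). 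The two curves $\alpha_i=\gamma_i\cup\rho$ each bound disks $Q_i$; after the surgery, replacing $D$ by $Q_i$ in $\Sigma$ has the effect of cutting $B$ further along a properly embedded disk $\tilde Q_i\subset B$ and then regluing, and I want to see which choice of $i$ yields a complement that is again a single ball.

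Next I would make the cut-and-paste picture precise. The arc $\tilde\rho$, together with one of the two subarcs $\gamma_i$ of $\partial D^-$, bounds a subdisk of the boundary sphere $\partial B$; since $D^+$ is a separate spot on that sphere, it lies on exactly one of the two sides of the curve $\tilde\alpha_i$ on $\partial B$. Reforming the disk system by replacing $D$ with $Q_i$ amounts to re-cutting $B$ along the meridian disk $\tilde Q_i$ bounded by $\tilde\alpha_i$: the new complement is obtained from $B$ by cutting along $\tilde Q_i$, which produces two balls, and then regluing the copy $D^+$ to the appropriate copy of $\tilde Q_i$. The key observation is that the result is connected (hence a single ball, hence reduced) precisely when $D^+$ and the gluing copies of $\tilde Q_i$ land in different components of $B\setminus \tilde Q_i$, and this happens for exactly one of the two indices $i$, because $D^+$ is on exactly one side of $\tilde\alpha_i$. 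For the other index the regluing reconnects within the same piece, leaving two complementary components, so that disk system is simple but not reduced. I would also note $Q_1,Q_2$ are automatically essential and pairwise non-homotopic to the surviving $D_j$, and that simplicity is preserved because $\alpha_i$ is disjoint from $\Sigma$, so the only thing at stake is the number of complementary components.

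The main obstacle I anticipate is bookkeeping the gluing pattern carefully enough to be sure that ``$D^+$ on one side of $\tilde\alpha_i$'' really does translate into ``connected complement'' for exactly one $i$ and ``two components'' for the other --- in particular ruling out degenerate configurations where both choices fail or both succeed. To handle this cleanly I would phrase it homologically: cutting the ball $B$ along $\tilde Q_i$ and regluing via $D^\pm$ is a genus-reducing (or genus-preserving) operation on the complement, and an Euler-characteristic / component count shows that the new complement has either one or two components; the total boundary data forces that summing over the two choices the ``defect'' is constant, so exactly one choice is reduced. Alternatively one can invoke the standard fact (already cited, \cite[Lemma~3.2]{M86}, \cite{St02}) that this is how disk exchange moves behave and simply record the statement, since the lemma is only used combinatorially in what follows.
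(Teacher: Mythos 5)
Your cut-and-paste strategy is viable, and it is genuinely different from the paper's argument, but the bookkeeping at exactly the decisive step is wrong as written, and this is the step that carries the whole lemma. After cutting $V$ along $\Sigma$ to get the spotted ball $B$ with two spots $D^+,D^-$ coming from $D$, the complement of $(\Sigma\setminus D)\cup Q_i$ is obtained from $B$ cut along $\widetilde{Q}_i$ by regluing the spot $D^+$ to the spot $D^-$ --- not by ``regluing the copy $D^+$ to the appropriate copy of $\widetilde{Q}_i$'', and the connectivity criterion is that $D^+$ and $D^-$ lie in \emph{different} components of $B$ cut along $\widetilde{Q}_i$; your phrase ``$D^+$ and the gluing copies of $\widetilde{Q}_i$ land in different components'' does not parse, since each of the two components contains one copy of $\widetilde{Q}_i$. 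You flag this yourself, but neither proposed remedy closes it: the ``summing the defect over the two choices'' argument is not substantiated, and citing \cite{M86}, \cite{St02} records the statement rather than proving it. The correct completion is short: $\partial D^-\cup\widetilde{\rho}$ is a theta-graph in the sphere $\partial B$, cutting it into three regions, namely the spot $D^-$ and the two disks bounded by $\widetilde{\alpha}_1=\gamma_1\cup\widetilde{\rho}$ and $\widetilde{\alpha}_2=\gamma_2\cup\widetilde{\rho}$; since $D^+$ is disjoint from this graph and from $D^-$, it lies in exactly one of the latter two regions, say the one bounded by $\widetilde{\alpha}_i$, and then $\widetilde{Q}_i$ separates $D^+$ from $D^-$ in $B$ while $\widetilde{Q}_{3-i}$ does not. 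Hence the reglued complement is connected for exactly one choice (and is then a ball, since the complement of $g$ disjoint disks in $V_g$ is a union of handlebodies of total Euler characteristic $1$), while for the other choice it is a ball together with a solid torus --- so, contrary to your last claim, the wrong choice is not ``simple but not reduced'': it is not even simple. Note that the separation criterion you are implicitly proving is exactly the characterization the paper states in the remark immediately following the lemma.

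For comparison, the paper's own proof is homological and avoids all of this cut-and-paste bookkeeping: an oriented reduced disk system gives a $\mathbb{Z}$-basis of $H_2(V,\partial V;\mathbb{Z})$, the class $[D]$ is the sum of the suitably oriented classes $[Q_1]+[Q_2]$, and since $[D]$ is a basis element exactly one substitution again yields a basis, hence a reduced system. Your route, once repaired as above, is more explicit --- it identifies \emph{which} disk works (the one separating the two copies of $D$ in the complementary ball), which is the description actually used later for directed disk exchange moves --- whereas the homological argument is shorter but only gives existence and uniqueness of the choice.
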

\begin{proof} A reduced disk system 
equipped with an orientation 
defines a basis over $\mathbb{Z}$ 
for the relative homology group 
$H_2(V,\partial V;\mathbb{Z})=\mathbb{Z}^n$.
The homology class of the oriented disk $D$ is 
the sum of the homology
classes of the suitably oriented disks $Q_1$ and $Q_2$. 
Since $D$ is a generator of $H_2(V,\partial V;\mathbb{Z})$, 
there is exactly one of the disks $Q_1,Q_2$, 
say the disk $Q_1$, so that
the disk system $\D^\prime$ obtained from 
$\D$ by replacing $D$ by
$Q_1$ defines a basis for $H_2(V,\partial V;\mathbb{Z})$. 
Then this disk system is reduced.
\end{proof}

Note that the disk $Q_1$
is characterized by the requirement that
the two spots in the boundary of $V\setminus\Sigma$ 
corresponding to the two copies of 
$D$ are contained in distinct connected components of
$V\setminus(\Sigma\cup Q_1)$. It only depends on $\Sigma$ and the
returning arc $\rho$.

\begin{defi}\label{diskmove}
Let $\Sigma$ be a reduced disk system. 
A \emph{disk exchange move} is the replacement
of a disk $D\in \Sigma$ by a disk $D^\prime$
which is disjoint from $\Sigma$ and 
such that $(\Sigma\setminus D)\cup D^\prime$ is a reduced
disk system. If $D^\prime$ is 
determined as in Lemma~\ref{lemma:unique-reduced-exchange} by a
returning arc of a disk in a disk system $\D$ then
the modification
is called a \emph{disk exchange move of $\Sigma$
in direction of $\D$} or simply a \emph{directed
disk exchange move}.

A sequence $(\Sigma_i)$ of reduced disk systems is called a \emph{disk
exchange sequence in direction of $\D$} (or \emph{directed disk
exchange sequence}) if each $\Sigma_{i+1}$ is obtained from $\Sigma_i$
by a disk exchange move in direction of $\D$.
\end{defi}

\begin{lemma}\label{lemma:surgery-sequences}
  Let $\Sigma_1$ be a reduced disk system and let $\D$ be any other
  disk system.  
  Then there is a disk exchange sequence $\Sigma_1,\ldots,\Sigma_n$ in
  direction of $\D$ such that $\Sigma_n$ is disjoint from $\D$.
\end{lemma}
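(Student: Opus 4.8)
The plan is to induct on the total number of intersections, measured by the sum over all disks $D\in\D$ of the number of intersection arcs in $D\cap\Sigma_1$ (equivalently, one can use the geometric intersection number of the boundary multicurves $\partial\Sigma_1$ and $\partial\D$). If this number is zero, then $\Sigma_1$ is already disjoint from $\D$ and we are done with $n=1$. Otherwise, since some disk $E\in\D$ meets $\Sigma_1$, there is an outermost component $E'$ of $E\setminus\Sigma_1$, bounded by an outermost arc $\rho$ (a returning arc relative to $\Sigma_1$, with endpoints on some $D\in\Sigma_1$) together with an arc $\beta\subset D$ whose interior lies in the interior of $D$. Apply the disk exchange move of $\Sigma_1$ in the direction of $\D$ determined by $\rho$ as in Lemma~\ref{lemma:unique-reduced-exchange}: replace $D$ by the unique disk $Q_1$ so that $\Sigma_2=(\Sigma_1\setminus D)\cup Q_1$ is again reduced. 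This produces the first step $\Sigma_1\to\Sigma_2$ of the exchange sequence.

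The heart of the argument is to show that this move strictly decreases the intersection measure, so that the induction terminates. The key point is that $Q_1$ can be realized, up to isotopy, as the disk bounded by $\alpha_1=\gamma_1\cup\rho$ where $\gamma_1$ is one of the two arcs of $\partial D\setminus\{\rho(0),\rho(1)\}$; since $\rho$ is an arc of $\partial E$ disjoint from $\Sigma_1$ in its interior, and $\gamma_1\subset\partial D\subset\partial\Sigma_1$, the new boundary curve $\partial Q_1$ can be pushed off $\Sigma_1$ so that it meets $\D$ in strictly fewer points than $\partial D$ did — roughly, we have traded the subarc $\gamma_2$ of $\partial D$ (which carried at least one intersection with $\partial E$ near $\rho$, namely the endpoints of the outermost bigon corresponding to $E'$) for the arc $\rho$, which by outermostness meets $\partial\D$ only at its endpoints. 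One then has to check two things: first, that after this replacement the disks can be put back into minimal position without increasing intersections (so that the bookkeeping is legitimate), and second, that the resulting configuration is genuinely reduced, which is exactly the content of Lemma~\ref{lemma:unique-reduced-exchange}. Iterating, one obtains the desired finite disk exchange sequence $\Sigma_1,\ldots,\Sigma_n$ in the direction of $\D$ with $\Sigma_n$ disjoint from $\D$.

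The main obstacle I expect is the intersection-counting step: one must be careful that replacing $D$ by $Q_1$ (rather than by the ``wrong'' disk $Q_2$, which might have more intersections with $\D$) is compatible with the decrease, and that no new intersections are created elsewhere when $\Sigma_2$ is isotoped into minimal position with $\D$. The remark in the text — that minimal position of disks behaves worse than normal position of sphere systems, since a component of $D\setminus\Sigma$ may have several boundary arcs on the same side of a disk — warns precisely that one cannot be cavalier here; one should phrase the decreasing quantity as the number of arc components of $\D\cap\Sigma_i$ and verify that the outermost surgery kills at least the outermost arc (and the bigon it cuts off) without reviving others. Once monotonicity is established, termination and hence the existence of the sequence is immediate. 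A clean way to package the estimate is to observe that $\partial Q_1$ is freely homotopic to a curve obtained by resolving $\partial D$ along $\rho$ and discarding $\gamma_2$, hence disjoint from the outermost subarc of $\partial E$, giving a strict drop of at least one in $i(\partial\Sigma,\partial\D)$.
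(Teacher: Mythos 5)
Your proposal is correct and follows essentially the same route as the paper: induct on the geometric intersection number of $\partial\Sigma_i$ with $\partial\D$, perform the disk exchange of Lemma~\ref{lemma:unique-reduced-exchange} along an outermost arc of $\D$, and observe that this strictly decreases the intersection number. The extra care you take with the decrease step (counting arcs, noting that $\partial Q_1=\gamma_1\cup\rho$ trades the endpoints of $\rho$ for an arc parallel to $\partial E$, and that passing to minimal position only lowers the count) is sound and simply spells out what the paper asserts in one line.
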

\begin{proof}
  We define the sequence $\Sigma_i$ inductively.
  Suppose $\Sigma_i$ is already defined and not yet disjoint from $\D$. Then
  there is a outermost arc $\rho$ of $\D$ with respect to
  $\Sigma_i$. By Lemma~\ref{lemma:unique-reduced-exchange}, there is a disk
  system $\Sigma_{i+1}$ obtained by a disk exchange move along this
  returning arc.  
  As a result of this surgery, the geometric intersection number between
  $\Sigma_{i+1}$ and $\D$ is strictly smaller than the geometric
  intersection number between $\Sigma_i$ and $\D$.
  Now the lemma
  follows by induction 
  on the geometric intersection number between $\partial \Sigma_1$ and
  $\partial \D$.
\end{proof}

\section{Racks}
\label{sec:racks}
In this section we define and investigate 
combinatorial objects which serve as analogs of 
train tracks for handlebodies.
Let again $V$ be a handlebody of genus $g\geq 2$,
perhaps with marked points on the boundary.

\begin{defi}\label{rack}
A \emph{rack} $R$ in $V$ is given by a reduced disk 
system $\Sigma(R)$, called
the \emph{support system} of the rack $R$,
and a collection of pairwise disjoint essential
embedded arcs in $\partial V\setminus\partial\Sigma(R)$ 
with endpoints on $\partial\Sigma(R)$,
called \emph{ropes}, which are pairwise non-homotopic 
relative to $\partial \Sigma(R)$.
At each side
of a support disk $D\in \Sigma(R)$, 
there is at least one rope which 
ends at the disk and approaches the disk from this side.

A rack $R$ is called \emph{large}, if the union of $\partial\Sigma(R)$
and the set of ropes decompose $\partial V$ into disks.
\end{defi}

Note that the
number of ropes of a rack  is uniformly bounded. 
In the sequel we often consider
isotopy classes of racks. 

Explicitly, we say 
that two racks $R,R^\prime$ are \emph{(weakly) isotopic} if their support
systems $\Sigma(R),\Sigma(R^\prime)$ are isotopic and
if after an identification of $\Sigma(R)$ with $\Sigma(R^\prime)$,
each rope of $R$ is freely homotopic relative to 
$\partial \Sigma(R)$ to a rope of $R^\prime$.
In Section~\ref{sec:rigid-racks} we will introduce a more restrictive
notion of equivalence of racks.

The handlebody group ${\rm Map}(V)$ acts
transitively on
the set of reduced disk systems,
and it acts on the set of weak isotopy classes of racks.
For every reduced disk system $\Sigma$ the stabilizer
of $\partial\Sigma$ in ${\rm Mod}(\partial V)$ is contained in 
${\rm Map}(V)$ (compare Proposition~\ref{prop:ball-group}).
This implies that there are only finitely
many orbits for the action
of ${\rm Map}(V)$ on the set of weak isotopy classes of racks. 
The stabilizer in ${\rm Map}(V)$ 
of a weak isotopy class of a rack $R$ with support system $\Sigma(R)$
contains the group $\mathbb{Z}^n$ of 
Dehn twists about the components of $\partial \Sigma(R)$.
In particular, this stabilizer is infinite.

\begin{defi}\label{carrying}
\begin{enumerate}
\item
A disk system $\D$ (or an arbitrary geodesic lamination
$\lambda$ on $\partial V$) 
is \emph{carried} by a rack $R$ if it is in minimal
position with respect to the support system 
$\Sigma(R)$ of $R$ and if each component of 
$\partial \D\setminus\partial\Sigma(R)$ (or of $\lambda\setminus\partial\Sigma(R)$)
is homotopic relative to $\partial \Sigma(R)$ 
to a rope of $R$.
\item An embedded essential arc $\rho$ in $\partial V$ 
with endpoints in $\partial \Sigma(R)$ 
is \emph{carried} by $R$ if each component of 
$\rho\setminus\partial\Sigma(R)$ is homotopic relative to
$\partial\Sigma(R)$ to a rope of $R$. 
\item A \emph{returning rope} of a rack $R$ is a rope which
begins and ends at the same side of some fixed 
support disk $D$ (i.e. defines a returning arc relative to $\partial \Sigma(R)$). 
\end{enumerate}
\end{defi}
\begin{remark}
  \begin{enumerate}[i)]
  \item A disk system $\D$ is carried by a rack $R$ if and only if
    each individual disk $D\in\D$ is carried by $R$.
  \item Every disk which does not intersect the support system
    $\Sigma(R)$ of a rack $R$ is not carried by $R$. In particular,
    the support system itself is not carried by $R$.
  \end{enumerate}
\end{remark}

Let $R$ be a rack with support system $\Sigma(R)$ and let $\alpha$ be a 
returning rope of $R$ with endpoints on a support disk $D\in \Sigma(R)$.
By Lemma~\ref{lemma:unique-reduced-exchange}, for one
of the components $\gamma_1,\gamma_2$ of 
$\partial D\setminus\alpha$, say the component $\gamma_1$, 
the simple closed curve
$\alpha\cup \gamma_1$ is the 
boundary of an embedded disk $D^\prime\subset H$
with the property that 
the disk system
$(\Sigma\setminus D)\cup D^\prime$ is reduced.

A \emph{split} of the rack $R$
at the returning rope $\alpha$ is any rack $R^\prime$
with support system $\Sigma^\prime=(\Sigma(R)\setminus D)\cup D^\prime$
whose ropes 
are given as follows.
\begin{enumerate}
\item Up to isotopy,  each 
rope $\rho^\prime$ of $R^\prime$ has its endpoints in 
$(\partial\Sigma(R)\setminus\partial D)\cup \gamma_1\subset \partial\Sigma(R)$ and 
is an arc carried by $R$. 
\item For every rope $\rho$ of $R$ there is a rope $\rho^\prime$
of $R^\prime$ such that $\rho$ is a component of 
$\rho^\prime\setminus\partial\Sigma(R)$.
\end{enumerate}

The above definition implies in particular that 
a rope of $R$ which does not have an endpoint on $\partial D$ is 
also a rope of $R^\prime$. Moreover, there is a map
$\Phi:R^\prime\to R$ which maps a rope of 
$R^\prime$ to an arc carried by $R$, and which maps 
the boundary of a support disk of $R^\prime$ to a simple closed
curve $\gamma$ of the form $\gamma_1\circ \gamma_2$ where
$\gamma_1$ either is a rope of $R$ or trivial, 
and where $\gamma_2$ is
a subarc of the boundary of a support disk of $R$
(which may be the entire boundary circle).
The image of $\Phi$ contains every rope of $R$.

Splits of racks behave differently from splits of train
tracks. Although this distinction is not explicitly needed for the
rest of this work, we note some important differences in the remainder
of this section. For these considerations we always consider racks up
to weak isotopy. 

A split of a rack $R$ at a returning rope is not unique. 
If $R^\prime$ is a split of $R$ and if $\varphi$ is a
Dehn-twist about the boundary of 
a support disk of $R$ then $\varphi(R^\prime)$ is a 
split of $R$ as well. Moreover the following example shows that even
up to the 
action of the group of Dehn twists about the boundaries of the
support system of $R$, there may be infinitely 
many racks which can be obtained from $R$ by a split.

\bigskip

\noindent
{\bf Example:} Let $V$ be the handlebody of genus $2$ and 
let $\Sigma$ be a reduced disk system consisting of two 
disks. Let $R$ be a rack with support system $\Sigma$ which 
contains 
two distinct returning ropes $\alpha,\beta$
approaching the same support disk $D\in \Sigma$ from two distinct
sides. Let $E\subset V$ be an essential
disk carried by $R$ with the following property. 
There is an
outermost component $E^\prime$ of 
$E\setminus\Sigma$ which contains an arc homotopic to $\alpha$ in its boundary.
Attached to $E^\prime
\subset E$ is a rectangle component 
$R_\beta\subset E$ of $E\setminus\Sigma$ with two opposite sides 
on $D$ which is a thickening of the returning rope $\beta$.
The rectangle $R_\beta$ is attached to a rectangle $R_\alpha$ 
with two sides on $D$ which is a thickening of 
$\alpha$ looping about the half-disk $E^\prime$.
$R_\alpha$ in turn is 
attached to a second copy of $R_\beta$ etc (see the figure).
\begin{figure}[ht]
\begin{center}
\includegraphics[width=0.8\textwidth]{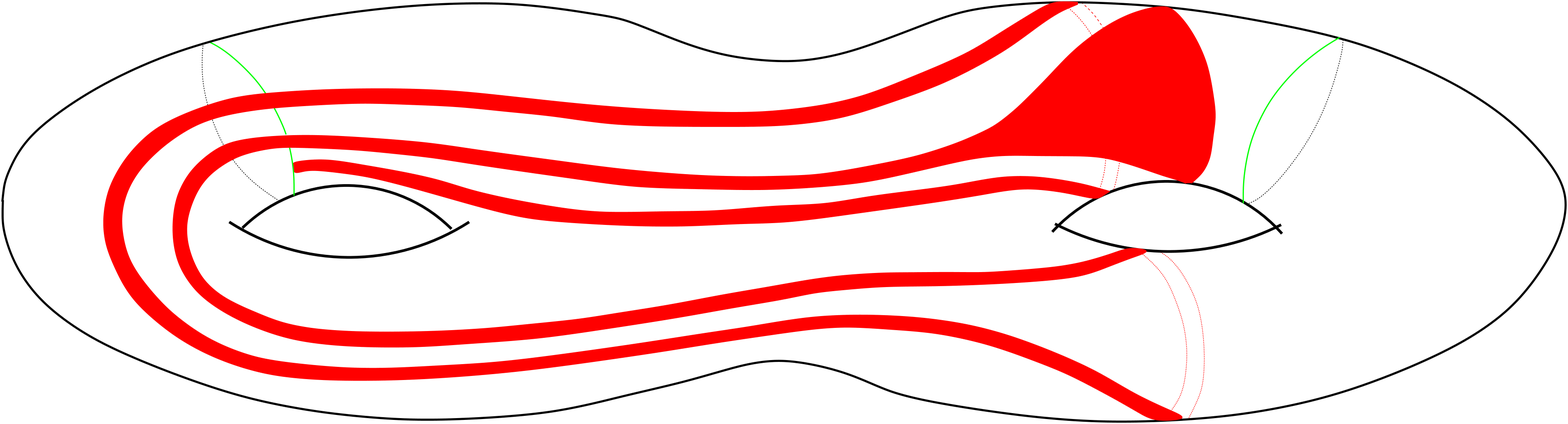}
\end{center}
\end{figure}
A rack $R^\prime$ 
whose support system is obtained from $\Sigma$ by
a single disk exchange in direction of $E$ and which carries
$\partial E$ contains a returning rope $\rho$ which
is carried
by $R$ and so that $\rho\setminus\Sigma$ has an arbitrarily large
number of components.

\bigskip

Another important difference between racks and 
train tracks concerns the relation between carrying and splitting.
One the one hand, there are splits $R'$ of $R$ which carry
disks which are not carried by $R$. 
Namely, let $R$ be a rack and $R'$ be a split of $R$. Denote the
support disk of $R'$ which is not a support disk of $R$ by $D$.
In particular, if $D$ is a disk carried by both $R$ and $R'$, then
images of $D$ under arbitrary powers of the Dehn twist about $\partial
D$ are still carried by $R'$, but not necessarily by $R$.

On the other hand, let $D$ be a disk carried by a rack $R$.
Then there may be no split $R'$ of $R$ which still carries
$D$. Namely, $R$ may have a single returning rope $\rho$ and thus
every split of $R$ has the same support system $\Sigma'$. If $\Sigma'$
is disjoint from $D$, no rack with support system $\Sigma'$ carries
$D$.

\section{The graph of rigid racks}
\label{sec:rigid-racks}
In this section we construct a geometric model for the handlebody group.
By a geometric model
we mean a connected locally finite graph on which the handlebody
group acts properly and cocompactly as a group of automorphisms.
The construction is similar in spirit to the construction of the
train track graph in \cite{H09}, which is a geometric model for the 
mapping class group.
The model we construct admits a family of distinguished paths which are 
used for a coarse geometric control of the handlebody group. These
paths are constructed below in Lemmas~\ref{lemma:force-to-carry} and~\ref{lemma:inductive-distance-control}.

As a first step one can define a \emph{graph of racks $\mathcal{R}(V)$}
in direct analogy to the definition of the train track graph in \cite{H09}. 
The vertex set of $\mathcal{R}(V)$
is the set of weak isotopy classes of large racks (satisfying a suitable
completeness condition which is not important for the current work). 
Two such vertices
are connected by an edge of length one if the corresponding 
racks are related by a single split.
By construction, the handlebody group acts on $\mathcal{R}(V)$ as
a group of automorphisms.
Imitating the proof of connectivity for the train track graph from
\cite[Corollary~3.7]{H09} one can then show 
that $\mathcal{R}(V)$ is connected. Since this result is not needed
in the sequel we do not include a proof here.

The graph of racks defined in this way is not a geometric
model for the handlebody group,  as the stabilizer of a weak isotopy
class of a rack
contains the group generated by Dehn twists about the support
system, and thus is in particular infinite. For the same reason, the
graph of racks is locally infinite. Also recall that even up to the
action of the group of Dehn twists about the support system of $R$,
there may be infinitely many different racks
which can be obtained from $R$ by a single split (as demonstrated by
the example in Section \ref{sec:racks}).

To define a geometric model for the handlebody group using racks, we
therefore have to overcome two 
difficulties. On the one hand, we need to record twist parameters
at the support curves so that the
stabilizer of a rack with a set of such twist parameters becomes finite. 
On the other hand, the edges have to be more restrictive than splits so 
that the graph becomes locally finite. 

For the purposes of this article, these problems will be addressed by
considering a more restrictive notion of equivalence of racks.
\begin{defi}
  \begin{enumerate}[i)]
  \item Let $R$ be a large rack. The union of the support system 
    and the system of ropes of $R$ defines a cell decomposition of
    the surface $\partial V$ which we call the \emph{cell
      decomposition induced by $R$}.
  \item Let $R$ and $R'$ be racks. We say that $R$ and $R'$ are
    \emph{rigidly isotopic} if the cell decompositions induced by $R$
    and $R'$ are isotopic as cell decompositions of the surface
    $\partial V$.
  \end{enumerate}
\end{defi}
In particular, if $\varphi$ is a simple Dehn twist about the boundary
of a support curve of a rack $R$, then $R$ and $\varphi^n(R)$ are not
rigidly isotopic for $n\geq 2$. This observation and the fact that the
stabilizer of a reduced disk system in the mapping class group is
contained in the handlebody group imply the following.

\begin{cor}\label{cor:marked-finite-quotient}
  The handlebody group acts on the set of rigid isotopy classes of
  racks with finite quotient and finite stabilizers.
\end{cor}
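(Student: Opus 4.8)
The plan is to reduce the corollary to the corresponding statement for \emph{weak} isotopy classes of racks, which was already discussed in Section~\ref{sec:racks}: there $\Map(V)$ was shown to act on the set of weak isotopy classes of large racks with only finitely many orbits (using transitivity of $\Map(V)$ on reduced disk systems, the inclusion into $\Map(V)$ of the $\Map(\partial V)$--stabilizer of the boundary multicurve of a reduced disk system from Proposition~\ref{prop:ball-group}, and the uniform bound on the number of ropes of a rack). Forgetting the rigid data gives a surjective $\Map(V)$--equivariant map from rigid isotopy classes of large racks to weak isotopy classes of large racks, so I would only need to analyse a single fibre, say the set $\mathcal F(R)$ of rigid isotopy classes of racks weakly isotopic to a fixed large rack $R$ with support system $\Sigma$.

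The first thing to establish is that the group $T\cong\ZZ^n$ ($n=g$) of Dehn twists about the $n$ components of $\partial\Sigma$, which lies in $\Map(V)$ and fixes the weak isotopy class of $R$, acts on $\mathcal F(R)$ with finite quotient. I would argue as follows: a rack weakly isotopic to $R$ has support system $\Sigma$ and has each rope homotopic to a rope of $R$ rel $\partial\Sigma$ with endpoints free to slide along $\partial\Sigma$; since a rack has only boundedly many ropes, there are only finitely many possible cyclic orders of the rope endpoints along the components of $\partial\Sigma$, and once this combinatorial type is pinned down, the induced cell decomposition of $\partial V$ is determined up to isotopy by a relative twisting parameter along each component of $\partial\Sigma$, and these parameters together range over a $T$-torsor. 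Granting this, the finite-quotient part of the corollary follows: every rigid isotopy class can first be moved by $\Map(V)$ so that its underlying weak class is one of finitely many representatives, and then, using $T\subset\Map(V)$, further moved into one of finitely many rigid classes over that weak class.

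For the finiteness of stabilizers I would argue directly. If $\psi\in\Map(V)$ fixes the rigid isotopy class of a large rack $R$, then $\psi$ preserves, up to isotopy, the cell decomposition of $\partial V$ induced by $R$; since $R$ is large this cell decomposition has disk complementary regions, so its one-skeleton (the support multicurve together with the ropes) fills $\partial V$, and the subgroup of $\Map(\partial V)$ preserving such a filling system is finite by the Alexander method (compare \cite{FM11}). On the twist subgroup $T$ this finiteness is exactly the observation preceding the statement, that $\varphi^n(R)$ is not rigidly isotopic to $R$ for a simple support-curve twist $\varphi$ and $n\geq 2$. Since $\mathrm{Stab}_{\Map(V)}(R)$ is contained in the finite group above, it is finite.

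I expect the genuinely delicate step to be the finiteness of $\mathcal F(R)/T$ --- that is, the claim that the passage from weak to rigid isotopy classes of racks costs only a choice of twisting around the $n$ support curves together with finitely much combinatorial data. The rest is packaging: the already-established finiteness of weak isotopy classes modulo $\Map(V)$, and the standard rigidity of filling systems of arcs and curves under the mapping class group.
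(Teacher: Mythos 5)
Your proposal is correct in outline, but it follows a genuinely different route from the paper's, and the difference is instructive. The paper does not pass through weak isotopy classes at all: it fixes the support system instead, using that $\Map(V)$ acts transitively on reduced disk systems and that the \emph{full} stabilizer of $\partial\Sigma(R)$ in $\Map(\partial V)$ --- not merely the twist group $T\cong\ZZ^g$ --- is contained in $\Map(V)$ (the same mechanism as in the proof of part~i) of Lemma~\ref{lemma:connectivity-constant}). Since a large rack has uniformly boundedly many ropes, its induced cell decomposition has boundedly many cells, so up to homeomorphisms of $\partial V$ preserving $\partial\Sigma(R)$ there are only finitely many isotopy classes of such decompositions; hence the stabilizer of $\partial\Sigma(R)$, and a fortiori $\Map(V)$, acts with finite quotient, with no need to understand how rigid classes sit over weak classes. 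This bypasses exactly the step you single out as delicate: with only $T$ available you must know that the weak-to-rigid fiber consists of finitely many $T$-orbits, i.e.\ that the only freedom between weak and rigid isotopy is twisting about the support curves. That claim is true, but as written you assert it (``Granting this\dots''); to close it you would cut $\partial V$ along $\partial\Sigma(R)$, use that disjoint realizations of a system of pairwise non-homotopic essential arcs with prescribed sliding-endpoint homotopy classes in the cut surface are unique up to isotopy, and use that a mapping class of $\partial V$ preserving $\partial\Sigma(R)$ which becomes trivial after cutting is a multitwist about the components of $\partial\Sigma(R)$; alternatively, replacing $T$ by the full stabilizer of $\partial\Sigma(R)$ makes the fiber analysis unnecessary. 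Your finite-stabilizer argument via the Alexander method is correct and in fact more complete than the paper's hint (the observation that $R$ and $\varphi^n(R)$ are not rigidly isotopic only rules out the twist subgroup); note that it uses largeness of $R$, which is indeed the intended setting, since the induced cell decomposition is only defined for large racks and the graph of rigid racks is built from large racks.
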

This corollary shows that the set of rigid isotopy classes of racks
can be used as the set of 
vertices of a $\Map(V)$-graph which is a geometric model for
$\Map(V)$.

To define a suitable set of edges for such a graph we note the
following lemma. 
\begin{lemma}\label{lemma:connectivity-constant}
  \begin{enumerate}[i)]
  \item There is a number $K_1>0$ with the following property. Let $R, R'$
  be two racks sharing the same support system. Then there is a
  sequence
  $$R = R_1,\ldots, R_N = R'$$
  of racks, such that the number of intersections between the cell
  decompositions induced by $R_i$ and $R_{i+1}$ is less than $K_1$ for all
  $i=1,\ldots, N-1$.
  
  \item There is a number $K_2>0$ with the following property.
    Let $R$ be a rack and let $\alpha$ be a returning rope of
    $R$. Then there is a rack $R'$ which is obtained from $R$ by a
    split along $\alpha$ such that the number of intersections between
    the cell decompositions induced by $R$ and $R'$ is less than
    $K_2$.
  \end{enumerate}
\end{lemma}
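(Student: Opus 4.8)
The plan is to treat the two parts separately, in both cases exploiting that a rack has only a uniformly bounded number of ropes, so that its induced cell decomposition $C(R)$ has a uniformly bounded number of cells; hence the only way $C(R)$ and $C(R')$ can have \emph{large} intersection number is through ``winding'', which is what has to be controlled.

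For part (i) the plan is to reduce everything to a bounded list of model racks. Since $\Map(V)$ acts transitively on isotopy classes of reduced disk systems, it suffices to treat racks sharing one fixed support system $\Sigma$; write $\Gamma_\Sigma$ for the stabilizer of $\partial\Sigma$ in $\Map(\partial V)$, which by the discussion preceding Definition~\ref{rack} is contained in $\Map(V)$. Two racks with support $\Sigma$ lying in the same $\Map(V)$-orbit are automatically related by an element of $\Gamma_\Sigma$ (a mapping class carrying one to the other must carry $\partial\Sigma$ to itself), so Corollary~\ref{cor:marked-finite-quotient} gives that there are only finitely many $\Gamma_\Sigma$-orbits of racks with support $\Sigma$; fix representatives $\hat R_1,\dots,\hat R_t$. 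The group $\Gamma_\Sigma$ is finitely generated; fix a finite generating set $H$. For an arbitrary rack $R$ with support $\Sigma$, write $R=\psi(\hat R_i)$ with $\psi\in\Gamma_\Sigma$ and expand $\psi=h_1\cdots h_m$ as a word in $H\cup H^{-1}$; the sequence $\hat R_i,\,h_1(\hat R_i),\,h_1h_2(\hat R_i),\dots,R$ consists of racks with support $\Sigma$, and since homeomorphisms preserve geometric intersection numbers of cell decompositions, consecutive terms have intersection number $i\bigl(C(\hat R_i),C(h\hat R_i)\bigr)$ for some $h\in H\cup H^{-1}$ — one of finitely many numbers. Finally one joins the model racks $\hat R_i,\hat R_j$ to one another through racks with support $\Sigma$ by elementary moves, each replacing a single rope by a disjoint one; this is the standard connectivity of the system of arcs filling a bordered surface (alternatively: join them by an arbitrary bounded-length path in the graph of racks, using only that there are finitely many model racks), and each such move changes the cell decomposition by a bounded amount. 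Concatenating $R\leadsto\hat R_i\leadsto\hat R_j\leadsto R'$ and choosing $K_1$ larger than all the finitely many bounds that appear proves (i).

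For part (ii) the plan is to produce one explicit split supported in a small neighbourhood of $\alpha\cup\partial D$, where $D\in\Sigma(R)$ carries the endpoints of $\alpha$. Recall the split replaces $D$ by the disk $D'$ of Lemma~\ref{lemma:unique-reduced-exchange}, with $\partial D'=\alpha\cup\gamma_1$ and $\gamma_1\subset\partial D$; since $\alpha$ is a rope it meets $\partial\Sigma(R)$ only in its endpoints, so $\partial D'$ can be isotoped disjoint from $\partial\Sigma(R)$, and in particular $i(\partial\Sigma(R),\partial\Sigma')$ is bounded. Let $W$ be a regular neighbourhood of $\partial D\cup\alpha$ in $\partial V$; as both $\partial D$ and $\alpha$ are disjoint from the ropes of $R$, only boundedly many ropes meet $W$, and they meet it in boundedly many arcs. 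I would then define the ropes of $R'$ to agree with those of $R$ outside $W$, rerouting inside $W$ the (boundedly many) rope-ends lying on $\partial D$ so that they land on $\partial D'$, and adding a bounded number of short ropes near $D'$ so that both sides of $D'$ carry a rope and so that $\alpha$ appears as a subarc of some rope of $R'$ — this is exactly what is needed to make $R'$ a genuine split in the sense of Section~\ref{sec:racks}. Since all modifications take place inside $W$, a subsurface of bounded topological type crossed by a bounded number of arcs of $C(R)$, and since $C(R')\cap W$ is again an arc system of bounded complexity, all intersections of $C(R)$ and $C(R')$ occur inside $W$ and are bounded in number by a uniform constant $K_2$.

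The step I expect to be the real obstacle is the connectivity input in part (i): showing that the finitely many model racks over a fixed reduced disk system can be joined to one another, by moves of uniformly bounded complexity, through racks over that \emph{same} support system (or, in the variant formulation, invoking connectivity of the graph of racks). In part (ii) the conceptual content is easy — a split is a local modification, hence cheap — but there is genuine bookkeeping in checking that the locally-modified object really satisfies every axiom of a split, in particular that each of its ropes is carried by $R$ and that its complementary regions are still disks.
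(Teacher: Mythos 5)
Your part (i) is essentially the paper's own argument. The paper disposes of it in one sentence, citing exactly the fact you establish -- that the stabilizer of $\partial\Sigma$ in $\Map(\partial V)$ is contained in the handlebody group and acts with finite quotient on rigid isotopy classes of racks with support system $\Sigma$ -- and your orbit-representative/word-in-generators chain is the standard way of turning that sentence into a proof. (For joining the finitely many model racks you do not need any connectivity statement for arc systems: the pairwise intersection numbers of the finitely many models form a finite set, so you may pass from $\hat R_i$ to $\hat R_j$ in a single step and absorb the maximum into $K_1$.)

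Part (ii) is where you genuinely diverge, and it is also where your sketch is not right as stated. The paper does not build the split by hand: it notes that $\partial D'=\alpha\cup\gamma_1$ is a union of uniformly few edges of the cell decomposition induced by $R$, so $\partial\Sigma'$ meets that cell decomposition in uniformly few points, and then it re-runs the finiteness/cocompactness argument of part (i) for racks with support $\Sigma'$, again using that the stabilizer of $\partial\Sigma'$ lies in the handlebody group; this sidesteps all verification of the split axioms. Your explicit surgery can be made to work, but two steps fail as written, precisely at the place you flagged. First, the rerouting cannot be confined to $W$: a rope of $R$ ending on $\gamma_2$ from the side of $D$ containing $\alpha$ lies, after the split, in the region adjacent to $\alpha\cup\gamma_2$; since ropes of $R'$ may neither cross $\alpha$ nor end on the $\alpha$-part of $\partial D'$ (the definition of a split requires endpoints in $(\partial\Sigma(R)\setminus\partial D)\cup\gamma_1$), such a rope end must cross $\gamma_2$ and continue along a parallel copy of a rope of $R$ on the other side of $D$, i.e.\ along arcs leaving $W$. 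This does not hurt the intersection bound, because parallel copies of ropes of $R$ create no new crossings away from a neighborhood of $\partial D$, but it does invalidate the literal claim that all modifications, hence all intersections, occur inside $W$. Second, ``short ropes near $D'$'' are typically boundary-parallel to $\partial D'$ and hence inessential; to guarantee that both sides of $D'$ meet an essential carried rope, and that $\alpha$ itself occurs as a component of some rope of $R'$ cut along $\partial\Sigma(R)$ (condition (2) in the definition of a split), you again need the longer concatenations just described (a push-off of $\alpha$ continued across $\gamma_2$ by copies of ropes approaching $D$ from the other side). With these repairs your construction does yield a uniform $K_2$; the paper's non-constructive route reaches the same conclusion without this bookkeeping.
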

\begin{proof}
  Part \textit{i)} of the lemma follows immediately from the fact that
  for every reduced disk system $\Sigma$ of $V$, the stabilizer of 
  $\partial \Sigma$ in the mapping class group of $\partial V$ is contained
  in the handlebody group and acts with finite quotient on the set of
  all rigid isotopy classes of racks with a common support system.

  To prove part \textit{ii)}, let $\Sigma'$ be the reduced disk
  system obtained from the support system of $R$ by the disk
  exchange along the returning rope $\alpha$. Every component of
  $\partial \Sigma'$ is homotopic to a union of uniformly few edges
  of the cell decomposition induced by $R$.
  Therefore, the number of
  intersections between $\Sigma'$ and the cell decomposition induced by
  $R$ can be uniformly bounded.
  Now the claim follows as in part \textit{i)} since the stabilizer
  of $\partial \Sigma'$ in the mapping class group of $\partial V$
  is contained in the handlebody group.
\end{proof}
\begin{defi}
  The \emph{graph of rigid racks} $\mathcal{RR}(V)$ is the graph whose
  vertex set is the set of rigid isotopy classes of large racks. Two such
  vertices are joined by an edge if the intersection number between
  the cell decompositions induced by the large racks corresponding to the
  edges is at most $K$. Here $K$ is the maximum of the constants $K_1$
  and $K_2$ of Lemma~\ref{lemma:connectivity-constant}.
\end{defi}
\begin{remark}
  Part \textit{ii)} of Lemma~\ref{lemma:connectivity-constant} can be
  interpreted as the fact that twisting data about the
  support system of a rack $R$ determines a finite number of splits which are
  adapted to these twist parameters. Furthermore, each of these possible splits
  carries a coarsely unique set of twist parameters induced by the
  original rack $R$.
\end{remark}
Lemma~\ref{lemma:connectivity-constant} implies that
$\mathcal{RR}(V)$ is connected. Since the handlebody group acts on
$\mathcal{RR}(V)$ properly discontinuously and cocompactly, it is a
geometric model of the handlebody group by the Svar\'c-Milnor-Lemma.

As a next step we define a distinguished class of paths in 
$\mathcal{RR}(V)$. These paths are sufficiently well-behaved to obtain a
coarse geometric control for the handlebody group.
The length estimates for these paths use markings and
Corollary~\ref{cor:bounding-cell-intersections} which relates
word norms of mapping class group elements to intersection numbers of
cell decompositions. The necessary
definitions and statements are given in the Appendix.

In order to simplify the notation for the rest of the paper, we usually do not
specify constants or additive and multiplicative errors in formulas, but
rather state that a quantity $x$ is ``coarsely
bounded'' by some other quantity $y$ (or ``uniformly bounded''). By
this we mean that there are constants $C_1,C_2$
which only depend on the genus (and the number of marked points) of
$V$, such that $x$ is bounded by $C_1\cdot y+C_2$ (or $C_1$).
\begin{lemma}\label{lemma:force-to-carry}
  There is a number $k>0$ satisfying the following.
  Let $P$ be a pants decomposition of $\partial V$ all of whose
  components bound disks in $V$. Let $R$ be a large rack with support
  system $\Sigma(R)$. Then there is a large rack $R'$ with the
  following properties.
  \begin{enumerate}[i)]
  \item The support system $\Sigma(R')$ of $R'$ agrees with the one of $R$.
  \item Each component of $P$ which intersects the support system of
    $R$ essentially is carried by $R'$.
  \item Each component of $P\setminus\partial \Sigma(R')$ intersects the
    cell decomposition induced by $R'$ in at most 
    $k$ points.
  \item The distance between $R$ and $R'$ in $\mathcal{RR}(V)$ is
    coarsely bounded by $i(P, \partial\Sigma(R))$.
  \end{enumerate}
\end{lemma}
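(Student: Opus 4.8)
The plan is to build $R'$ from $R$ by a controlled sequence of disk exchange moves on $\Sigma(R)$ driven by the pants decomposition $P$, then carefully re-install ropes. First I would apply Lemma~\ref{lemma:surgery-sequences} with $\Sigma_1 = \Sigma(R)$ and $\mathcal D = P$ (viewing the disks bounded by the components of $P$ as the disk system) to obtain a disk exchange sequence $\Sigma(R) = \Sigma_1,\ldots,\Sigma_n$ in direction of $P$ with $\Sigma_n$ disjoint from $P$. The point of running this auxiliary sequence is purely bookkeeping: it tracks how the returning arcs of $P$ relative to $\Sigma(R)$ sit, and it lets me estimate things in terms of $i(P,\partial\Sigma(R))$, since each exchange strictly decreases the geometric intersection number. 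However, the lemma demands that $\Sigma(R') = \Sigma(R)$, so I do not actually change the support system. Instead I use the sequence only to identify which components of $P$ meet $\Sigma(R)$ essentially and to organize the argument; the real construction keeps $\Sigma(R)$ fixed and modifies only the ropes.

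With $\Sigma(R') = \Sigma(R)$ in hand, the next step is to choose the ropes of $R'$. For each component $c$ of $P$ that intersects $\Sigma(R)$ essentially, $c \setminus \partial\Sigma(R)$ is a union of arcs with endpoints on $\partial\Sigma(R)$; I take these arcs (one representative per homotopy class relative to $\partial\Sigma(R)$) to be among the ropes of $R'$. This immediately gives property (ii): such a $c$ is carried by $R'$ by construction. To guarantee $R'$ is a genuine large rack I must add finitely many further ropes so that the ropes together with $\partial\Sigma(R)$ cut $\partial V$ into disks and so that at each side of each support disk some rope ends; this is possible since $P$ is a pants decomposition all of whose curves bound disks, so after the essential components of $P$ are added the complementary regions are already simple or nearly so, and only a uniformly bounded number of extra arcs is needed. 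Property (iii) — that each component of $P\setminus\partial\Sigma(R')$ meets the induced cell decomposition in at most $k$ points — follows because each such component is, up to homotopy rel $\partial\Sigma(R')$, literally a rope of $R'$, hence meets the $1$-skeleton of the cell decomposition in a uniformly bounded way (the number of ropes of any rack is uniformly bounded, so the combinatorial complexity of the cell decomposition is uniformly bounded, and an edge of the decomposition meets the decomposition boundedly).

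Property (iv), the distance bound in $\mathcal{RR}(V)$, is where the work concentrates and is the main obstacle. The point is that $R$ and $R'$ share a support system but have different ropes, so a priori Lemma~\ref{lemma:connectivity-constant}(i) only gives that $R$ and $R'$ are connected by \emph{some} path of racks with the same support system and bounded-intersection steps — but with no control on the length of that path. To get a bound linear in $i(P,\partial\Sigma(R))$ I would estimate the number of intersections between the cell decompositions induced by $R$ and $R'$: the ropes of $R'$ are subarcs of components of $P$ (plus a bounded number of filler arcs), so $i(\text{cell dec. of }R, \text{cell dec. of }R')$ is coarsely bounded by $i(P,\partial\Sigma(R))$ together with a bounded contribution from the ropes of $R$ and the filler arcs. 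Then, invoking Corollary~\ref{cor:bounding-cell-intersections} from the Appendix — which converts intersection numbers of cell decompositions into word-norm (hence distance) estimates in the appropriate graph — and using that the stabilizer of $\partial\Sigma(R)$ in the mapping class group lies in the handlebody group (so acts cocompactly on racks with support $\Sigma(R)$), I conclude $d_{\mathcal{RR}(V)}(R,R')$ is coarsely bounded by $i(P,\partial\Sigma(R))$. The delicate point to get right is that $R$ and $R'$ are both large and both have support system $\Sigma(R)$, so the path provided stays in the ``same support system'' stratum and Lemma~\ref{lemma:connectivity-constant}(i) applies uniformly; one must check that the filler arcs added to make $R'$ large can be chosen with intersection number against $R$ that is uniformly bounded (independent of $P$), which is plausible since they can be taken disjoint from the essential part of $P$ and supported near $\Sigma(R)$.
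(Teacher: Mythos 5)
Your construction is essentially the paper's: keep $\Sigma(R)$, take the arcs of $P\setminus\partial\Sigma(R)$ in the surface cut along $\partial\Sigma(R)$ as ropes, complete to a large rack by adding controlled filler ropes, and deduce property \textit{iv)} from Corollary~\ref{cor:bounding-cell-intersections} applied in the cut subsurface, whose mapping class group is contained in the handlebody group; the opening appeal to Lemma~\ref{lemma:surgery-sequences} is, as you yourself note, never actually used. The one small deviation is in the bookkeeping for the fillers: the paper does not ask that they meet the ropes of $R$ in uniformly few points (which is doubtful in general), only in at most $i(P,C)$ points, where $C$ is the cell decomposition induced by $R$, and this weaker control already suffices for the coarse distance bound.
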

\begin{proof}
  Denote the cell decomposition induced by $R$ by $C$.
  Let $S'$ be the surface obtained from $\partial V$ by cutting at
  $\partial\Sigma(R)$. The intersection of $P$ with $S'$ is a union of
  simple closed curves and arcs connecting the boundary components of
  $S'$. We call these arcs the \emph{arcs induced by $P$}.
  Let $\hat{R}$ be the rack whose support support system agrees with  
  the one of $R$ and whose ropes are given by the arcs induced by $P$.
  If $\hat{R}$ is not a large rack, then we can add ropes to $\hat{R}$
  which intersect $P$ in uniformly few points, and which intersect
  ropes of $R$ in at most $i(P,C)$ points. Call the result $R'$.

  From the construction of the rack $R'$, properties \textit{i)} to
  \textit{iii)} are immediate. Property \textit{iv)} follows by
  applying Corollary \ref{cor:bounding-cell-intersections} to the cell
  decomposition $C$ and the cell decomposition induced by $R'$ on the
  subsurface $S'$.
\end{proof}
\begin{defi}
  If $P$ and $R'$ satisfy the conclusions \textit{ii)} and \textit{iii)}
  of Lemma~\ref{lemma:force-to-carry} above, we say that \emph{$P$ is
    effectively carried by $R'$}.
\end{defi}
The following lemma is the main step towards the upper distortion
bound for the handlebody group and contains the construction of the
distinguished paths in the handlebody group.
\begin{lemma}\label{lemma:inductive-distance-control}
  Let $P$ be a pants decomposition all of whose components bound disks
  in $V$.
  Suppose $P$ is effectively carried by a rack $R$ with support system
  $\Sigma(R)$. If at least one component of $P$ intersects
  $\partial\Sigma(R)$ essentially, there is a rack $R'$ with the
  following properties.
  \begin{enumerate}[i)]
  \item The support system $\Sigma(R')$ is obtained from $\Sigma(R)$
    by a disk exchange move in the direction
    of a component of $P$.
  \item $P$ is effectively carried by $R'$.
  \item The distance of $R$ and $R'$ in $\mathcal{RR}(V)$ is coarsely
    bounded by $i(P,\partial\Sigma(R))$.
  \end{enumerate}
\end{lemma}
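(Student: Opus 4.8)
The plan is to find a returning rope of $R$ realized by an outermost arc of one of the disks bounded by $P$, to perform the corresponding disk exchange on $\Sigma(R)$, and then to re-equip the new support system with ropes adapted to $P$ by means of Lemma~\ref{lemma:force-to-carry}. First, pick a disk $D$ bounded by a component of $P$ that intersects $\partial\Sigma(R)$ essentially; such a $D$ exists by hypothesis, and since $P$ is effectively carried by $R$ the disk $D$ is carried by $R$, in particular in minimal position with $\Sigma(R)$. As $D$ is not disjoint from $\Sigma(R)$ there is an outermost component of $D\setminus\Sigma(R)$, hence an outermost arc $\rho\subset\partial D$ relative to $\Sigma(R)$; this $\rho$ is a returning arc, and because $D$ is carried by $R$ it is homotopic rel $\partial\Sigma(R)$ to a rope of $R$. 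Since a homotopy rel $\partial\Sigma(R)$ preserves the pair of boundary components and the sides at which an arc ends, that rope is a returning rope $\alpha$ of $R$, with endpoints on some support disk $D_0\in\Sigma(R)$. By Lemma~\ref{lemma:unique-reduced-exchange}, $\alpha$ — equivalently $\rho$, since the construction there depends only on $\Sigma(R)$ and on the returning arc up to homotopy — determines a disk $D_0'$ for which $\Sigma':=(\Sigma(R)\setminus D_0)\cup D_0'$ is reduced; this is a disk exchange move of $\Sigma(R)$ in the direction of the component $D$ of $P$, and the argument in the proof of Lemma~\ref{lemma:surgery-sequences} gives $i(P,\partial\Sigma')<i(P,\partial\Sigma(R))$.

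Next I would invoke part ii) of Lemma~\ref{lemma:connectivity-constant} for the returning rope $\alpha$ to produce a large rack $R''$ with support system $\Sigma(R'')=\Sigma'$ such that the cell decompositions induced by $R$ and $R''$ intersect in fewer than $K$ points, so that $R$ and $R''$ are joined by an edge of $\mathcal{RR}(V)$. I would then apply Lemma~\ref{lemma:force-to-carry} to the pants decomposition $P$ and the large rack $R''$, obtaining a large rack $R'$ with the same support system $\Sigma'$ for which $P$ is effectively carried and the distance between $R''$ and $R'$ in $\mathcal{RR}(V)$ is coarsely bounded by $i(P,\partial\Sigma(R''))=i(P,\partial\Sigma')$. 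This $R'$ satisfies all three conclusions: property i) holds because $\Sigma(R')=\Sigma'$ is a disk exchange of $\Sigma(R)$ in the direction of $D$; property ii) is exactly the conclusion of Lemma~\ref{lemma:force-to-carry}; and property iii) follows from
$$d_{\mathcal{RR}(V)}(R,R')\leq d_{\mathcal{RR}(V)}(R,R'')+d_{\mathcal{RR}(V)}(R'',R')\leq 1+C\,i(P,\partial\Sigma')+C'\leq 1+C\,i(P,\partial\Sigma(R))+C'.$$

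The step I expect to be most delicate is matching up the support systems in the middle of the argument: one must check that the disk exchange furnished by part ii) of Lemma~\ref{lemma:connectivity-constant} along the rope $\alpha$ coincides with the one determined by the outermost arc $\rho$ of $D$, which rests on the fact that the disk $D_0'$ produced by Lemma~\ref{lemma:unique-reduced-exchange} depends only on $\Sigma(R)$ and on the returning arc up to homotopy rel $\partial\Sigma(R)$. One also needs $R''$ to be a large rack in order to feed it into Lemma~\ref{lemma:force-to-carry}; if the split provided by Lemma~\ref{lemma:connectivity-constant} is not already large, one completes it to a large rack by adding a bounded number of ropes meeting the cell decomposition induced by $R$ and the curves of $P$ in uniformly few points, which changes none of the estimates above.
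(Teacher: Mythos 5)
Your argument is correct and follows essentially the same route as the paper: locate a returning rope of $R$ corresponding to a returning (outermost) arc of a disk bounded by $P$, perform the associated disk exchange via Lemma~\ref{lemma:unique-reduced-exchange}, pass to a large rack on the new support system $\Sigma'$ at distance coarsely bounded by $i(P,\partial\Sigma(R))$, and finish by applying Lemma~\ref{lemma:force-to-carry}. The only cosmetic difference is that you produce the intermediate rack from part ii) of Lemma~\ref{lemma:connectivity-constant}, whereas the paper takes the arcs induced by $P$ relative to $\Sigma'$ as ropes and bounds the distance directly with Corollary~\ref{cor:bounding-cell-intersections} applied in $\partial V\setminus\partial\Sigma'$; both give the same estimate.
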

\begin{proof}
  Since the intersection of $P$ with $\partial\Sigma(R)$ is nonempty,
  the rack $R$ has a returning rope $\alpha$ corresponding to an arc
  induced by $P$. 

  Let $\Sigma'$ be the reduced disk system obtained from $\Sigma(R)$
  by a disk exchange along the returning leaf $\alpha$. Each component
  of $\partial \Sigma'$ intersects the cell decomposition induced by
  $R$ in uniformly few points. Define a rack $\hat{R}$ with support
  system $\Sigma'$ by choosing the arcs induced by $P$ relative to
  $\Sigma'$ as ropes. By construction, each rope of $\hat{R}$ is
  obtained as a concatenation of ropes of $R$ (as in the definition of
  the split of a rack). Furthermore, each rope of $\hat{R}$ intersects
  $\Sigma(R)$ in at most as many points as $P$ does. Therefore, the
  intersection number between a rope of $\hat{R}$ and the cell
  decomposition induced by $R$ can be coarsely bounded by $i(P,\partial\Sigma(R))$.
  We can extend $\hat{R}$ in any way to a large rack $R'$ such that every
  rope of $R'$ has the same property.
  Both $R'$ and $R$ intersect $\partial \Sigma'$ in uniformly few
  points. The mapping class group of $\partial V\setminus \partial
  \Sigma'$ is contained in the handlebody group and undistorted in the
  mapping class group. Hence Corollary~\ref{cor:bounding-cell-intersections}
  applied in the subsurface $\partial V\setminus\partial\Sigma'$
  implies that the distance
  between $R$ and $R'$ in $\mathcal{RR}(V)$ is coarsely bounded by
  $i(P,\partial\Sigma(R))$. 
  Now we can apply Lemma~\ref{lemma:force-to-carry} to $R'$ to obtain
  a rack with the desired properties.
\end{proof}

The following theorem is an easy consequence of
Lemma~\ref{lemma:inductive-distance-control}.
\begin{theorem}\label{thm:upper-distortion-bound}
  Let $g\geq 2$ be arbitrary. Then the handlebody group $\Map(V_g)$ is
  at most exponentially distorted in the mapping class group.
\end{theorem}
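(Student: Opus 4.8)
The plan is to pass to the geometric model $\mathcal{RR}(V)$ of the handlebody group and estimate distances there. By Corollary~\ref{cor:marked-finite-quotient} and the \v{S}varc--Milnor lemma, the orbit map $h\mapsto h R_0$ is a quasi-isometry $\Map(V)\to\mathcal{RR}(V)$ for any fixed base vertex $R_0$. I fix once and for all a pants decomposition $P_0$ of $\partial V$ all of whose components bound disks in $V$ (obtained by extending a reduced disk system to a maximal disk system), a reduced subsystem $\Sigma_0\subset P_0$ with $g$ components, and a large rack $R_0$ with support system $\Sigma(R_0)=\Sigma_0$ which effectively carries $P_0$ (such a rack exists by Lemma~\ref{lemma:force-to-carry} applied to any large rack with support $\Sigma_0$). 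Writing $L=\|h\|_{\Map(\partial V)}$, it then suffices to prove $d_{\mathcal{RR}(V)}(R_0,hR_0)\lesssim e^{CL}$ for a uniform constant $C$. The one input from the mapping class group side is the classical fact that geometric intersection numbers grow at most exponentially in the word norm: for the fixed multicurves $\partial\Sigma_0$ and $P_0$ one has $i(\partial\Sigma_0,h(P_0))=i(h^{-1}(\partial\Sigma_0),P_0)\le e^{CL}$, a fact which is in any case subsumed by Corollary~\ref{cor:bounding-cell-intersections}.

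Next I build the path from $R_0$ to $hR_0$ out of the distinguished paths of Lemmas~\ref{lemma:force-to-carry} and~\ref{lemma:inductive-distance-control}, applied with the \emph{moving} pants decomposition $P:=h(P_0)$, whose components bound disks since $h\in\Map(V)$ and those of $P_0$ do. First, Lemma~\ref{lemma:force-to-carry} produces a large rack $R_0'$ with the same support system $\Sigma_0$ which effectively carries $P$ and satisfies $d_{\mathcal{RR}(V)}(R_0,R_0')\lesssim i(P,\partial\Sigma_0)\le e^{CL}$. Second, by Lemma~\ref{lemma:surgery-sequences} (applied with $\D$ the disk system bounded by $P$) there is a disk exchange sequence starting at $\Sigma_0$, each step a directed disk exchange towards a component of $P$, which terminates after at most $i(\partial\Sigma_0,P)\le e^{CL}$ steps at a reduced disk system disjoint from $P$, hence at a reduced subsystem of $P$. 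Promoting this sequence step by step using Lemma~\ref{lemma:inductive-distance-control} (whose hypothesis that $P$ be effectively carried is reproduced by its conclusion~ii)), I obtain large racks $R_0'=\hat R_0,\hat R_1,\dots,\hat R_N=R'$, each still effectively carrying $P$, with $d_{\mathcal{RR}(V)}(\hat R_j,\hat R_{j+1})\lesssim i(P,\partial\Sigma(\hat R_j))\le i(P,\partial\Sigma_0)\le e^{CL}$ by conclusion~iii). Since $N\le e^{CL}$ as well, this yields $d_{\mathcal{RR}(V)}(R_0,R')\lesssim e^{2CL}$, where $R'$ is a large rack whose support system is a reduced subsystem of $P=h(P_0)$ and which effectively carries $P$.

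It remains to connect $R'$ to $hR_0$ at bounded distance. Let $\mathcal{F}$ be the set of rigid isotopy classes of large racks whose support system is a reduced subsystem of $P_0$ and which effectively carry $P_0$. This set is finite: there are finitely many reduced subsystems of $P_0$, and for a fixed such support system the condition of effectively carrying $P_0$ bounds the number of intersections of each rope with the fixed cell structure of $P_0$, which leaves only finitely many possibilities for the ropes up to rigid isotopy (including the amount of twisting about the support curves). By construction $R_0\in\mathcal{F}$, and $h^{-1}(R')\in\mathcal{F}$ since its support system is $h^{-1}$ of a reduced subsystem of $h(P_0)$, hence a reduced subsystem of $P_0$, and it effectively carries $h^{-1}(P)=P_0$. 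Because $\Map(V)$ acts on $\mathcal{RR}(V)$ by isometries,
$$d_{\mathcal{RR}(V)}(R',hR_0)=d_{\mathcal{RR}(V)}\!\left(h^{-1}(R'),R_0\right)\le \operatorname{diam}_{\mathcal{RR}(V)}(\mathcal{F})=:D_0,$$
and $D_0$ depends only on $V$. Combining, $d_{\mathcal{RR}(V)}(R_0,hR_0)\lesssim e^{2CL}+D_0\lesssim e^{C'L}$, and the quasi-isometry $\Map(V)\to\mathcal{RR}(V)$ gives $\|h\|_{\Map(V)}\lesssim e^{C'\|h\|_{\Map(\partial V)}}$. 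Together with the lower bounds from Sections~\ref{sec:marked-points} and~\ref{sec:closed-surfaces}, this completes the proof of exponential distortion.

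The main obstacle --- beyond correctly chaining the already proven Lemmas~\ref{lemma:force-to-carry}--\ref{lemma:inductive-distance-control} --- is the bookkeeping that keeps every auxiliary intersection number, and hence every distance estimate, below a single exponential of $L$. The crucial organizational point is to run the distinguished paths towards the \emph{moving} pants decomposition $h(P_0)$ rather than a fixed one, so that after translating by $h^{-1}$ both endpoints of the bounded last leg lie in the fixed finite set $\mathcal{F}$; without this the last leg cannot obviously be bounded. The remaining verifications --- that $\mathcal{F}$ is finite, and that ``$N\le e^{CL}$ steps, each of size $\lesssim e^{CL}$'' really follows from Lemma~\ref{lemma:surgery-sequences} together with conclusion~iii) of Lemma~\ref{lemma:inductive-distance-control} --- are then routine.
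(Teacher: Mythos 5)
Your construction of the path $R_0\to R'$ follows the paper's proof almost verbatim (same model $\mathcal{RR}(V)$, same use of Lemmas~\ref{lemma:force-to-carry}, \ref{lemma:surgery-sequences} and~\ref{lemma:inductive-distance-control}, same $e^{CL}\cdot e^{CL}$ bookkeeping), but your last leg contains a genuine gap: the set $\mathcal{F}$ is \emph{not} finite, and it does not even have finite diameter in $\mathcal{RR}(V)$. The conditions defining $\mathcal{F}$ --- support system a reduced subsystem of $P_0$, and $P_0$ effectively carried --- are phrased entirely in terms of intersection numbers with $P_0$ (when the support is disjoint from $P_0$, condition \textit{ii)} of Lemma~\ref{lemma:force-to-carry} is vacuous and \textit{iii)} only bounds $i(P_0,\cdot)$). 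Hence the infinite group of Dehn multitwists about $P_0$, which lies in $\Map(V)$ and fixes each component of $P_0$, preserves $\mathcal{F}$: for a multitwist $\varphi$ one has $i\bigl(\beta,\varphi(C)\bigr)=i\bigl(\varphi^{-1}\beta,C\bigr)=i(\beta,C)$ for every component $\beta$ of $P_0$. Since by Corollary~\ref{cor:marked-finite-quotient} the stabilizer in $\Map(V)$ of a rigid isotopy class is finite, the orbit of $R_0$ under this multitwist group inside $\mathcal{F}$ is infinite and unbounded in $\mathcal{RR}(V)$ (the action is proper). Your justification that bounding intersections ``with the fixed cell structure of $P_0$'' leaves finitely many ropes fails precisely because $P_0$ is only a pants decomposition, not a filling cell decomposition: an arc can be twisted arbitrarily often about a pants curve without changing any of its intersection numbers with $P_0$. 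Concretely, both $h^{-1}(R')$ and $R_0$ lie in $\mathcal{F}$, but they may differ by an arbitrarily large multitwist about $P_0$, so the inequality $d_{\mathcal{RR}(V)}(R',hR_0)\le D_0$ is unjustified and in general false.

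This twisting ambiguity is exactly what the paper's final step is designed to absorb, and your argument has no substitute for it. The paper converts the rack path into a handlebody group element $g$ of word norm coarsely bounded by $N^2$ with $g(P_0)=P$, observes that $f^{-1}\circ g$ fixes $P_0$ and hence is a multitwist about $P_0$, and then invokes two facts: the multitwist group about $P_0$ is contained in $\Map(V)$ (all components of $P_0$ bound disks), and it is undistorted in $\Map(\partial V)$. Since $\|f^{-1}\circ g\|_{\Map(\partial V)}$ is coarsely bounded by $N^2+L$, undistortion gives the same bound for its norm in $\Map(V)$, and hence for $\|f\|_{\Map(V)}$. To repair your write-up you must either reinstate this step, or otherwise produce an explicit bound on the relative twisting about the components of $P_0$ between $h^{-1}(R')$ and $R_0$ (e.g.\ via annular subsurface projections or intersection numbers of the full induced cell decompositions, not just with $P_0$); a constant bound cannot hold as stated.
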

Together with the results from Sections~\ref{sec:marked-points}
and~\ref{sec:closed-surfaces} this theorem implies the main theorem from the
introduction.
\begin{proof}[Proof of Theorem~\ref{thm:upper-distortion-bound}]
  There is a number $K>0$ such that for every large rack $R$ there is
  a pants decomposition $P_R$ whose geometric intersection number with
  the cell decomposition $C(R)$ induced by $R$ is
  bounded by $K$. This is due to the fact that the handlebody group acts
  cocompactly on the graph of rigid racks.

  Let $R_0$ be a rack, and $P_0$ such a pants decomposition. Let $f$
  be an arbitrary element of the handlebody group. Put $P =
  f(P_0)$. By
  Proposition~\ref{prop:intersection-number-versus-distance} the
  geometric intersection number between $P$ and $P_0$ is coarsely
  bounded exponentially in the word norm of $f$ in the mapping class
  group. Denote this bound by $N$.

  As a first step, apply Lemma~\ref{lemma:force-to-carry} to $R_0$ and
  $P$ to construct a rack $R_1$ which effectively carries $P$ and
  whose distance to $R_0$ is coarsely bounded by $N$.
  Next, use Lemma~\ref{lemma:inductive-distance-control} to construct
  a rack $R_2$ whose distance to $R_1$ is again coarsely bounded by
  $N$, and such that the number of intersections between $P$ and
  $\Sigma(R_2)$ is strictly less than the number of intersections
  between $P$ and $\Sigma(R_1)$.
  Inductively repeating this procedure we find a sequence
  $R_1,\ldots,R_K$ of racks of length $K$ coarsely bounded by $N^2$,
  and such that $P$ is disjoint from $\Sigma(R_K)$.
  In particular, there is a handlebody group element $g$ which
  maps $P_0$ to $P$ and whose word
  norm in the handlebody group is also coarsely bounded by $N^2$. The difference $f^{-1}\circ g$ fixes the pants
  decomposition $P_0$ and hence is a Dehn multitwist about $P_0$. As
  the group of Dehn multitwists about $P_0$ is contained in the
  handlebody group, and undistorted in the mapping class group, the
  word norm of $f^{-1}\circ g$ in the handlebody group is also
  coarsely bounded by $N^2$. This shows the theorem.
\end{proof}

\appendix
\section{Markings and intersection numbers}
\label{app:markings}
In this Appendix we recall some facts about markings and intersection
numbers which are used several times in this work.

Our terminology deviates slightly from the one used in \cite{MM00}, so
we also recall the necessary definitions.
\begin{defi}
  A \emph{marking} $\mu$ of a surface $S$ is a pants
  decomposition $P$ of $S$ together with a clean transversal for each
  curve in $P$.
  Here, a \emph{clean transversal} to a pants curve $\gamma\in P$ is a
  curve $c$ which is disjoint from all curves $\gamma' \in P\setminus\gamma$
  and which intersects $\gamma$ in the minimal number of points.
\end{defi}
Two clean transversals to a curve $\alpha$ in a pants decomposition
$P$ differ by a Dehn twist about $\alpha$ (after possibly applying a
half-twist about $\alpha$). In this way, the set of clean
transversals can be thought of as a twist normalization about the
pants decomposition curves.

Note that the object we denote by ``marking'' is called ``complete
clean marking'' in the terminology of \cite{MM00}. The more general
notion of marking used in \cite{MM00} does not play any role in the
present work.

Let $S$ be a oriented surface of finite type and
negative Euler characteristic (possibly with punctures and boundary
components).  
\emph{Subsurface projections to annuli} in $S$ are defined in the
following way (compare \cite{MM00}). 
Recall that the arc complex of a closed annulus $A$ is the graph whose
vertex set is the set of arcs connecting the two boundary components
of $A$ up to isotopy fixing $\partial A$ pointwise. Two such vertices
are connected by an edge of length one, if the corresponding arcs can
be realized with disjoint interior.

Let $\alpha$ be an essential simple closed curve
on $S$. By $S_\alpha$ we denote the annular cover corresponding to
$\alpha$. Explicitly, $S_\alpha$ is a covering surface of $S$ corresponding
to the (conjugacy class of the) cyclic subgroup of $\pi_1(S)$
generated by $\alpha$. Since $S$ has negative Euler characteristic, it
carries a hyperbolic metric which lifts to a hyperbolic metric on
the annulus $S_\alpha$. In particular, $S_\alpha$ has a natural
boundary compactifying it to a closed annulus.  

Let $\beta$ be a simple closed curve or essential arc on $S$
intersecting $\alpha$. Consider the set of lifts $\widetilde{\beta}$
of $\beta$ to 
$S_\alpha$ which connect the two boundary components of
$S_\alpha$. Every element of this set defines a vertex in
the arc complex of the annulus $S_\alpha$.
We call the set of all these vertices the \emph{subsurface
  projection of $\beta$ to $\alpha$}. The subsurface projection of
$\beta$ to $\alpha$ has diameter at most one as all lifts of $\beta$
to $S_\alpha$ are disjoint.

\begin{defi}
  The \emph{marking graph} of $S$ is the graph whose vertex set is the set of
  isotopy classes of markings. Two such markings $\mu$
  and $\mu'$ are joined by an edge of length one if they differ by an
  \emph{elementary move}. An elementary move from $\mu$ to $\mu'$ is
  one of the following two operations.
  \begin{enumerate}[i)]
  \item $\mu'$ has the same underlying pants decomposition as
    $\mu$. The transversals of $\mu'$ are obtained from the ones of
    $\mu$ by applying one primitive Dehn twist about one of the pants
    curves.
  \item Replace a pants curve $\alpha$ by its corresponding
    clean transversal $\beta$ in $\mu$. Then modify $\alpha$ to a
    clean transversal of $\beta$ (``cleaning the marking'' in the
    terminology of \cite{MM00}).
  \end{enumerate}
\end{defi}
The cleaning operation is described in detail in
\cite[Lemma~2.4]{MM00} (also compare the discussion on page 21 of
\cite{MM00}). 

Since the details are not relevant for the
current work, we do not review them here.
The marking graph is a connected, locally finite graph on which the
mapping class group of $S$ acts with finite point stabilizers and
finite quotient (compare \cite{MM00}). Therefore, it is quasi-isometric
to the mapping class group.

The following proposition is well-known to experts and relates
distances in the marking graph to intersection numbers. 
Since we did not find a proof in the literature,
we include one here for completeness.
\begin{prop}\label{prop:intersection-number-versus-distance}
  Let $\mu_1,\mu_2$ be markings of a surface $S$. 
  If $\mu_1$ and $\mu_2$ are of distance $k$ in the
  marking graph, then the total number of intersections between
  $\mu_1$ and $\mu_2$ is bounded exponentially in $k$.
  Conversely, the total intersection number between $\mu_1$ and
  $\mu_2$ is a coarse upper bound for the distance between $\mu_1$ and
  $\mu_2$ in the marking graph of $S$.
\end{prop}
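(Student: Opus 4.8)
The plan is to prove the two directions separately, since they are logically independent. For the first (exponential) bound, I would argue by induction on the distance $k$. The base case $k=0$ is clear. For the inductive step, the key observation is that a single elementary move in the marking graph changes intersection numbers with any fixed marking by at most a bounded multiplicative factor. First I would make this precise for the two types of elementary moves: a twist move replaces one transversal $c$ of $\mu$ by its image under a single Dehn twist $T_\alpha$, and by the standard estimate $i(T_\alpha(c),\beta)\le i(c,\beta)+i(c,\alpha)\,i(\alpha,\beta)$ together with the fact that $i(c,\alpha)$ is uniformly bounded (it is the minimal intersection number of a transversal, hence $1$ or $2$), this changes all relevant intersection numbers by a uniformly bounded factor; similarly, a cleaning move replaces a pants curve by a transversal and vice versa, and one checks using \cite[Lemma~2.4]{MM00} that the new marking intersects anything at most a bounded factor more than the old one. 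Iterating $k$ times multiplies the initial (bounded) intersection number of $\mu_1$ with itself by a factor that is at most exponential in $k$, which is the claim. The only slightly delicate point here is bookkeeping: one must track intersections of \emph{all} components (pants curves and transversals) of the moving marking against \emph{all} components of the fixed one, and verify the multiplicative bound holds for the total.

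For the converse direction, the plan is to bound $d(\mu_1,\mu_2)$ from above by a linear function of $i(\mu_1,\mu_2)$. One clean way: after applying a mapping class we may assume $\mu_1$ is a fixed base marking, so it suffices to bound the word norm of any $\varphi\in\Map(S)$ linearly in $i(\varphi(\mu_0),\mu_0)$ for a fixed $\mu_0$. I would invoke the fact that the mapping class group acts cocompactly on the curve complex / on pants decompositions together with the Masur--Minsky distance formula, or more elementarily: pick a standard collection of curves realizing a generating set of $\Map(S)$ by Dehn twists; if $N=i(\mu_1,\mu_2)$ then $\mu_2$ lies in a ball of radius $O(N)$ around $\mu_1$ in the marking graph because one can explicitly "unwind" $\mu_2$ onto $\mu_1$ by a number of elementary moves controlled by $N$ — each elementary move can decrease a suitably chosen complexity (a weighted sum of intersection numbers with a fixed pants decomposition and its transversals) by a definite amount once the complexity is large. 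Alternatively, and perhaps most cleanly, one cites that the orbit map $\Map(S)\to\mathcal{C}(S)$ and the known relation between intersection number and curve-complex distance, combined with Masur--Minsky hierarchies, give that a marking at intersection number $N$ from $\mu_0$ is at marking-graph distance $O(N)$; but to keep the proof self-contained I would favor the direct unwinding argument.

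The main obstacle I anticipate is the converse direction: making the "unwinding" argument rigorous requires choosing the right complexity function on markings (relative to the fixed marking $\mu_1$) and showing that whenever this complexity exceeds a threshold, some elementary move strictly decreases it, with the total decrease before reaching the threshold bounded linearly in $i(\mu_1,\mu_2)$. This is essentially a quantitative form of connectivity of the marking graph, and while it is "well-known to experts," the careful verification — especially handling the interaction between twisting (transversal) data and the underlying pants decompositions, and ensuring no move increases the complexity without making progress elsewhere — is where the real work lies. The exponential direction, by contrast, is a routine induction once the per-move multiplicative bound is established.
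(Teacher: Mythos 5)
Your first (exponential) direction is fine, and it is in fact a different and more elementary route than the paper's: the paper gets the bound by placing each $\mu_i$ on a point $X_i$ in the thick part of Teichm\"uller space, using cocompactness of the action on the thick part to convert marking-graph distance into Teichm\"uller distance, and then applying Wolpert's lemma \cite{W79} plus the collar lemma, whereas you induct on $k$ using the bilinear twist inequality and \cite[Lemma~2.4]{MM00} to show each elementary move changes total intersection with a fixed marking by a bounded multiplicative factor. That induction does work (the base case $i(\mu_1,\mu_1)$ is uniformly bounded, twist moves give a factor because a clean transversal meets its pants curve in at most two points, and the cleaned transversal in a flip differs from the old pants curve by a uniformly bounded power of a twist about the new pants curve), so this half is a legitimate alternative proof.

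The converse direction, however, is where the real content of the proposition lies, and your proposal does not contain a proof of it. The citation route is too weak as stated: the ``known relation between intersection number and curve-complex distance'' is $d_{\mathcal{C}(S)}\le 2+2\log i$, which controls only the top-level curve complex, while by \cite[Theorem~6.12]{MM00} the marking-graph distance is coarsely the \emph{sum} of all subsurface and annular projection distances above a threshold; the whole difficulty is to bound that sum linearly by $i(\mu_1,\mu_2)$, and neither hierarchies per se nor the logarithmic curve-complex bound gives this. The paper supplies exactly this missing input: it builds the Teichm\"uller geodesic of the quadratic differential $q(P_1,P_2)$, locates two surfaces on it at Teichm\"uller distance coarsely $\log i(P_1,P_2)$ on which $P_1$, resp.\ $P_2$, are Bers short, and then invokes Rafi's formula \cite[Equation~(19)]{R07}, in which the annular terms enter \emph{logarithmically}, so that comparing with $\log i$ and exponentiating yields $\sum_Y[d_Y]_k+\sum_\alpha[d_\alpha]_k\prec i(\mu_1,\mu_2)$ (after first arranging that $P_1\cup P_2$ fills and that the twisting of each $\mu_i$ agrees coarsely with the other pants decomposition, at a cost of at most $i(\mu_1,\mu_2)$ elementary moves). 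Your alternative ``unwinding'' argument simply asserts the key claim --- that whenever the complexity is large some elementary move decreases it by a definite amount --- and this is not obviously true: elementary moves are very constrained (twists about the current pants curves and flips), a single flip can increase total intersection with the target, and exhibiting a complexity with guaranteed per-move progress is essentially equivalent to proving the proposition. As written, the second half of your plan names the obstacle but does not overcome it, so the proof is incomplete precisely at the step the paper resolves via Rafi's formula and the Masur--Minsky distance formula.
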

\begin{proof}
  We begin with the lower bound for the distance in the marking graph. Let
  $\mu_1$ and $\mu_2$ be two markings. For a number $\epsilon>0$, we
  say a marked Riemann surface $X$ belongs to the $\epsilon$-thick part of
  Teichm\"uller space if the length of each simple closed geodesic on
  $X$ is at least $\epsilon$. We will simply speak of the thick part,
  if the corresponding $\epsilon$ is understood from the context.
  There are points $X_i$ the $\epsilon$-thick part
  of Teichm\"uller space for $S$ such that each curve in $\mu_i$ is shorter than
  some universal constant $C$ on $X_i$. Here, $\epsilon$ is a
  universal constant depending only on the genus of the surface $S$.
  Explicitly, let $P_i$ be the
  underlying pants decomposition of the marking $\mu_i$. The pants
  decomposition $P_i$ defines Fenchel-Nielsen coordinates for the
  Teichm\"uller space of $S$. This implies that there is a marked
  Riemann surface $X_i'$ such that each curve in $P_i$ has hyperbolic
  length $1$ on $X'_i$. On a hyperbolic pair of pants all of whose
  boundary components have 
  lengths equal one the distance between any two boundary components
  is uniformly bounded. This implies that on $X'_i$ there are clean
  transversals to $P_i$ whose hyperbolic length is also uniformly
  bounded. By changing the marking on $X_i'$ by Dehn twists about
  $P_i$ we obtain the desired surfaces $X_i$.

  If the distance between
  $\mu_1$ and $\mu_2$ in the marking graph is bounded by $k$, then the
  Teichm\"uller distance between $X_1$ and $X_2$ is also coarsely bounded by $k$
  since the mapping class group acts properly and cocompactly on the
  thick part of Teichm\"uller space.
  Thus the total hyperbolic length of $\mu_2$ on $X_1$ is bounded by $e^{2k}\cdot C$
  by Wolpert's lemma (\cite[Lemma 3.1]{W79}). But each curve
  in $\mu_1$ has a collar of definite width on $X_1$ since its length is bounded
  by $C$, and therefore
  the total number of intersections of $\mu_1$ and $\mu_2$ is also
  coarsely bounded by $e^{2k}$.

  Next we show the upper bound for the distance in the marking graph.
  In the proof we will use singular
  Euclidean structures as in \cite{B06} and the relation between the
  mapping class group of a surface and the corresponding Teichm\"uller space.

  Let $P_1$ and $P_2$ be the underlying pants decompositions of the markings
  $\mu_1,\mu_2$. We may assume that $P_1 \union P_2$ fills the
  surface, i.e. that all components of $S\setminus (P_1\union P_2)$
  are simply connected. Namely, if $P_1 \union P_2$ does not
  fill, then $P_1$ and $P_2$ share a common curve $\alpha$. 
  We can then change the transversal to $\alpha$ in $\mu_1$ such that
  the diameter of the subsurface projection to $\alpha$ of the
  transversals to $\alpha$ in $\mu_1$ and $\mu_2$ is at most one.
  The number of steps necessary for
  this modification is bounded by the intersection number between the two
  transversals.
  We can then pass to the subsurface obtained by cutting $S$ along the common
  curve $\alpha$ and discarding the corresponding transversal. Repeat
  this procedure until $P_1\union P_2$ fills. 

  Furthermore, we can assume that the twist about a pants curve
  $\delta\in P_1$ defined by $\mu_1$ coarsely agrees with the one defined by
  $P_2$. By this we mean the following. Since $P_1$ and $P_2$ fill the
  surface, there is at least one curve of $P_2$ which
  intersects $\delta$. Denote by $c_\delta$ the transversal to
  $\delta$ in $\mu_1$. 
  The diameter of the subsurface projection of $P_2$ and $c_\delta$ to
  $\delta$ is bounded from above by the intersection number between 
  $\mu_1$ and $\mu_2$. Hence, after modifying the transversal to
  $\delta$ in $\mu_1$ by at most $i(\mu_1,\mu_2)$ 
  Dehn twists about $\delta$, the diameter of the projection is at 
  most $3$. 
  Similarly, we modify $\mu_2$ such that the twist about the pants 
  curves in $P_2$ given by $\mu_2$ agrees with the one defined by $P_1$.

  For a pair of measured laminations $\lambda_1,\lambda_2$ which
  jointly fill the surface and satisfy
  $i(\lambda_1,\lambda_2)=1$ we denote by $q(\lambda_1,\lambda_2)$ the
  quadratic differential whose horizontal measured lamination is $\lambda_1$
  and whose vertical measured lamination is $\lambda_2$.
  Now let $\rho$ be the Teichm\"uller geodesic defined by $P_1$ and
  $P_2$; that is $\rho_t=q(e^{-t} P_1, e^{t}/i(P_1,P_2) P_2)$
  (compare the construction in \cite{B06} for pairs of curves).
  Recall that on every hyperbolic surface of genus $g$ there is a
  pants decomposition such that the hyperbolic length of each pants
  curve is bounded by a universal constant $B$ (the Bers constant)
  which depends only on the genus. By the collar lemma, a curve whose
  hyperbolic length is bounded by $B$ has extremal length coarsely bounded by $B$.
  Thus the length of such a curve in any singular Euclidean 
  metric in the same conformal class is bounded by a universal
  constant $B'$.

  We set $T = \log(2B')$. Then for the singular Euclidean
  metric defined by $\rho_{-T}$, a curve
  whose length is smaller than $B'$ cannot intersect $P_1$. Hence,
  $P_1$ is the only Bers short pants decomposition for
  $\rho_{-T}$. Similarly, $P_2$ is the only Bers short pants
  decomposition on $\rho_{\log(i(P_1,P_2))+T}$.
  In particular, there are two points $X_1, X_2$ in Teichm\"uller
  space, whose Teichm\"uller distance is bounded by $2T +
  \log(i(P_1,P_2))$ and such that $P_i$ is Bers short on $X_i$.

  Now for any $k$ which is sufficiently large, by work of Rafi we have
  the following estimate for the Teichm\"uller distance
  $d_\teich(X_1,X_2)$ (compare \cite[Equation~(19)]{R07}).
  $$d_\teich(X_1, X_2) \succ \sum_Y\left[d_Y(\mu'_1,\mu'_2)\right]_k +
  \sum_{\alpha \notin \Gamma}\log\left[d_\alpha(\mu'_1,\mu'_2\right]_k.$$
  Here, $\mu'_1$ and $\mu'_2$ are shortest markings on $X_1$ and $X_2$,
  respectively, and $\left[x\right]_k$ is a cutoff function which is
  $0$ if $x\leq k$ and $x$ otherwise. The expression $a \succ b$ means 
  that $a$ is coarsely bounded by $b$.
  The first sum is taken over all subsurfaces $Y\subset S$, while the
  indexing set
  $\Gamma$ of the second sum is the set of (isotopy classes of) simple
  closed curves which 
  are short on either $X_1$ or $X_2$. Note that in our case $\Gamma$
  agrees with the union of the pants curves in $P_1$ and $P_2$.
  In both cases $d_Y$ (or
  $d_\alpha$) denotes the diameter of the set of subsurface
  projections of $\mu'_1$ and $\mu'_2$ to $Y$ (or $\alpha$).

  In our case, since $P_1$ and $P_2$ fill, we can replace the
  subsurface projections of $\mu'_i$ by those of $P_i$, except maybe in
  the cases where the subsurface is bounded by curves
  contained in $\Gamma$. Hence we get
  $$d_\teich(X_1, X_2) \succ \sum_{\partial Y \not\subset \Gamma}\left[d_Y(P_1,P_2)\right]_k +
  \sum_{\alpha \notin \Gamma}\log\left[d_\alpha(P_1,P_2)\right]_k.$$
  Now, since $d_\teich(X_1,X_2)\prec \log(i(P_1,P_2))$ we have
  $$i(P_1,P_2) \succ \sum_{\partial Y \not\subset \Gamma}\left[d_Y(P_1,P_2)\right]_k +
  \sum_{\alpha \notin \Gamma}\left[d_\alpha(P_1,P_2)\right]_k.$$
  Since the number of subsurfaces whose boundary is completely
  contained in $\Gamma$ is uniformly bounded, and the total
  intersection of $\mu_1$ and $\mu_2$ bounds each of these projections,
  we get
  $$i(\mu_1,\mu_2) \succ \sum_{Y}\left[d_Y(\mu_1,\mu_2)\right]_k +
  \sum_{\alpha}\left[d_\alpha(\mu_1,\mu_2)\right]_k.$$
  where now the sums are taken over all subsurfaces and all curves
  respectively. By \cite[Theorem~6.12]{MM00}, the right hand side
  of this inequality is coarsely equal to the distance of $\mu_1$ and
  $\mu_2$ in the marking graph. This shows the first claim.
\end{proof}

In the proof of the upper bound on distortion of the handlebody group
the following corollary is used in an essential way.
\begin{cor}\label{cor:bounding-cell-intersections}
  Let $N>0$ be given. Let $C$ be a cell decomposition of the surface
  $S$ with at most $N$ cells. Let $f \in \Map(S)$ be arbitrary. The
  intersection number between $C$ and $f(C)$ is coarsely bounded by
  an exponential of the word norm of $f$. Here, the constants depend on the genus of $S$
  and the number $N$.

  Similarly, let $C$ and $C'$ are cell decomposition with at most $N$
  cells and which intersect in $K$ points. Then there is a mapping
  class $g$ whose word norm is bounded coarsely in $K$, and such that
  $g(C)$ and $C'$ intersect in uniformly few points.
\end{cor}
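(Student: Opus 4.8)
The plan is to reduce both statements to Proposition~\ref{prop:intersection-number-versus-distance} by passing from cell decompositions to markings. First I would fix, once and for all, a finite collection of model cell decompositions: since the mapping class group acts on the set of isotopy classes of cell decompositions with at most $N$ cells with only finitely many orbits (the number of combinatorial types is bounded in terms of the genus and $N$), it suffices to prove the statement for $C$ in a fixed finite list, absorbing the finitely many comparison constants into the coarse error. For such a fixed $C$, I would choose once and for all a marking $\mu_C$ of $S$ whose total intersection number with $C$ is bounded by a constant depending only on the genus and $N$; conversely the $1$-skeleton of $C$ can be completed to (or compared with) a marking, so that $i(C, C')$ and $i(\mu_C, \mu_{C'})$ are coarsely equivalent for any two cell decompositions $C, C'$ with at most $N$ cells. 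The point is that intersection number of multicurves-with-transversals and intersection number of the graphs underlying cell decompositions differ only by bounded combinatorial bookkeeping once the number of cells is controlled.

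For the first claim, given $f\in\Map(S)$, note $i(C, f(C))$ is coarsely equal to $i(\mu_C, f(\mu_C)) = i(\mu_C, f(\mu_{f^{-1}(C)}))$, and the latter is at most a constant times $i(\mu_C, f(\mu_C))$ plus a constant; since $f(\mu_C)$ and $\mu_C$ are at distance at most $\|f\| + O(1)$ in the marking graph (the marking graph is quasi-isometric to $\Map(S)$, and moving $\mu_C$ by a generator moves it a bounded amount), the first half of Proposition~\ref{prop:intersection-number-versus-distance} gives that $i(\mu_C, f(\mu_C))$ is bounded exponentially in $\|f\|$. This yields the exponential upper bound on $i(C, f(C))$, with constants depending only on the genus and $N$.

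For the second claim, suppose $C$ and $C'$ intersect in $K$ points. Then the associated markings $\mu_C$ and $\mu_{C'}$ intersect in $O(K)$ points, so by the second (converse) half of Proposition~\ref{prop:intersection-number-versus-distance} they are at distance coarsely bounded by $K$ in the marking graph; since the marking graph is quasi-isometric to $\Map(S)$, there is $g\in\Map(S)$ with $\|g\|$ coarsely bounded by $K$ and $d(g(\mu_C), \mu_{C'}) \leq O(1)$. Markings at bounded distance have bounded intersection number, hence $g(\mu_C)$ and $\mu_{C'}$ — and therefore $g(C)$ and $C'$ — intersect in uniformly few points, completing the proof. The main technical point to get right is the two-sided comparison between $i(C, C')$ and $i(\mu_C, \mu_{C'})$ uniformly over all cell decompositions with at most $N$ cells; I expect this to be the only genuinely new (though routine) bookkeeping, with everything else being a direct appeal to Proposition~\ref{prop:intersection-number-versus-distance} and the Švarc–Milnor quasi-isometry between the marking graph and the mapping class group.
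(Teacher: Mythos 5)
Your proposal is correct and follows essentially the same route as the paper: choose, using the finiteness of mapping class group orbits of cell decompositions with at most $N$ cells, a marking $\mu_C$ with uniformly bounded intersection with $C$, transfer intersection numbers between cell decompositions and their associated markings up to bounded error, and then apply both directions of Proposition~\ref{prop:intersection-number-versus-distance} together with the quasi-isometry between the marking graph and $\Map(S)$. The only difference is presentational (your explicit finite list of models and the bookkeeping remark), not mathematical.
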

\begin{proof}
  Note that up to the action of the mapping class group there are only
  finitely many cell decompositions $C$ of $S$ with at most $N$
  cells. Hence, there is a constant $K>0$ such that for any such cell
  decomposition $C$ there is a marking $\mu_C$ whose intersection
  number with $C$ is bounded by $K$.

  By the preceding
  Proposition~\ref{prop:intersection-number-versus-distance} the
  number of intersections between $\mu_C$ and $f(\mu_C)$ is coarsely
  bounded exponentially in the word norm of $f$.
  Since the intersection number between $f(\mu_C)$ and $f(C)$ is
  uniformly bounded, the corollary follows.

  Similarly, if $C$ and $C'$ intersect in $K$ points, then the
  intersection number between $\mu_C$ and $\mu_{C'}$ can be coarsely
  bounded by $K$. Hence,
  Proposition~\ref{prop:intersection-number-versus-distance} implies
  the second claim of the corollary.
\end{proof}

\end{document}